\newtheorem{theorem}{Theorem}[section]
\newtheorem{proposition}[theorem]{Proposition}
\newtheorem{lemma}[theorem]{Lemma}
\newtheorem{corollary}[theorem]{Corollary}
\newtheorem{definition}[theorem]{Definition}
\newtheorem{hypothesis}[theorem]{Hypothesis}
\theoremstyle{remark}
\newtheorem{remark}[theorem]{Remark}
\numberwithin{equation}{section}
\newcommand{\R}{{\mathbb R}}
\newcommand{\C}{{\mathbb C}}
\newcommand{\bbC}{{\mathbb{C}}}
\newcommand{\bbN}{{\mathbb{N}}}
\newcommand{\bbR}{{\mathbb{R}}}
\newcommand{\bbZ}{{\mathbb{Z}}}
\newcommand{\bsA}{{\boldsymbol{A}}}
\newcommand{\bsB}{{\boldsymbol{B}}}
\newcommand{\bsC}{{\boldsymbol{C}}}
\newcommand{\bsD}{{\boldsymbol{D}}}
\newcommand{\bsH}{{\boldsymbol{H}}}
\newcommand{\bsI}{{\boldsymbol{I}}}
\newcommand{\bsL}{{\boldsymbol{L}}}
\newcommand{\bsP}{{\boldsymbol{P}}}
\newcommand{\bsT}{{\boldsymbol{T}}}
\newcommand{\cB}{{\mathcal B}}
\newcommand{\cD}{{\mathcal D}}
\newcommand{\cH}{{\mathcal H}}
\newcommand{\cL}{{\mathcal L}}
\newcommand{\Lxi}{\xi_L}
\newcommand{\Lf}{f_L}
\DeclareMathOperator{\const}{const}
\DeclareMathOperator{\iindex}{index}
\DeclareMathOperator{\ran}{ran}
\DeclareMathOperator{\dom}{dom}
\DeclareMathOperator{\tr}{tr}
\DeclareMathOperator{\erf}{erf}
\DeclareMathOperator{\sfl}{sf}
\DeclareMathOperator*{\slim}{s-lim}
\renewcommand{\Im}{\text{\rm Im}}
\newcommand{\loc}{\operatorname{loc}}
\newcommand{\ind}{\operatorname{index}}
\newcommand{\no}{\notag}
\newcommand{\lb}{\label}
\newcommand{\f}{\frac}
\begin{document}

\title[Witten index and spectral shift function]
{The Witten index and the spectral shift function}

\author[A.\ Carey]{Alan Carey}  
\address{Mathematical Sciences Institute, Australian National University, 
Kingsley St., Canberra, ACT 0200, Australia
and School of Mathematics and Applied Statistics, University of Wollongong, NSW, Australia,  2522}  
\email{alan.carey@anu.edu.au}  
%

\author[G.\ Levitina]{Galina Levitina} 
\address{Mathematical Sciences Institute, Australian National University, 
Kingsley St., Canberra, ACT 0200, Australia}  
\email{galina.levitina@anu.edu.au}  

\author[D.\ Potapov]{Denis Potapov} 
\address{School of Mathematics and Statistics, UNSW, Kensington, NSW 2052,
Australia} 
\email{d.potapov@unsw.edu.au}

\author[F.\ Sukochev]{Fedor Sukochev}
\address{School of Mathematics and Statistics, UNSW, Kensington, NSW 2052,
Australia} 
\email{f.sukochev@unsw.edu.au}

\begin{abstract}
In \cite{APSIII} Atiyah, Patodi and Singer introduced spectral flow for elliptic operators on odd dimensional compact manifolds.
They argued that it could be computed from the Fredholm index of an elliptic operator on a manifold of one higher dimension.
A general proof of this fact was produced by Robbin-Salamon \cite{RS95}. 
In \cite{GLMST}, a start was made on extending these ideas to operators with some essential spectrum as occurs on non-compact manifolds.
The new ingredient introduced there was to exploit scattering theory following the fundamental paper \cite{Pu08}.
These results do not apply to differential operators directly, only to pseudo-differential operators on manifolds,
due to the restrictive assumption that spectral flow is considered between an operator and {its perturbation by a relatively trace-class operator}. 
In this paper  we extend the main results of these earlier papers to spectral flow between an operator and a perturbation satisfying a higher $p^{th}$ Schatten class condition for $0\leq p<\infty$,
thus allowing differential operators on manifolds of any dimension $d<p+1$. In fact our main result does not assume any ellipticity or Fredholm properties
at all and proves an operator theoretic trace formula motivated by \cite{BCPRSW, CGK16}. We illustrate our results 
using Dirac type operators on $L^2(\bbR^d)$ for arbitrary $d\in\bbN$ (see Section \ref{ch_examples}). In this setting Theorem~\ref{thm_principla trace formula} substantially extends \cite[Theorem 3.5]{CGGLPSZ16}, where the case $d=1$ was treated.

\end{abstract}

\maketitle

%
 
\section{Introduction} 
\subsection{Motivation and background}

Suppose that $\{A(t)\}_{t\in\bbR}$ is a family of (possibly unbounded) self-adjoint operators in a separable Hilbert space $\cH$ and consider the operator 
$$\bsD_\bsA=\frac{d}{dt}+A(t),$$
in the Hilbert space $L_2(\bbR,\cH)$.
Operators of this form were studied by  Atiyah, Patodi and  Singer \cite{APS75I}, \cite{APS75II}, \cite{APSIII} with $A(t),$ \, $t\in\bbR,$ a first order elliptic differential operator on a compact odd-dimensional manifold with the asymptotes (in a suitable topology) 
$A_{\pm}=\lim_{t\to\pm\infty}A(t)$
boundedly invertible and purely discrete spectra of $A_\pm, A(t), t\in\bbR$. The assumption that $A_\pm$ are boundedly invertible guarantees that the operator $\bsD_\bsA$ is Fredholm, and therefore the Fredholm index, $\ind(\bsD_\bsA)$, of the operator $\bsD_\bsA$ is well-defined. Atiyah, Patodi and  Singer (APS) showed that $\ind(\bsD_\bsA)$ is equal to the spectral flow $\sfl\{A(t)\}_{t=-\infty}^\infty$ of the path $\{A(t)\}_{t\in\bbR}$. The spectral flow in APS is intuitively understood as the number of  eigenvalues
(counting multiplicities) of $A(t)$ that go from negative to positive minus the number going from positive to negative as $t$ runs from $-\infty$ to $+\infty$. 

Analogous theorems were later studied by other  methods (see for example \cite{BBWo93}) culminating in a definitive treatment for certain self-adjoint differential operators with compact resolvent in a paper of Robbin--Salamon, \cite{RS95}. Their paper has implications for  many applications including those to Morse theory, Floer
homology, Morse and Maslov indices, Cauchy-Riemann operators, and all the way to oscillation
theory. 
%

%
With the exception of \cite{BCPRSW}, the focus in mathematics on the equality of the index of the operator $\bsD_\bsA$ and the spectral flow has been mainly on geometrically defined operators associated to compact manifolds.
Physicists, however, are interested in the case of non-compact manifolds and non-geometric examples. The stumbling block for index formulas in that situation
is the presence of essential spectrum. Motivated by ideas from scattering theory  an approach to a Robbin-Salamon type result for paths of self-adjoint Fredholm operators with
some essential spectrum
was initiated in \cite{Pu08} and  \cite{GLMST}. However, the key assumption in \cite{GLMST} is that  spectral flow 
is only considered between self-adjoint operators that differ by a so-called relatively trace class perturbation.  This latter assumption is satisfied by certain pseudo differential perturbations
of a fixed differential operator
but does not apply to paths of differential operators even in one dimension. As a result, the promising start made in \cite{GLMST}
to generalising the Robbin-Salamon theorem (so that it applied to operators with some essential spectrum as occurs in the non-compact manifold case), ran into difficulties.

%
%
%
Relevant to the issue of allowing essential spectrum is that some time ago
 Witten \cite{Witten}  gave a proposal for extending index theory beyond the Fredholm setting, particularly motivated by supersymmetric quantum theory. 
For certain Dirac type operators (with essential spectrum) Witten's ideas could be shown to produce index theorems \cite{BGGSS87}, \cite{GS88}.
Some of this early work also involves non-Fredholm operators 
and it was revisited  in \cite{CGPST_Witten} and related to Pushnitski's work under the same relative trace class assumption as in \cite{GLMST}.  
This led some of the present authors in \cite{CGLS16} to begin to complete the program started in \cite{GLMST}.
The new ingredient in \cite{CGLS16} is an approximation technique
that enabled the main theorems in \cite{GLMST} to be extended.
 It is this approximation technique that underpins the advances described in this paper.

We make four main advances. First, our results are purely operator theoretic and hence apply in non-geometrical settings. 
Second in the setting of the Robbin-Salamon theorem we show that when one drops the assumption of invertible endpoints for the path
one still obtains an index formula except that the Witten index replaces the Fredholm index.  Third, we allow essential spectrum in the case of paths of self-adjoint Fredholm 
operators.  Fourth, we show that we obtain information even in the case where the path of self-adjoint operators is not Fredholm.  It is in this final situation that 
substantial results from quantum mechanical scattering theory are essential. 

\subsection{An overview of our results}

We now briefly discuss the setting and results of the present paper.
For the purpose of an accessible introduction we consider here a simplified situation of the general setting under which we prove the results. 
%

We start with a self-adjoint unbounded operator  $A_-$ densely defined on a separable complex Hilbert space $\cH$ and suppose that $B$ is a self-adjoint bounded perturbation of $A_-$. If the perturbation $B$ is a relative trace-class perturbation of $A_-$, that is, $B(A_-+i)^{-1}$ is a trace-class operator, then the main assumption in \cite{GLMST}, \cite{CGPST_Witten} is satisfied. Hence, the results summarised in the previous subsection are known to be true. However, the critical fact for partial differential operators  in general  is that perturbations by lower order operators
satisfy relative Schatten--von Neumann class constraints but not relative trace class constraints.
To describe this, suppose for example that we consider $A_-$ to be 
the flat space Dirac operator acting in $L^2(\bbR^n) \otimes \bbC^m$ and the perturbation $B$ to be given by the multiplication operator by a smooth, $m \times m$ matrix-valued bounded function 
$F:\bbR^n\to M^{m \times m}(L^{\infty}(\bbR) \cap C^{\infty}(\bbR))$, $m \in \bbN$.
Under suitable decay conditions at infinity for $F$, the product $B(1+A_-^2)^{-s/2}$ is trace class for $s>n$ and for no smaller value of $s$ (see \cite[Remark~4.3]{Simon_book}). Therefore, the case where we may set $s=1$ and obtain a relative trace class perturbation  requires we work in dimension zero. The only way that we know of to obtain
a relative trace class perturbation in $n$-dimensions is to replace $F$ by certain pseudo-differential operators.  It eventuates, as we explain in later sections,
that an appropriate choice of pseudo-differential perturbations can be used to approximate the kind of perturbation of Dirac operators that arise  in examples.
In this way the results of \cite{GLMST} and \cite{CGPST_Witten} can be brought to bear on the higher dimensional situation.

Thus, if we want to include multidimensional examples in our setting the assumption on the perturbation $B$ should be that the operator $B(A_-+i)^{-p-1}$ is a trace-class operator for some fixed $p\in\bbN\cup\{0\}$. This assumption includes, in particular, the assumptions of \cite{Pu08} and \cite{GLMST}, \cite{CGPST_Witten}. However, what is more important, in contrast to these three papers,  our  assumption is satisfied by 
Dirac type operators on certain non-compact manifolds (see \cite{CGRS_memoirs}). See Section \ref{ch_examples} below for the example of the Dirac operator on $\bbR^d, d\in\bbN$. 

To discuss the connection to index theory, we introduce now the `suspension' operator $\bsD_\bsA$ as in \cite{APSIII}, \cite{RS95}.
Let $\theta:\mathbb R\to \mathbb R$ be a smooth function (with integrable positive derivative) interpolating between zero and one
in the sense that $\lim_{t\to-\infty}\theta(t)=0$ and $\lim_{t\to\infty}\theta(t)  =1$.
Denoting by $M_\theta$ the operator on $L^2(\bbR)$ of multiplication by $\theta$, introduce the operator $\bsD_\bsA^{}$ in the Hilbert space $L^2(\bbR)\otimes \cH$ by 
\begin{equation*}
\bsD_\bsA^{} =\f{d}{dt}\otimes 1+ 1\otimes A_- +M_\theta\otimes B.
\end{equation*} 
Here the operator $d/dt$ in $L^2(\bbR)$  is the differentiation operator with domain being the Sobolev space $W^{1,2} \big(\bbR)$,
so that $\bsD_\bsA^{}$ is defined on $W^{1,2} \big(\bbR)\otimes \dom(A_-)$. For ease of notation we will usually identify 
$L^2(\bbR)\otimes \cH$ and $L^2(\bbR;\cH)$.

In order to relate the index of the operator $\bsD_\bsA$ with the spectral flow (when defined) of the family $\{A_-+\theta(t)B\}_{t\in\bbR}$ we establish the first main result of this current paper, the {\it principal trace formula}. Namely, under some additional mild assumptions for any $t>0$ we prove the following relation (see Theorem \ref{thm_principla trace formula} below):
\begin{align}\label{ch_principla trace formula_intro_formula}
\tr\Big(e^{-t\bsD_{\bsA}^{} \bsD_{\bsA}^{*}}-e^{-t\bsD_{\bsA}^{*} \bsD_{\bsA}^{}}\Big)=-\Big(\frac{t}{\pi}\Big)^{1/2}\int_0^1\tr\Big(e^{-tA_s^2}B\Big)ds,\\
 \quad A_s=A_-+sB, \quad s\in[0,1],\nonumber
\end{align}
noting that our hypotheses guarantee both sides of the relation are well-defined. Here, $\tr$ denotes the classical trace on the algebra $\cB(\cH)$ of all bounded linear operator on a Hilbert space $\cH$.

The key point to note is that this is an operator identity that makes no assumptions on the spectrum of the operators on either
side of the relation. We investigate one consequence of this fact using scattering theory methods and involving the so-called Witten index. 
There are other applications and examples that require considerable preliminary details to explain and we defer those to a later date.

Note that if we impose the assumption that the operators $A_-$ and $A_+:=A_-+B$ have discrete spectrum and are unitarily equivalent and invertible then the operators $A_\pm$ and $\bsD_{\bsA}$ are Fredholm and 
in that case the left-hand side of \eqref{ch_principla trace formula_intro_formula} is the Fredholm index of  $\bsD_{\bsA}$ \cite{GS88} while the right hand side is the spectral flow along the path $\{A_-+\theta(t)B\}_{t\in\bbR}$ \cite{ACS, CP2} (note that we define spectral flow analytically in this paper as in \cite{CP2} rather than use the APS point of view) . Thus the principal
trace formula entails a version of the Robbin-Salamon theorem.  

However, if we assume that the endpoints $A_\pm$ are not invertible, the principal trace formula remains true, but the left-hand side is no longer the Fredholm index
since the operator $\bsD_\bsA^{}$ is no longer Fredholm. However the right-hand side will still be spectral flow if $A_\pm$ have discrete spectrum.
As the right-hand side is independent of $t$ in this case \cite{CP2} so too is the left-hand side and as discussed below, it is the Witten index of $\bsD_\bsA^{}$.

If the operator $A_-$ has some essential spectrum then neither the left hand side nor the right hand side of the principal trace formula can be proved to be independent of $t$. There are then two possible asymptotic quantities that we might consider.  The first, the limit as $t\to 0$ on the left-hand side of the principal trace formula,
 gives what has been referred to in the physics literature as the anomaly.  In the context of the Atiyah-Singer index theorem it gives the local form of the 
theorem involving integrals of characteristic classes. There is evidence \cite{CGK15} that for Dirac type operators with some essential spectrum the anomaly
can be expressed in terms of a local formula of the same type as arises in the Atiyah-Singer theorem. 

Our main interest in the present paper lies in the second limit as $t\to\infty$  of \eqref{ch_principla trace formula_intro_formula}. On the left hand side this limit, if it exists, has been termed the Witten index \cite{GS88}. 
The study of the limit $t\to\infty$ on the right-hand side of the principal trace formula is made possible by exploiting the Krein spectral shift function from quantum mechanical scattering theory. As an application of the principal trace formula, we obtain 
a generalisation of the original formulas of Pushnitski
relating the spectral shift function for the pair $(A_+, A_-)$ with that for the pair $(\bsD_\bsA^{\ast}\bsD_\bsA^{},\bsD_\bsA^{}\bsD_\bsA^{\ast})$, as well as formula of Witten index of $\bsD_\bsA$ in terms of spectral shift function for the pair  $(A_+, A_-)$ (see Theorem \ref{thm_WI=ssf}).

\subsection{Summary of the exposition}

In Section \ref{ch_prelim} we collect some preliminaries.  
In Section \ref{ch_approximation} we give a detailed exposition of the setting and explain the approximation scheme that we use in order to apply the results of \cite{Pu08,GLMST}.
The idea is to use a `cut-off' that, in the case of differential operators,  produces a sequence of pseudo-differential approximants.
This is the technical heart of the paper.
The point is that for the approximants the principal trace formula is quickly established. Then in Section \ref{ch_principla trace formula} we have to handle the convergence of
both sides as we remove our cut-off thus proving the principal trace formula. 
Finally, in  Section \ref{ch_WI} we firstly  introduce the spectral shift function adapted to our setting. There is a subtle point that we need to be careful about in that for the pair
$A_\pm$ the spectral shift function is a priori only defined up to a constant.  Fixing this constant is essential but is not a trivial matter. With these preliminaries
out of the way we move to our analogue of Pushnitski's formula. Then we discuss the Witten index.


In the final Section \ref{ch_examples} we show that the our hypothesis allows consideration of  Dirac operators in arbitrary dimensions.  

{

\subsection*{Acknowledgements}  
A.C., D.P., and F.S.\ thank the Erwin Schr\"odinger International 
Institute for Mathematical Physics (ESI), Vienna, Austria, for funding support 
for this collaboration in the form of a Research-in Teams project, `Scattering Theory 
and Non-Commutative Analysis' from June 22 to July 22, 2014. 
G.L.\ is indebted to Gerald Teschl for a kind invitation to visit the 
University of Vienna, Austria, for a period of two weeks in June/July of 2014. A.C., G.L., F.S.
thank BIRS for funding a focussed research group on the topic of this paper in June 2017.
A.C., G.L., and F.S.\ gratefully acknowledge financial support from the Australian Research 
Council.  A.C. thanks the Humboldt Stiftung for support. G.L. acknowledges the support of Australian government
scholarships.
We thank Fritz Gesztesy, Jens Kaad, Harald Grosse and Dmitriy Zanin for many conversations on the subject matter of this
article.
}

\section{Preliminaries}\label{ch_prelim}

In the present section we introduce the class of so-called $p$-relative trace class perturbations of a given self-adjoint operator and collect the preliminary properties of this class of perturbations. This class of perturbations constitutes the essential part of the main hypothesis for our results (see Hypothesis \ref{hyp_final}).

In what follows $\cH$ denotes a separable complex Hilbert space. The algebra of all linear bounded operators on $\cH$ is denoted by $\cB(\cH)$ and is equipped with the uniform norm $\|\cdot\|$. 
%
%
The corresponding $\ell^p$-based Schatten--von Neumann ideals on $\cH$ are denoted by $\cL_p (\cH)$, with associated norm abbreviated by 
$\|\cdot\|_p$, $p \geq 1$. Moreover, 
${\tr(A)}$ denotes 
the trace of a trace class operator $A\in\cL_1(\cH)$. 

\begin{definition}Let $A_0$ be a self-adjoint (possibly unbounded) operator on  $\cH$ and let $p\in \bbN\cup\{0\}$. A bounded self-adjoint operator $B\in\cB(\cH)$ is called a $p$-relative trace class perturbation (with respect to $A_0$) if 
\begin{equation}\label{def_p_relative_inc}
B(A_0+i)^{-p-1}\in\cL_1(\cH).
\end{equation}
In what follows, we choose the smallest $p$, such that \eqref{def_p_relative_inc} holds.
\end{definition}

It is clear that one can take the resolvent parameter for the operator $A_0$ to be any point $z\in\bbC\setminus \bbR$ for the definition of $p$-relative trace class perturbations. In addition, since $B$ is self-adjoint, inclusion \eqref{def_p_relative_inc} is equivalent to the inclusion $(A_0+i)^{-p-1}B\in\cL_1(\cH)$. Furthermore, elementary functional calculus implies that for any $C\in\cB(\cH)$ the operators 
$C(A_0+i)^{-q}$ and $C(A_0^2+1)^{-q/2}$
belong to the same ideal of $\cB(\cH)$. In what follows we use this fact repeatedly without additional explanations.
%

Classical interpolation theory for Schatten ideals (see see e.g.  \cite[Theorem 2.9]{Simon_book}, \cite[Section III, Theorem 13.1]{GK}) implies the following simple result.
\begin{lemma}\label{lem_B_+_relative_dif_powers}
Suppose that $B$ is a $p$-relative trace-class perturbation of $A_0$. Then 
\begin{enumerate}
\item For any $j=1,\dots, p+1$, we have 
\begin{equation}\label{B_+relative_dif_powers}
B(A_0+i)^{-j}\in\mathcal{L}_{\frac{p+1}{j}}(\cH),\quad j=1,\dots, p+1,
\end{equation}
and 
\begin{equation*}
\|B(A_0+i)^{-j}\|_{\frac{p+1}{j}}\leq \|B\|^{\frac{j}{p+1}}\cdot\|B(A_0+i)^{-p+1}\|_1^{\frac{p+1-j}{p+1}}.
\end{equation*}

\item For any $j=1,\dots, p+1$ and any $k,l\in\bbN\cup\{0\}$ such that $k+l=j,$ we have 
$(A_0+i)^{-k}B(A_0+i)^{-l}\in\mathcal{L}_{\frac{p+1}{j}}(\cH).$
and 
\begin{equation*}
\|(A_0+i)^{-k}B(A_0+i)^{-l}\|_{\frac{p+1}{j}}\leq \|B(A_0+i)^{-j}\|_{\frac{p+1}{j}}.
\end{equation*}
\end{enumerate}
\end{lemma}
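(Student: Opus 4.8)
The plan is to obtain both statements from the complex interpolation method for Schatten ideals (which is exactly what the hypothesis $B(A_0+i)^{-p-1}\in\cL_1(\cH)$ is designed to feed into), after a preliminary reduction to powers of a positive operator. Put $P:=(A_0^2+1)^{-1/2}$, a bounded, positive, injective operator with dense range and $\sigma(P)\subseteq(0,1]$. The polar decomposition of the normal operator $(A_0+i)^{-1}$ is $(A_0+i)^{-1}=PU$ with $U=(A_0-i)(A_0^2+1)^{-1/2}$ unitary, and $U$ commutes with $P$ since both are functions of $A_0$; hence $(A_0+i)^{-m}=P^mU^m$ with $U^m$ unitary, so that
\[
\|C(A_0+i)^{-m}\|_r=\|CP^m\|_r\qquad\text{for all } C\in\cB(\cH),\ m\in\bbN\cup\{0\},\ r\in[1,\infty].
\]
It therefore suffices to prove the two norm inequalities with every occurrence of $(A_0+i)^{-m}$ replaced by $P^m$. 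I would also record that the complex powers $P^z:=e^{z\log P}$ are defined and contractive for $\Re z\ge 0$, are norm-analytic on $\{\Re z>0\}$, and that $P^{it}$ is unitary for real $t$ (here injectivity of $P$ is used).

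For part (1) I would introduce the analytic family $F(z):=BP^{(p+1)z}$ on the closed strip $0\le\Re z\le 1$. It is uniformly bounded there ($\|F(z)\|\le\|B\|$) and analytic on the interior; on the left edge $F(it)=BP^{i(p+1)t}$, so $\|F(it)\|_\infty\le\|B\|$, while on the right edge $F(1+it)=BP^{p+1}P^{i(p+1)t}$, so $\|F(1+it)\|_1\le\|BP^{p+1}\|_1<\infty$ by hypothesis. Feeding these two edge estimates into the interpolation theorem for Schatten ideals (Stein interpolation, as in the cited \cite[Theorem 2.9]{Simon_book}, \cite[Section III, Theorem 13.1]{GK}) at the point $\theta=j/(p+1)$ gives $F(\theta)=BP^{j}\in\cL_{1/\theta}(\cH)=\cL_{(p+1)/j}(\cH)$ together with the bound $\|BP^{j}\|_{(p+1)/j}\le\|B\|^{1-\theta}\|BP^{p+1}\|_1^{\theta}$; translating back via the first paragraph yields \eqref{B_+relative_dif_powers} and the norm estimate of part (1).

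For part (2), fix $j\in\{1,\dots,p+1\}$ and $k,l\ge 0$ with $k+l=j$, and consider $G(z):=P^{jz}BP^{j(1-z)}$ on the same strip; it is again analytic on the interior and bounded there by $\|B\|$. On the left edge, $G(it)=P^{ijt}(BP^{j})P^{-ijt}$ is a conjugate of $BP^{j}$ by the unitary $P^{ijt}$, whence $\|G(it)\|_{(p+1)/j}=\|BP^{j}\|_{(p+1)/j}$, finite by part (1); on the right edge, using that $P^{ijt}$ commutes with $P^{j}$, $G(1+it)=P^{ijt}(P^{j}B)P^{-ijt}$, so $\|G(1+it)\|_{(p+1)/j}=\|P^{j}B\|_{(p+1)/j}=\|BP^{j}\|_{(p+1)/j}$ (taking adjoints, $B$ and $P$ being self-adjoint). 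Since a single target ideal now appears on both edges, the Hadamard three-lines lemma (applied to $z\mapsto\tr(G(z)K)$ for finite-rank $K$, followed by taking the supremum over $K$ in the unit ball of the dual ideal, so that Schatten-valuedness of $G$ on the open strip need not be assumed a priori) gives $\|G(z)\|_{(p+1)/j}\le\|BP^{j}\|_{(p+1)/j}$ throughout the strip. Evaluating at $z=k/j$, where $G(k/j)=P^{k}BP^{l}$, and translating back, produces the inclusion and the constant-one bound of part (2).

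The genuinely substantive content above is just the two short interpolation arguments; the rest is bookkeeping. The one point that requires care is the admissibility of the families $F$ and $G$ when $A_0$ is unbounded: defining the complex powers of $P$ by the functional calculus, checking continuity up to the imaginary axis (where $z\mapsto P^{jz}$ is only strongly, not norm, continuous) and the mild growth condition in $\Im z$ needed to invoke the three-lines / Stein theorem (both automatic here, since $\|F(z)\|,\|G(z)\|\le\|B\|$ uniformly on the strip), together with the observation that $P^{it}$ is genuinely unitary rather than merely contractive, which rests on injectivity of $(A_0^2+1)^{-1/2}$. I expect this routine verification, not any substantive inequality, to be the only obstacle.
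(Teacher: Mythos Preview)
Your proof is correct and follows exactly the route the paper indicates: the paper gives no detailed argument and simply cites classical interpolation theory for Schatten ideals, and your Stein-interpolation/three-lines argument is precisely that. One remark: the interpolation exponents you obtain in part (1), namely $\|B\|^{(p+1-j)/(p+1)}\|B(A_0+i)^{-p-1}\|_1^{j/(p+1)}$, are the correct ones; the exponents (and the power $-p+1$) in the paper's displayed inequality appear to be typographical slips.
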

%
%

Inclusion \eqref{B_+relative_dif_powers} combined with Weyl's theorem (see e.g. \cite[Theorem 9.13]{Weidmann}) implies the stability of essential spectra: 
$\sigma_{\text{ess}}(A_0+B)=\sigma_{\text{ess}}(A_0).$

Next, we shall show that the assumption that $B$ is a $p$-relative trace class perturbation of a self-adjoint operator $A_0$, guarantees that
$(A_0+B-ai)^{-p}-(A_0-ai)^{-p}$ is a trace-class operator. In addition, we establish an approximation result for the difference $(A_0+B-ai)^{-p}-(A_0-ai)^{-p}$ necessary for the proof of the principal trace formula. 

Firstly, we recall a result which is used repeatedly throughout the paper. This result can be found in \cite{Simon_book} and \cite[Lemma 3.4]{GLMST}.
\begin{lemma}\label{so_inLp}
Let $p\in[1,\infty)$ and assume that $R,R_n,T,T_n\in\cB(\cH)$, 
$n\in\bbN$, are such that $R_n\to R$ and $T_n\to T$ in the strong operator topology, and that
$S,S_n\in\cL_p(\cH)$, $n\in\bbN$, satisfy 
$\lim_{n\to\infty}\|S_n-S\|_p=0$.
Then $\lim_{n\to\infty}\|R_n S_n T_n^\ast - R S T^\ast\|_p=0$.
\end{lemma}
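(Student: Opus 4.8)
The plan is to isolate the single mechanism that drives the statement: if $X_n\to X$ strongly with $\sup_n\|X_n\|<\infty$ and $S\in\cL_p(\cH)$, then $\|(X_n-X)S\|_p\to0$. Granting this, the lemma follows from a standard three-term splitting. First I would apply the uniform boundedness principle: since $R_n\to R$ and $T_n\to T$ strongly, the sequences $(\|R_n\|)_n$ and $(\|T_n\|)_n$ are bounded, so there is $M\ge1$ with $\|R_n\|,\|T_n\|,\|R\|,\|T\|\le M$ for every $n$. Then I would write
\[
R_nS_nT_n^\ast-RST^\ast=R_n(S_n-S)T_n^\ast+(R_n-R)\,S\,T_n^\ast+R\,S\,(T_n^\ast-T^\ast).
\]
The first summand is controlled directly by the ideal property: $\|R_n(S_n-S)T_n^\ast\|_p\le M^2\|S_n-S\|_p\to0$.

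To prove the sublemma, fix $\varepsilon>0$ and choose a finite-rank operator $F=\sum_{j=1}^m\langle\cdot,\phi_j\rangle\psi_j$ with $\|S-F\|_p<\varepsilon$, which is possible since finite-rank operators are dense in $\cL_p(\cH)$ for $p<\infty$. Splitting $(X_n-X)S=(X_n-X)(S-F)+(X_n-X)F$, the first piece has $\cL_p$-norm at most $2M\varepsilon$ uniformly in $n$, while $(X_n-X)F=\sum_{j=1}^m\langle\cdot,\phi_j\rangle\,(X_n-X)\psi_j$ has $\cL_p$-norm at most $\sum_{j=1}^m\|\phi_j\|\,\|(X_n-X)\psi_j\|$, which tends to $0$ as $n\to\infty$ because $X_n\psi_j\to X\psi_j$ for each $j$. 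Hence $\limsup_n\|(X_n-X)S\|_p\le2M\varepsilon$, and letting $\varepsilon\downarrow0$ gives the claim. Applied with $X_n=R_n$, this handles the second summand: $\|(R_n-R)S T_n^\ast\|_p\le M\|(R_n-R)S\|_p\to0$.

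The only genuine subtlety is the third summand, since strong convergence $T_n\to T$ does not force $T_n^\ast\to T^\ast$ strongly. I would sidestep this by passing to adjoints and using the isometry $\|Y\|_p=\|Y^\ast\|_p$: $\|RS(T_n^\ast-T^\ast)\|_p=\|(T_n-T)S^\ast R^\ast\|_p\le M\|(T_n-T)S^\ast\|_p$, and since $S^\ast\in\cL_p(\cH)$ and $T_n-T\to0$ strongly, the sublemma (now with $X_n=T_n$ and $S$ replaced by $S^\ast$) gives $\|(T_n-T)S^\ast\|_p\to0$. Adding the three estimates completes the proof. The main obstacle is precisely this adjoint mismatch; once it is removed via the $\cL_p$-isometry under $\ast$, everything rests on the ideal inequality together with the density of finite-rank operators in $\cL_p(\cH)$.
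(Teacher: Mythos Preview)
Your proof is correct. The paper does not actually prove this lemma; it merely cites \cite{Simon_book} and \cite[Lemma~3.4]{GLMST} for it, so there is no in-paper argument to compare against. Your approach---uniform boundedness, the three-term splitting, finite-rank approximation in $\cL_p$ for the sublemma, and handling the adjoint mismatch via the isometry $\|Y\|_p=\|Y^\ast\|_p$---is precisely the standard argument one finds in those references, and every step is sound.
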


We now introduce special spectral `cut-off'  approximants $B_n$, $n\in\bbN$, for a $p$-relative trace-class perturbation $B$, by setting 
$P_n=\chi_{[-n,n]}(A_0)$
and 
$B_{n}:=P_nBP_n.$
Throughout this section we will assume that $P_n$ and $B_n$, $n\in\bbN$,  are as in the above definition.

It follows from spectral theory that $
\slim_{n\to\infty} P_n=1,$
where $\slim$ denote convergence with respect to the strong operator topology. 
We also note that the equality \eqref{def_p_relative_inc} together with the definition of the projections $P_n$ implies that
\begin{equation}\label{B_+,n_trace_class}
B_{n}=P_nBP_n\in \cL_1(\cH).
\end{equation}

\begin{remark}\label{rem_exact_P_n_immaterial}
The precise form of the cut-offs $P_n$ is of course immaterial. We just need several facts: that $\slim_{n\to\infty}P_n=1$, $\sup_{n\in\bbN}\|P_n\|<\infty$ and that $P_nBP_n\in \cL_1(\cH)$. $\diamond$
\end{remark}

The following lemma gathers some simple properties of the approximants $A_0+B_n$, $n\in\bbN$. 

\begin{lemma}
\label{approx_A_n}With $B$, $B_n$ and $P_n$ be as above  we have that
\begin{enumerate}
\item $A_0+B_n\rightarrow A_0+B$ in the strong resolvent sense as $n\to\infty$;
\item Let $j\in\bbN$. For any $k,l\in\bbN$ such that $k+l\geq j$ and $z\in\bbC\setminus\bbR$, we have 
$$\lim_{n\rightarrow \infty}\Big\| (A_0-z)^{-k}B_{n}(A_0-z)^{-l}-(A_0-z)^{-k}B(A_0-z)^{-l}\Big\|_{\frac{p+1}{j}}=0.$$
\end{enumerate}
\end{lemma}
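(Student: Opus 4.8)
The plan is to prove both parts by combining the strong convergence $\slim_{n\to\infty}P_n = 1$ with the Schatten-norm approximation Lemma~\ref{so_inLp}, using Lemma~\ref{lem_B_+_relative_dif_powers} to place the relevant resolvent sandwiches in the correct ideals.

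\textbf{Part (i).} First I would note that strong resolvent convergence of self-adjoint operators is implied by convergence of a single resolvent in the strong operator topology, so it suffices to show $(A_0+B_n-z)^{-1}\to(A_0+B-z)^{-1}$ strongly for one (hence every) $z\in\bbC\setminus\bbR$. I would use the second resolvent identity
\begin{equation*}
(A_0+B_n-z)^{-1}-(A_0+B-z)^{-1}=(A_0+B_n-z)^{-1}(B-B_n)(A_0+B-z)^{-1}.
\end{equation*}
The factor $(A_0+B-z)^{-1}$ is fixed and bounded; the operators $(A_0+B_n-z)^{-1}$ are uniformly bounded by $|\Im z|^{-1}$ since each $A_0+B_n$ is self-adjoint; and $B-B_n = B - P_nBP_n = (1-P_n)B + P_nB(1-P_n)\to 0$ strongly because $\slim_n P_n=1$ and $P_n$ is uniformly bounded. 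Hence the product tends to $0$ strongly (using that a uniformly bounded family composed with a strongly null family is strongly null), which gives (i).

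\textbf{Part (ii).} Here I would write the difference as a single term involving $B_n-B$. Since $B_n-B = P_nBP_n - B$, expand
\begin{equation*}
(A_0-z)^{-k}(B_n-B)(A_0-z)^{-l} = (A_0-z)^{-k}\big(P_nBP_n-B\big)(A_0-z)^{-l}.
\end{equation*}
The natural move is to split $P_nBP_n - B = P_nB(P_n-1) + (P_n-1)B$ and commute the bounded spectral projections $P_n$ past resolvents of $A_0$ (which is legitimate since $P_n$ is a function of $A_0$), obtaining
\begin{equation*}
(A_0-z)^{-k}P_nB(P_n-1)(A_0-z)^{-l} + (P_n-1)(A_0-z)^{-k}B(A_0-z)^{-l}.
\end{equation*}
Now I would apply Lemma~\ref{so_inLp} with $p$ replaced by $\tfrac{p+1}{j}$: in each summand the "middle" Schatten-class factor is $(A_0-z)^{-k}B(A_0-z)^{-l}$ or a factor of it, which lies in $\cL_{(p+1)/j}(\cH)$ by Lemma~\ref{lem_B_+_relative_dif_powers}(ii) together with the remark that $(A_0-z)^{-q}$ and $(A_0+i)^{-q}$ generate the same ideal (using $k+l\ge j$, so the total resolvent power is at least $j$ and one may discard extra powers into the bounded outer factors); the outer factors are $P_n$, $P_n-1$, or identity, all uniformly bounded and converging strongly ($P_n\to 1$, $P_n-1\to 0$, $1\to 1$); and the middle factor is held fixed across $n$. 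Lemma~\ref{so_inLp} then yields convergence to $0$ in $\|\cdot\|_{(p+1)/j}$ of each summand, hence of the whole difference.

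\textbf{Main obstacle.} The only real care needed is bookkeeping the ideal indices when $k+l>j$ strictly: one must peel the surplus resolvent powers off as bounded operators so that exactly $j$ powers remain to sit with $B$ and feed Lemma~\ref{lem_B_+_relative_dif_powers}(ii), and one must check this is compatible with writing the expression in the three-factor form $R_nS_nT_n^*$ demanded by Lemma~\ref{so_inLp} (in particular that $T_n^*$, which is a resolvent power times possibly $P_n$, really is the adjoint of a strongly convergent sequence — true since $P_n$ and resolvents of the self-adjoint $A_0$ are self-adjoint up to conjugating $z$). Everything else is routine; no genuinely hard estimate is involved once the algebraic splitting and the ideal arithmetic are set up correctly.
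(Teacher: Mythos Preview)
Your proof is correct. For part (i) you go through the resolvent identity, whereas the paper invokes the common-core criterion \cite[Theorem VIII.25(a)]{RS_book_I}: since $B$ is bounded, $\dom(A_0)$ is a common core for all $A_0+B_n$ and $A_0+B$, and on that core $(A_0+B_n)\xi\to(A_0+B)\xi$ follows directly from $B_n\to B$ strongly. Both arguments are standard and equally short. For part (ii) your splitting $P_nBP_n-B=P_nB(P_n-1)+(P_n-1)B$ works, but the paper avoids it by observing in one stroke that
\[
(A_0-z)^{-k}B_n(A_0-z)^{-l}=P_n\big[(A_0-z)^{-k}B(A_0-z)^{-l}\big]P_n,
\]
since $P_n$ commutes with resolvents of $A_0$; then a single application of Lemma~\ref{so_inLp} with $R_n=T_n=P_n$ and fixed middle factor $S=(A_0-z)^{-k}B(A_0-z)^{-l}\in\cL_{(p+1)/j}(\cH)$ (by Lemma~\ref{lem_B_+_relative_dif_powers}(ii)) finishes the job. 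Your two-term decomposition and the attendant bookkeeping about surplus resolvent powers are unnecessary once you make this observation.
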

\begin{proof}
(i). Since the operator $B$ is bounded, it follows that the operators $A_0+B_n$ and $A_0+B$ have common core $\dom(A_0)$. Therefore, by \cite[Theorem VIII.25 (a)]{RS_book_I} it is sufficient to show that $(A_0+B_n)\xi\to (A_0+B)\xi$ for all $\xi\in\dom(A_0)$. This easily follows from the fact that $B_n\to B$ in the strong operator topology.

(ii). Since $k+l\geq j$, Lemma \ref{lem_B_+_relative_dif_powers} (ii) implies that 
$(A_0-z)^{-k}B(A_0-z)^{-l}\in\cL_{\frac{p+1}{j}}(\cH).$
Therefore, since  
$ (A_0-z)^{-k}B_{n}(A_0-z)^{-l}= P_n(A_0-z)^{-k}B(A_0-z)^{-l}P_n,$
and $P_n\to 1$ in the strong operator topology, the assertion follows from Lemma \ref{so_inLp}.

\end{proof}

Next,  fix $j=1,\dots, p$ and consider the difference
$(A_0+B-z)^{-j}-(A_0-z)^{-j}.$
Our aim now is to show that this operator belongs to a Schatten ideal $\cL_q(\cH)$, with $q\geq 1$ chosen as small as possible. 
To lighten notation, we introduce
$A_1=A_0+B.$

Using the elementary identity
\begin{equation*}
C_1^j - C_2^j= \sum_{k_0+k_1=j-1}C_1^{k_0} [C_1 - C_2] C_2^{k_1}, \quad C_1,C_2 \in \cB(\cH), 
\; j \in \bbN,
\end{equation*}
and the resolvent identity we can write
\begin{align*}
\begin{split}
\left( A_1-z \right)^{-j}& - \left( A_0 -z \right)^{-j}
\\
&=\sum_{k_0+k_1=j-1}(A_1-z)^{-k_0}\Big(\left( A_1 -z \right)^{-1} - \left( A_0 -z \right)^{-1}\Big)(A_0-z)^{-k_1}
\\
&=-\sum_{k_0+k_1=j-1}(A_1-z)^{-k_0-1}B(A_0-z)^{k_1-1}.
\end{split}
\end{align*}

Writing 
$$(A_1-z)^{-k_0-1}=(A_0-z)^{-k_0-1}+\big((A_1-z)^{-k_0-1}-(A_0-z)^{-k_0-1}\big)$$ and repeating the same argument for the second term on the right-hand side we obtain
\begin{align*}
\begin{split}
\left( A_1 -z \right)^{-j}& - \left( A_0 -z \right)^{-j}
\\
&=-\sum_{k_0+k_1=j-1}(A_0-z)^{-k_0-1}B(A_0-z)^{k_1-1}\\
&\qquad-\sum_{k_0+k_1=j-1}\Big((A_1-z)^{-k_0-1}-(A_0-z)^{-k_0-1}\Big)B(A_0-z)^{k_1-1}\\
&=-\sum_{k_0+k_1=j-1}(A_0-z)^{-k_0-1}B(A_0-z)^{k_1-1}\\
&\qquad -\sum_{k_0+k_1+k_2=j-1}(A_1-z)^{-k_0-1}B(A_0-z)^{-k_1-1}B(A_0-z)^{k_1-1}.\end{split}
\end{align*}

If, for arbitrary  $j=1,\dots, p$, $i\in\bbN$, $z\in\bbC\setminus\bbR$ and $X_1,\dots, X_i\in\cB(\cH)$, we introduce the following operators
\begin{align}\label{T_n_resolvents}
\begin{split}
T_1^{(j)}&(X_1)=\sum_{k_0+k_1=j-1}(A_0-z)^{-k_0-1}X_1(A_0-z)^{-k_1-1},\\
T_2^{(j)}&(X_1,X_2)=\sum_{k_0+k_1+k_2=j-1}(A_0-z)^{-k_0-1}X_1(A_0-z)^{-k_1-1}X_2(A_0-z)^{-k_2-1},\\
&\dots\\
T_i^{(j)}&(X_1,\dots, X_i)\\
&=\sum_{k_0+\dots+k_i=j-1}(A_0-z)^{-k_0-1}X_1(A_0-z)^{-k_1-1}\dots X_i(A_0-z)^{-k_i-1},
\end{split}
\end{align}
and \begin{align*}
R_i^{(j)}&(B; X_1,\dots, X_i)\\
&=\sum_{k_0+\dots+k_i=j-1}(A_1-z)^{-k_0-1}X_1(A_0-z)^{-k_1-1}\dots X_i(A_0-z)^{-k_i-1},
\end{align*}
then we have a Taylor formula for the difference $(A_1-z)^{-j}-(A_0-z)^{-j}$ of the form,
\begin{align}\label{diff_resolvent_Ti}
\begin{split}
\left( A_1 -z \right)^{-j}& - \left( A_0 -z \right)^{-j}
\\
&=T_1^{(j)}(B)+\dots +T_j^{(j)}(B,\dots, B)+R_{j+1}^{(j)}(B;B,\dots, B).
\end{split}
\end{align}

\begin{lemma}\label{mappings_Ti_lem}
Fix $j=1,\dots, p$ and assume that $B$ is a $p$-relative trace-class perturbation of $A_0$. Set $P_n=\chi_{[-n,n]}(A_0)$ and  $B_n:=P_nBP_n$. Using notations of \eqref{T_n_resolvents} and \eqref{diff_resolvent_Ti}, the following assertions hold:
\begin{enumerate}
\item For every $i\in\bbN$ we have $T_i^{(j)}(B,\dots, B)\in \cL_{\frac{p+1}{j+i}}(\cH)$.
\item For every $i\in\bbN$ we have the convergence
$$\lim_{n\to\infty}\Big\|T_i^{(j)}(B_n,\dots, B_n)-T_i^{(j)}(B,\dots, B)\Big\|_{\frac{p+1}{j+i}}=0.$$
\item For every $i\in\bbN$ we have $R_i^{(j)}(B; B,\dots, B)\in \cL_{\frac{p+1}{i}}(\cH)$.
\item For every $i\in\bbN$ we have the convergence
$$\lim_{n\to\infty}\Big\|R_i^{(j)}(B_n;B_n,\dots, B_n)-R_i^{(j)}(B;B,\dots, B)\Big\|_{\frac{p+1}{i}}=0.$$
\end{enumerate}
\end{lemma}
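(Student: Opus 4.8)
The plan is to prove all four assertions of Lemma~\ref{mappings_Ti_lem} in parallel, since they are of two types (Schatten membership, and norm-convergence of the cut-off approximants), and each type follows from essentially the same combinatorial bookkeeping applied once to the ``pure resolvent'' operators $T_i^{(j)}$ and once to the ``mixed'' operators $R_i^{(j)}$. First I would record the basic estimate behind everything: by Lemma~\ref{lem_B_+_relative_dif_powers}(ii), for $k+l=m$ with $1\le m\le p+1$ one has $(A_0-z)^{-k}B(A_0-z)^{-l}\in\cL_{(p+1)/m}(\cH)$, and of course $(A_0-z)^{-k}B(A_0-z)^{-l}$ is bounded (with norm $\le\|B\|$) for $k+l\ge 0$. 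Then each summand of $T_i^{(j)}(B,\dots,B)$ is a product of $i$ factors of the form $(A_0-z)^{-k_{r-1}-1}B(A_0-z)^{-k_r-1}$ sharing resolvent powers, and a careful count shows the total number of resolvent factors is $\sum_{r=0}^{i}(k_r+1)=(j-1)+(i+1)=j+i$ distributed among $i$ copies of $B$. Splitting each resolvent power appropriately and applying the Hölder inequality for Schatten norms, $\|XY\|_{1/(a+b)}\le\|X\|_{1/a}\|Y\|_{1/b}$, across the $i$ blocks gives membership in $\cL_{(p+1)/(j+i)}(\cH)$, which is (i); since $j\ge 1$ and $i\ge 1$ we have $(p+1)/(j+i)\le(p+1)/2<\infty$ but may be less than $1$, so I would use the quasi-Banach Hölder inequality valid for all exponents in $(0,\infty]$ (cf.\ the interpolation references already cited).

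For assertion (iii) the same count applies to $R_i^{(j)}(B;B,\dots,B)$, except that the leading resolvent $(A_1-z)^{-k_0-1}$ cannot be combined with a $B$ on its left — there is none — so the first block contributes only boundedness, not a Schatten gain. Hence only the remaining $i-1$ copies of $B$ together with the resolvent powers $(A_0-z)^{-k_1-1},\dots,(A_0-z)^{-k_i-1}$, plus one leftover resolvent power from the $(A_0-z)^{-k_0-1}$ that is hidden inside... actually the cleanest accounting is: write $(A_1-z)^{-k_0-1}$ as a bounded operator, and pair each of the $i$ copies of $B$ with the resolvent $(A_0-z)^{-k_r-1}$ immediately to its right; these $i$ pairs, together with the still-unused $(A_0-z)^{-k_i-1}$... no — let me instead pair $B$ in block $r$ (for $r=1,\dots,i$) with $(A_0-z)^{-k_r-1}$, giving $i$ factors each in $\cL_{(p+1)/(k_r+1)}$, whose product lies in $\cL_{(p+1)/(\sum(k_r+1))}=\cL_{(p+1)/(j-1-k_0+i)}$; using $k_0\ge0$ this embeds into $\cL_{(p+1)/i}(\cH)$ after absorbing the bounded factor $(A_1-z)^{-k_0-1}$ and any extra resolvent power. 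So (iii) holds with exponent $(p+1)/i$ as claimed, the loss of one unit compared to (i) reflecting precisely the one resolvent power ``wasted'' on the non-improving $A_1$-resolvent.

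With the structural estimates in hand, (ii) and (iv) are convergence statements that I would deduce from Lemma~\ref{so_inLp} (the ``strong $\times$ Schatten $\times$ strong'' continuity lemma) together with Lemma~\ref{approx_A_n}. The point is that $B_n=P_nBP_n$, so $(A_0-z)^{-k}B_n(A_0-z)^{-l}=P_n(A_0-z)^{-k}B(A_0-z)^{-l}P_n$ converges to $(A_0-z)^{-k}B(A_0-z)^{-l}$ in $\cL_{(p+1)/(k+l)}$ by Lemma~\ref{approx_A_n}(ii); and a product of $i$ such blocks converges in the product Schatten norm by a telescoping argument — replace one block at a time, using that the unchanged blocks are uniformly bounded in their respective Schatten norms (boundedness of $\|B_n\|\le\|B\|$ and $\sup_n\|P_n\|\le1$) and Hölder. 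For (iv) one additionally needs $(A_1^{(n)}-z)^{-1}:=(A_0+B_n-z)^{-1}\to(A_1-z)^{-1}$ in the strong operator topology, which is exactly Lemma~\ref{approx_A_n}(i) (strong resolvent convergence), and this is fed into the left-most bounded slot via Lemma~\ref{so_inLp}. The main obstacle, and the only place requiring genuine care rather than routine manipulation, is the combinatorial/interpolation step: correctly verifying that in every summand the resolvent powers can be redistributed so that each copy of $B$ is flanked on at least one side to form a factor in the right Schatten class, and checking that the exponents add up under Hölder to exactly $(p+1)/(j+i)$ in the $T$-case and $(p+1)/i$ in the $R$-case — in particular handling the degenerate sub-cases where some $k_r=0$ and making sure no resolvent power is ``double-counted.'' Once that bookkeeping is pinned down for a single summand, summing over the finitely many index tuples $k_0+\dots+k_i=j-1$ and passing to the limit are immediate.
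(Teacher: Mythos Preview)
Your proposal is correct and follows essentially the same approach as the paper: factor each summand into blocks of the form $(A_0-z)^{-\alpha}B(A_0-z)^{-\beta}$ or $B(A_0-z)^{-\beta}$, apply Lemma~\ref{lem_B_+_relative_dif_powers} to each block, combine via H\"older, and then upgrade membership to convergence using Lemma~\ref{so_inLp} together with Lemma~\ref{approx_A_n}. The paper's bookkeeping is slightly cleaner than yours: for (i) it takes the first block $(A_0-z)^{-k_0-1}B(A_0-z)^{-k_1-1}\in\cL_{(p+1)/(k_0+k_1+2)}$ and then each subsequent block $B(A_0-z)^{-k_l-1}\in\cL_{(p+1)/(k_l+1)}$, $l=2,\dots,i$, so the exponents sum directly to $j+i$ with no redistribution needed; and for (iii) it simply uses the crude bound $B(A_0-z)^{-k_l-1}\in\cL_{p+1}$ for each of the $i$ copies, giving $\cL_{(p+1)/i}$ immediately --- your more refined estimate followed by an embedding is correct but unnecessary.
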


\begin{proof}
(i). Since the operator $B$ is a $p$-relative trace-class perturbation of $A_0$, Lemma~\ref{lem_B_+_relative_dif_powers} implies that 
\begin{equation}\label{Ti_1}
(A_0-z)^{-k_0-1}B(A_0-z)^{-k_1-1}\in \cL_{\frac{p+1}{k_0+k_1+2}}(\cH)
\end{equation}
 and 
 \begin{equation}\label{Ti_2}
 B(A_0-z)^{-k_l-1}\in\cL_{\frac{p+1}{k_l+1}},\quad l=2,\dots i.
 \end{equation}

Hence, by the  H\"older inequality we have that 
\begin{align*}(A_0-z)^{-k_0-1}B(A_0-z)^{-k_1-1}\dots B(A_0-z)^{-k_i-1}\in \cL_{\frac{p+1}{k_0+k_1+\dots+k_i+i+1}}(\cH).
\end{align*} 
Since $k_0+k_1+\dots+k_i=j-1$, we then obtain the required result:
\begin{align*}
T_i^{(j)}
(B,\dots, B)
&=\sum_{k_0+\dots+k_i=j-1}(A_0-z)^{-k_0-1}B(A_0-z)^{-k_1-1}\dots B(A_0-z)^{-k_i-1}\\
&\in \cL_{\frac{p+1}{j+i}}(\cH).
\end{align*}

(ii). Since $P_n\to 1$ in the strong operator topology, Lemma \ref{so_inLp} and inclusions \eqref{Ti_1}, \eqref{Ti_2} imply that 
\begin{align*}
\begin{split}
(A_0-z)^{-k_0-1}B_n(A_0-z)^{-k_1-1}&=P_n(A_0-z)^{-k_0-1}B(A_0-z)^{-k_1-1}P_n\\
 &\to (A_0-z)^{-k_0-1}B(A_0-z)^{-k_1-1} 
\end{split}
\end{align*}
 in $\cL_{\frac{p+1}{k_0+k_1+2}}(\cH)$
 and 
 \begin{equation*}\label{Ti_2_conv}
 B_n(A_0-z)^{-k_l-1}=P_nB(A_0-z)^{-k_l-1}P_n\\
 \to B(A_0-z)^{-k_l-1} 
 \end{equation*}
 in $\cL_{\frac{p+1}{k_l+1}}(\cH)$ for every $l=2,\dots i$. 
 Hence, using again the H\"older inequality, we conclude that 
{ $$\lim_{n\to\infty}\Big\|T_i^{(j)}(B_n,\dots, B_n)-T_i^{(j)}(B,\dots, B)\Big\|_{\frac{p+1}{j+i}}=0.$$}
 
 (iii). By Lemma \ref{lem_B_+_relative_dif_powers} we have that 
 $B(A_0-z)^{-1}\in\cL_{p+1}(\cH),$
 which implies that 
 $B(A_0-z)^{-k_l-1}\in \cL_{p+1}(\cH),$
 for any $k_l\in\bbZ_+$. 
 Therefore, by the H\"older inequality 
 $$B(A_0-z)^{-k_1-1}\dots B(A_0-z)^{-k_i-1}\in \cL_{\frac{p+1}{i}}(\cH),$$
that is 
 $R_i^{(j)}(B; B,\dots, B)\in \cL_{\frac{p+1}{i}}(\cH).$
 
 The proof of part (iv) can be obtained similarly to (ii) taking into account also that 
 $(A_{0}+B_n-z)^{-l}\to (A_0+B-z)^{-l},\quad l\in\bbN$
in the strong operator topology (see Lemma \ref{approx_A_n} (i)).
\end{proof}

The following theorem establishes, in particular, that the $p$-relative trace-class assumption implies that $A_1$ and $A_0$ are $p$-resolvent comparable  and the difference {$(A_0+B-z)^{-p}-(A_0-z)^{-p}$} can be approximated (in the trace-class norm) by $(A_0+P_nBP_n-z)^{-p}-(A_0-z)^{-p}$. The theorem is our first key result used in the proof of the principal trace formula. 

\begin{theorem}\label{thm_trace_class_approx}Assume that $B$ is a $p$-relative trace-class perturbation of $A_0$. Set $P_n=\chi_{[-n,n]}(A_0)$ and  $B_n:=P_nBP_n$ and let $z\in\bbC\setminus\bbR$. For any $j=1,\dots, p$ we have that 
$$\left(A_0+B-z\right)^{-j} - \left( A_0 -z \right)^{-j}\in \cL_{\frac{p+1}{j+1}}(\cH)$$ 
and $\left( A_0+B_n-z \right)^{-j} - \left( A_0 -z \right)^{-j}$ converges to $\left( A_0+B-z \right)^{-j} - \left( A_0 -z \right)^{-j}$ with respect to the norm  
of $\cL_\frac{p+1}{j+1}(\cH)$. 
For any $j>p$ the assertion holds in the trace-class ideal $\cL_1(\cH)$. 
\end{theorem}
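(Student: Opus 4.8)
The plan is to prove the two claims --- the Schatten-ideal membership and the norm convergence of the cut-off approximants --- together, by feeding the Taylor expansion \eqref{diff_resolvent_Ti} into Lemma \ref{mappings_Ti_lem}. Indeed, for $j=1,\dots,p$ we have the finite decomposition
\begin{equation*}
(A_1-z)^{-j}-(A_0-z)^{-j}=\sum_{i=1}^{j}T_i^{(j)}(B,\dots,B)+R_{j+1}^{(j)}(B;B,\dots,B),
\end{equation*}
with the analogous identity holding for $B$ replaced by $B_n$ (using $A_0+B_n$ in place of $A_1$). By Lemma \ref{mappings_Ti_lem}(i), the term $T_i^{(j)}(B,\dots,B)$ lies in $\cL_{(p+1)/(j+i)}(\cH)$, and since $i\geq 1$ we have $(p+1)/(j+i)\leq(p+1)/(j+1)$, so each such term lies in the larger ideal $\cL_{(p+1)/(j+1)}(\cH)$; by Lemma \ref{mappings_Ti_lem}(iii) the remainder $R_{j+1}^{(j)}(B;B,\dots,B)$ lies in $\cL_{(p+1)/(j+1)}(\cH)$ as well. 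Since $j\leq p$ gives $(p+1)/(j+1)\geq 1$, all these are genuine Schatten ideals and the sum lies in $\cL_{(p+1)/(j+1)}(\cH)$. This proves the first membership claim.

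For the convergence, I would simply apply Lemma \ref{mappings_Ti_lem}(ii) to each $T_i^{(j)}$-term (giving convergence in $\cL_{(p+1)/(j+i)}(\cH)$, hence a fortiori in $\cL_{(p+1)/(j+1)}(\cH)$ by the continuous inclusion of the smaller ideal into the larger one) and Lemma \ref{mappings_Ti_lem}(iv) to the remainder term (giving convergence directly in $\cL_{(p+1)/(j+1)}(\cH)$). Summing the finitely many terms and using the triangle inequality in $\cL_{(p+1)/(j+1)}(\cH)$ yields
\begin{equation*}
\lim_{n\to\infty}\Bigl\|\bigl((A_0+B_n-z)^{-j}-(A_0-z)^{-j}\bigr)-\bigl((A_0+B-z)^{-j}-(A_0-z)^{-j}\bigr)\Bigr\|_{\frac{p+1}{j+1}}=0.
\end{equation*}

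For the final sentence (the case $j>p$), I would reduce to the case $j=p$. Write $(A_1-z)^{-j}-(A_0-z)^{-j}=\sum_{k_0+k_1=j-p-1}(A_1-z)^{-k_0-?}\cdots$; more cleanly, using $C_1^j-C_2^j=\sum_{k_0+k_1=j-1}C_1^{k_0}(C_1-C_2)C_2^{k_1}$ with $C_i=(A_i-z)^{-1}$ would give too fine a decomposition, so instead I would use the identity $C_1^j-C_2^j=\sum_{a+b=j-p} C_1^{a}\,(C_1^{p}-C_2^{p})\,C_2^{b-1}\cdot(\text{adjust indices})$, i.e. factor out the already-established trace-class difference $(A_1-z)^{-p}-(A_0-z)^{-p}$ and multiply on both sides by bounded powers of the resolvents $(A_1-z)^{-1}$, $(A_0-z)^{-1}$. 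Since $\cL_1(\cH)$ is an ideal in $\cB(\cH)$, the product stays trace class; and the convergence in $\cL_1(\cH)$ follows from the $j=p$ convergence together with Lemma \ref{so_inLp}, using that $(A_0+B_n-z)^{-l}\to(A_0+B-z)^{-l}$ strongly (Lemma \ref{approx_A_n}(i)) and that the resolvents are uniformly bounded in $n$.

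The only mildly delicate point is bookkeeping: making sure the index ranges in the telescoping identity for $j>p$ are set up so that exactly one factor of $(A_1-z)^{-p}-(A_0-z)^{-p}$ appears and the remaining factors are nonnegative (bounded) powers of resolvents; this is purely a matter of writing $j = p + r$ with $r\geq 1$ and distributing $r$ extra resolvent factors. No genuine obstacle is expected, since all the analytic content has been isolated in Lemmas \ref{so_inLp}, \ref{approx_A_n}, and \ref{mappings_Ti_lem}.
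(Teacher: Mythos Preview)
Your treatment of $j=1,\dots,p$ is correct and coincides with the paper's proof: feed the expansion \eqref{diff_resolvent_Ti} into Lemma~\ref{mappings_Ti_lem}(i)--(iv) and use the inclusions $\cL_{(p+1)/(j+i)}\subset\cL_{(p+1)/(j+1)}$.

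For $j>p$ there is a real gap, not just index bookkeeping. The identity you write, $C_1^j-C_2^j=\sum_{a+b=j-p}C_1^{a}(C_1^{p}-C_2^{p})C_2^{b-1}$, is not an algebraic identity in noncommuting variables (already for $j=p+1$ the proposed right-hand side leaves the cross terms $C_1^pC_2-C_1C_2^p$ uncancelled). The valid two-term split $C_1^j-C_2^j=C_1^{j-p}(C_1^p-C_2^p)+(C_1^{j-p}-C_2^{j-p})C_2^p$ does not rescue the argument either: for $p<j<2p$ the second summand lies only in $\cL_{(p+1)/(j-p+1)}$, since multiplying $C_1^{j-p}-C_2^{j-p}\in\cL_{(p+1)/(j-p+1)}$ by the merely \emph{bounded} operator $C_2^p=(A_0-z)^{-p}$ cannot improve the Schatten class. (This is the same obstruction that made you discard the one-step expansion $\sum_{k_0+k_1=j-1}C_1^{k_0}(C_1-C_2)C_2^{k_1}$: the terms with $k_1<p$ are not trace class.)

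The fix the paper has in mind by ``similar'' is simply to keep using \eqref{diff_resolvent_Ti}: for $j>p$ every term $T_i^{(j)}$ has total resolvent degree $j+i>p+1$, so the H\"older argument in the proof of Lemma~\ref{mappings_Ti_lem} (after capping each factor's Schatten index at $1$ when its resolvent power exceeds $p+1$) places it in $\cL_1$, and likewise $R_{j+1}^{(j)}\in\cL_{(p+1)/(j+1)}\subset\cL_1$; convergence follows by the same mechanism. Alternatively, if you prefer a reduction in the spirit of your sketch, peel off one resolvent at a time: $C_1^j-C_2^j=(C_1-C_2)C_2^{j-1}+C_1(C_1^{j-1}-C_2^{j-1})$. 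The first summand equals $-(A_1-z)^{-1}B(A_0-z)^{-j}\in\cL_1$ because $j\geq p+1$, and the second is bounded times the inductive hypothesis (base case $j=p$); the convergence then follows exactly via Lemma~\ref{so_inLp} and Lemma~\ref{approx_A_n}(i) as you indicated.
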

\begin{proof}We prove the assertion for $j=1,\dots, p$ only, as the proof for $j>p$ is similar.

Let $j=1,\dots, p$ be fixed.
By \eqref{diff_resolvent_Ti} we can write 
\begin{align*}
\begin{split}
\left( A_0+B -z \right)^{-j}& - \left( A_0 -z \right)^{-j}
\\
&=T_1^{(j)}(B)+\dots +T_j^{(j)}(B,\dots, B)+R_{j+1}^{(j)}(B;B,\dots, B).
\end{split}
\end{align*}

Hence, the first assertion follows from Lemma  \ref{mappings_Ti_lem} (i) and (iii).
%

To prove  convergence, we use \eqref{diff_resolvent_Ti} with $B_n$ instead of $B$ to write 
\begin{align*}
(A_0&+B_n-z )^{-j} - ( A_0 -z)^{-j}\\
&=T_1^{(j)}(B_{n})+\dots +T_j^{(j)}(B_{n},\dots, B_{n})+R_{j+1}^{(j)}(B_{n};B_{n},\dots, B_{n}).\nonumber
\end{align*}
Appealing to Lemma \ref{mappings_Ti_lem} (ii) and (iv) we conclude the proof.
\end{proof}

Next, we proceed with the discussion of the right-hand side of the principal trace formula \eqref{ch_principla trace formula_intro_formula}. Let, as before, $B$ be a $p$-relative trace-class perturbation of a self-adjoint operator $A_0$. We introduce the straight-line path $\{A_s\}_{s\in[0,1]}$ joining $A_0$ and $A_0+B$, by setting
$A_s=A_0+sB,\quad s\in[0,1].$
We also introduce the path 
$\{A_{s,n}\}_{s\in[0,1]}$ joining $A_0$ and $A_0+B_n$ by
$$A_{s,n}=A_0+sB_n, \quad s\in[0,1], n\in\bbN.$$

We firstly note the following:

\begin{remark}\label{rem_norm_dif_res_independent}
Repeating the proof of Theorem \ref{thm_trace_class_approx} replacing $B$ and $B_n$ by the operators $sB$ and $sB_n$\, $s\in(0,1]$, respectively, one can conclude that the functions 
$s\mapsto \|\left(A_s-z\right)^{-j} - \left( A_0 -z \right)^{-j}\|_{\frac{p+1}{j+1}},$
and $s\mapsto \|\left( A_{s,n}-z \right)^{-j} - \left( A_0 -z \right)^{-j}\|_{\frac{p+1}{j+1}}$
are continuous with respect to $s$ and uniformly bounded with respect to $n\in\bbN$. $\diamond$  
\end{remark}

To show that the right-hand side of the principal trace formula \eqref{ch_principla trace formula_intro_formula} is well-defined we recall some results from the theory of double operator integration.  
We refer the reader to \cite{BS_DOI_I,BS_DOI_II,BS_DOI_III,PS_Crelle,PS_JOT}  for the precise definition of double operator integrals and their properties (see also the survey \cite{BS_DOI_2003}). 

Suppose that $A_0,A_1$ are self-adjoint (possibly unbounded) operators acting on $\cH$, and $f$ is a bounded Borel function on $\sigma(A_0)\times\sigma(A_1)$. Heuristically, the \emph{double operator integral} $T^{A_0,A_1}_f$ is a mapping acting on $\cB(\cH)$ and is defined using the spectral measures $E_{A_0}$ and $E_{A_1}$ of $A_0$ and $A_1$ by
\[T^{A_0,A_1}_f(B):=\int_{\sigma(A_1)}\int_{\sigma(A_0)}f(\lambda,\mu) \, dE_{A_0}(\lambda)\,B\,dE_{A_1}(\mu), \quad B\in\cB(\cH).\]
For a given function $f$ on $\sigma(A_0)\times\sigma(A_1)$, the double operator integral $T^{A_0,A_1}_f$ may or may not be a bounded operator on $\cB(\cH)$ (or on normed ideals of $\cB(\cH)$).
\begin{proposition}\cite{Ya05}\label{prop_final_DOI}
Let $A,B$ be self-adjoint operators. Assume that for some odd $m\in\bbN$  for all   $a\in\bbR\backslash\{0\}$, we have 
\begin{equation*}
\big[(B - a i)^{-m} - (A - a i)^{-m}\big] \in \cL_1\big(\cH\big)
\end{equation*} and let $f\in S(\bbR)$. Then there exist  double operator integrals $T_{f,a_1}^{A,B}$ and $T_{f,a_2}^{A,B}$, which are bounded on $\cL_1(\cH)$ and 
$$f(A)-f(B)=\sum_{j=1,2}T_{f,a_j}^{A,B}\big((A-a_ji)^{-m} - (B-a_ji)^{-m}\big)\in\cL_1(\cH).$$
\end{proposition}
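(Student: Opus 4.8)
The plan is to exhibit the two double operator integrals explicitly as the operators attached to the divided–difference symbols of $f$ relative to $g_a(\lambda)=(\lambda-ai)^{-m}$, after a preliminary localisation that disposes of a spurious off–diagonal singularity. The starting point is the scalar identity $f(\lambda)-f(\mu)=\psi_a(\lambda,\mu)\big(g_a(\lambda)-g_a(\mu)\big)$, where $\psi_a(\lambda,\mu):=\big(f(\lambda)-f(\mu)\big)\big/\big(g_a(\lambda)-g_a(\mu)\big)$; this lifts, formally, to the operator identity $f(A)-f(B)=T^{A,B}_{\psi_a}\big((A-ai)^{-m}-(B-ai)^{-m}\big)$, and the trace-class membership of the right-hand factor is exactly the hypothesis (equivalently, Theorem~\ref{thm_trace_class_approx}). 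The obstruction is that $\psi_a$ need not be bounded: off the diagonal, $g_a(\lambda)=g_a(\mu)$ occurs precisely when $(\lambda-ai)^m=(\mu-ai)^m$ with $\lambda\ne\mu$, and a short computation shows this forces $\mu\in S_a:=\{a(\cos(2\pi k/m)-1)/\sin(2\pi k/m):1\le k\le m-1,\ \sin(2\pi k/m)\ne0\}$, a finite set with $S_a=-S_a$, and then $\lambda=-\mu$. Since $f(-\mu)\ne f(\mu)$ in general, $\psi_a$ has genuine poles there and $T^{A,B}_{\psi_a}$ cannot be bounded for all $A,B$; this is why two resolvent parameters are unavoidable.

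First I would choose $a_1,a_2\in\bbR\setminus\{0\}$ with $S_{a_1}\cap S_{a_2}=\emptyset$ (since $S_a=a\,S_1$ for a fixed finite $S_1\subset\bbR\setminus\{0\}$, this excludes only finitely many ratios $a_1/a_2$) and pick $\chi_1,\chi_2\in C^\infty(\bbR)$ with $\chi_1+\chi_2\equiv1$, $0\le\chi_j\le1$, and $\chi_j\equiv0$ near $S_{a_j}$; put $f_j:=\chi_j f\in S(\bbR)$. Because $S_{a_j}=-S_{a_j}$, each $f_j$ vanishes on a two-dimensional neighbourhood of every off-diagonal zero $(-s,s)$ of $g_{a_j}(\lambda)-g_{a_j}(\mu)$, so
\[
\phi_j(\lambda,\mu):=\frac{f_j(\lambda)-f_j(\mu)}{(\lambda-a_ji)^{-m}-(\mu-a_ji)^{-m}}
\]
extends to a bounded $C^\infty$ function on $\bbR^2$ (by $0$ near those points, and by $f_j'(\lambda)/g_{a_j}'(\lambda)$ across the diagonal, where $g_{a_j}'$ has no real zero). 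One then sets $T^{A,B}_{f,a_j}:=T^{A,B}_{\phi_j}$, and since $f=f_1+f_2$ it remains to prove that each $T^{A,B}_{\phi_j}$ is bounded on $\cL_1(\cH)$ and that $T^{A,B}_{\phi_j}\big((A-a_ji)^{-m}-(B-a_ji)^{-m}\big)=f_j(A)-f_j(B)$.

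The main obstacle is the $\cL_1$-boundedness of $T^{A,B}_{\phi_j}$: after removing the poles, $\phi_j$ is merely bounded and smooth, and mere boundedness only gives boundedness of the double operator integral on $\cL_2$, which does not suffice. The remedy I would use is to verify that $\phi_j$ lies in the integral projective tensor class of Birman--Solomyak, i.e.\ admits a representation $\phi_j(\lambda,\mu)=\int_\Omega\alpha(\lambda,w)\beta(\mu,w)\,d\nu(w)$ with $\int_\Omega\|\alpha(\cdot,w)\|_\infty\|\beta(\cdot,w)\|_\infty\,d|\nu|(w)<\infty$; by \cite{BS_DOI_I,BS_DOI_2003} this forces $T^{A,B}_{\phi_j}$ to be bounded on every symmetrically normed ideal, and in particular on $\cL_1(\cH)$ with norm controlled by that integral. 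To produce the representation I would clear denominators, writing $\phi_j$ through the Schwartz function $(\lambda-a_ji)^m f_j(\lambda)$ and the polynomial $Q(\lambda,\mu):=\sum_{k=0}^{m-1}(\lambda-a_ji)^k(\mu-a_ji)^{m-1-k}$ — which, using that $m$ is odd (so that $Q$ has no real zeros at infinity, even along $\mu=-\lambda$), is bounded below on $(\supp f_j\times\bbR)\cup(\bbR\times\supp f_j)$ — and then apply the transfer identity $\lambda^a\mu^b h(\lambda)=\mu^b(\lambda^a h(\lambda))$ (which keeps functions in $S(\bbR)$) together with a finite asymptotic expansion at infinity, to reduce $\phi_j$ to a finite sum of: one-variable symbols $c(\lambda)$ or $c(\mu)$ with $c\in S(\bbR)$, whose double operator integrals are the bounded left/right multiplications by $c(A)$ or $c(B)$; and divided differences $(G(\lambda)-G(\mu))/(\lambda-\mu)$ with $G\in S(\bbR)$, which lie in the class via the Fourier representation $(G(\lambda)-G(\mu))/(\lambda-\mu)=\int_{\bbR}\int_0^1 i\xi\,\widehat G(\xi)\,e^{is\xi\lambda}e^{i(1-s)\xi\mu}\,ds\,d\xi$, of integral projective norm at most $\int_{\bbR}|\xi\widehat G(\xi)|\,d\xi<\infty$. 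Making this reduction completely explicit and uniform is the technical core of the argument.

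Granting the representation, the identity follows from the multiplication rule for double operator integrals. Writing $\phi_j(\lambda,\mu)=\int_\Omega\alpha(\lambda,w)\beta(\mu,w)\,d\nu(w)$ and applying $T^{A,B}_{\phi_j}$ to $(A-a_ji)^{-m}-(B-a_ji)^{-m}=g_{a_j}(A)-g_{a_j}(B)$, one distributes $g_{a_j}(A)-g_{a_j}(B)$ inside the integral and uses that $g_{a_j}$ is bounded; the result is the difference of the operator-norm convergent integrals attached to the symbols $g_{a_j}(\lambda)\phi_j(\lambda,\mu)$ and $g_{a_j}(\mu)\phi_j(\lambda,\mu)$ (both again in the class, being $\phi_j$ times a bounded function of a single variable), which is precisely the operator attached to the symbol $\big(g_{a_j}(\lambda)-g_{a_j}(\mu)\big)\phi_j(\lambda,\mu)=f_j(\lambda)-f_j(\mu)$, namely $f_j(A)-f_j(B)$. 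Summing over $j=1,2$ and using $f=f_1+f_2$ then yields $f(A)-f(B)=\sum_{j=1,2}T^{A,B}_{f,a_j}\big((A-a_ji)^{-m}-(B-a_ji)^{-m}\big)\in\cL_1(\cH)$, which is the assertion.
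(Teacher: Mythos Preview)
The paper does not give its own proof of this proposition: it is quoted verbatim from \cite{Ya05} and no proof environment follows the statement. So there is nothing in the paper to compare your argument against.

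That said, your outline is essentially Yafaev's own argument, and the architecture is correct: the identification of the off-diagonal zeros of $g_a(\lambda)-g_a(\mu)$ as the pairs $(-s,s)$ with $s\in S_a$, the use of two parameters $a_1,a_2$ together with a partition of unity $\chi_1+\chi_2=1$ to kill those singularities, and the reduction to showing that each resulting symbol $\phi_j$ belongs to the Birman--Solomyak integral projective tensor class, are exactly the steps in \cite{Ya05}. Your observation that $m$ odd is what guarantees the homogeneous part $\sum_k\lambda^k\mu^{m-1-k}$ of $Q$ has no real zeros off the diagonal (hence $|Q|\to\infty$ at infinity and is bounded below on the relevant set) is also the reason the hypothesis singles out odd $m$.

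The one place your sketch is genuinely thin is the last step, the verification that $\phi_j$ is a Schur multiplier on $\cL_1$. Writing $\phi_j=-\dfrac{f_j(\lambda)-f_j(\mu)}{\lambda-\mu}\cdot\dfrac{(\lambda-a_ji)^m(\mu-a_ji)^m}{Q(\lambda,\mu)}$ factors the problem into a divided difference (which you handle via Fourier) times a second factor that is \emph{not} bounded on $\bbR^2$; it is only controlled on $(\supp f_j\times\bbR)\cup(\bbR\times\supp f_j)$. One cannot simply multiply Schur multipliers by unbounded symbols, so the ``finite asymptotic expansion at infinity'' you allude to has to be made precise: in \cite{Ya05} this is done by absorbing the polynomial growth into the Schwartz functions $(\lambda-a_ji)^m f_j(\lambda)$ and carefully expanding $1/Q$ so that every term is either a function of one variable or a genuine divided difference of a Schwartz function. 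This is the technical heart of the proof and deserves more than a sentence; as written, your reduction is a plausible plan rather than an argument.
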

\begin{theorem}\cite[Section 3]{CGLNPS16}\label{DOI_converge_pointwise}
Assume that $A,A_n, B,B_n$, $n\in\bbN$, are self-adjoint operators such that $A_n\to A$ and $B_n\to B$ as $n\to\infty$ in the strong resolvent sense.
In addition we assume that for some $m \in \bbN$, $m$ odd,  and every $a\in\bbR \backslash \{0\}$, 
$$\big[( A + ia)^{-m}
- ( B + ia)^{-m}\big],  \quad \big[( A_n + ia)^{-m} - ( B_n + ia)^{-m}\big] \in\cL_1(\cH), $$
and 
\begin{equation*}
\lim_{n \rightarrow \infty} \big\|\big[( A_n + ia)^{-m} - ( B_n + ia)^{-m}\big] - \big[( A + ia)^{-m}
- ( B + ia)^{-m}\big]\|_1 =0. 
\end{equation*} 

Then  for any $f\in S(\bbR)$ and the double operator integrals $T_{f,a_j}^{A,B}$ and $T_{f,a_j}^{A_n,B_n}$ as in Proposition \ref{prop_final_DOI} we have that 
$T_{f,a_j}^{A_n,B_n}\to T_{f,a_j}^{A,B},\quad j=1,2,$
pointwise on $\cL_1(\cH)$. In particular, 
\begin{equation*}
\lim_{n \rightarrow \infty} \big\| [f(A_n) - f(B_n)] - [f(A)
- f(B)]\big\|_{\cL_p(\cH)}=0.  
\end{equation*} 

\end{theorem}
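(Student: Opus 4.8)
The statement has two parts, and my plan is to reduce the second to the first, so the real content is the pointwise convergence $T_{f,a_j}^{A_n,B_n}\to T_{f,a_j}^{A,B}$ on $\cL_1(\cH)$, $j=1,2$. Granting this, the ``in particular'' clause follows by a routine estimate: by Proposition~\ref{prop_final_DOI},
\[f(A_n)-f(B_n)-\big(f(A)-f(B)\big)=\sum_{j=1,2}\Big(T_{f,a_j}^{A_n,B_n}\big(Y_n^{(j)}\big)-T_{f,a_j}^{A,B}\big(Y^{(j)}\big)\Big),\]
where $Y_n^{(j)}:=(A_n-a_ji)^{-m}-(B_n-a_ji)^{-m}$ and $Y^{(j)}:=(A-a_ji)^{-m}-(B-a_ji)^{-m}$, so that $\|Y_n^{(j)}-Y^{(j)}\|_1\to0$ by hypothesis; splitting $T_{f,a_j}^{A_n,B_n}(Y_n^{(j)})-T_{f,a_j}^{A,B}(Y^{(j)})=T_{f,a_j}^{A_n,B_n}(Y_n^{(j)}-Y^{(j)})+\big(T_{f,a_j}^{A_n,B_n}(Y^{(j)})-T_{f,a_j}^{A,B}(Y^{(j)})\big)$ and using that the $\cL_1(\cH)$-operator norm of $T_{f,a_j}^{A_n,B_n}$ is bounded by a constant depending only on $f,m,a_j$ (hence uniformly in $n$), the first summand tends to $0$ in $\cL_1(\cH)$ and the second by the pointwise convergence. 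This gives $\cL_1(\cH)$-convergence of $f(A_n)-f(B_n)$ to $f(A)-f(B)$, hence $\cL_p(\cH)$-convergence since $\|\cdot\|_p\le\|\cdot\|_1$.

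To prove the pointwise convergence I would pass through an integral (``pseudo-tensor'') representation of the symbols. Recall that $T_{f,a_j}^{A,B}$ is the double operator integral with symbol the divided difference $\psi_{a_j}(\l,\m)=\big(f(\l)-f(\m)\big)\big/\big(g_{a_j}(\l)-g_{a_j}(\m)\big)$, $g_a(\l)=(\l-ai)^{-m}$, extended by its limiting values on the diagonal and on the coincidence set of $g_{a_j}$. The structural input — essentially the content of \cite{BS_DOI_I,BS_DOI_II,BS_DOI_III,PS_Crelle,PS_JOT} underlying Proposition~\ref{prop_final_DOI} — is that, for $f\in S(\bbR)$, each $\psi_{a_j}$ admits a representation
\[\psi_{a_j}(\l,\m)=\int_{\O}\a_{a_j}(\l,\o)\,\b_{a_j}(\m,\o)\,d\nu_{a_j}(\o),\qquad\int_{\O}\|\a_{a_j}(\cdot,\o)\|_\infty\,\|\b_{a_j}(\cdot,\o)\|_\infty\,d|\nu_{a_j}|(\o)<\infty,\]
in which, for $\nu_{a_j}$-a.e.\ $\o$, the fibres $\a_{a_j}(\cdot,\o)$ and $\b_{a_j}(\cdot,\o)$ are bounded \emph{continuous} functions on $\bbR$ — concretely they can be built from $\widehat f\in S(\bbR)$ and from divided differences of $g_{a_j}$, which are smooth and bounded. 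For any such representation the double operator integral is realised by the $\cL_1(\cH)$-convergent Bochner integral $T_{f,a_j}^{A,B}(X)=\int_{\O}\a_{a_j}(A,\o)\,X\,\b_{a_j}(B,\o)\,d\nu_{a_j}(\o)$, $X\in\cL_1(\cH)$, and identically with $(A,B)$ replaced by $(A_n,B_n)$; this also reproves the uniform operator-norm bound used above.

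The final step is then dominated convergence in $\cL_1(\cH)$. Fix $j$ and $X\in\cL_1(\cH)$. For $\nu_{a_j}$-a.e.\ $\o$, the strong resolvent convergence $A_n\to A$, $B_n\to B$ together with boundedness and continuity of $\a_{a_j}(\cdot,\o),\b_{a_j}(\cdot,\o)$ yields $\a_{a_j}(A_n,\o)\to\a_{a_j}(A,\o)$ and $\b_{a_j}(B_n,\o)\to\b_{a_j}(B,\o)$ in the strong operator topology, whence by Lemma~\ref{so_inLp} $\a_{a_j}(A_n,\o)\,X\,\b_{a_j}(B_n,\o)\to\a_{a_j}(A,\o)\,X\,\b_{a_j}(B,\o)$ in $\cL_1(\cH)$. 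The integrand is dominated, uniformly in $n$, by the $|\nu_{a_j}|$-integrable function $\o\mapsto\|\a_{a_j}(\cdot,\o)\|_\infty\|\b_{a_j}(\cdot,\o)\|_\infty\|X\|_1$, so dominated convergence for Bochner integrals gives $T_{f,a_j}^{A_n,B_n}(X)\to T_{f,a_j}^{A,B}(X)$ in $\cL_1(\cH)$, as required.

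I expect the genuine difficulty to be in the middle step: verifying that the divided-difference symbols $\psi_{a_j}$ really lie in the class admitting a representation with \emph{continuous} and uniformly integrable fibres. The delicate point is that $g_a(\l)=(\l-ai)^{-m}$ is not injective on $\bbR$, so $\psi_a$ has apparent singularities on the coincidence set $\{(\l,\m):\l\ne\m,\ g_a(\l)=g_a(\m)\}$; one must check these are removable and that the symbol retains enough smoothness (Schwartz- or Besov-type control of $\psi_{a_j}$ and sufficiently many derivatives). It is exactly for this that one works with $m$ odd and with two distinct parameters $a_1\ne a_2$ simultaneously, as in Proposition~\ref{prop_final_DOI}. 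Given the representation, arranging the fibres to be continuous — which is what licenses invoking strong resolvent convergence in the last step — is routine from the Fourier-analytic construction of $\a_{a_j},\b_{a_j}$.
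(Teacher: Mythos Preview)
The paper does not contain its own proof of this statement: it is quoted verbatim from \cite[Section 3]{CGLNPS16} and used as a black box (see the text immediately following the theorem, which passes directly to Proposition~\ref{principla trace formula_right-hand side_well_defined}). So there is nothing in the present paper to compare your proposal against.

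That said, your outline is a faithful sketch of how the result is established in the cited source. The reduction of the ``in particular'' clause to the pointwise convergence via the splitting
\[
T_{f,a_j}^{A_n,B_n}(Y_n^{(j)})-T_{f,a_j}^{A,B}(Y^{(j)})
= T_{f,a_j}^{A_n,B_n}(Y_n^{(j)}-Y^{(j)})
+\big(T_{f,a_j}^{A_n,B_n}-T_{f,a_j}^{A,B}\big)(Y^{(j)})
\]
together with the uniform-in-$n$ norm bound is exactly right. For the pointwise convergence itself, the mechanism you describe --- an integral representation of the divided-difference symbol with bounded continuous fibres, then strong resolvent convergence $\Rightarrow$ strong operator convergence of $\a_{a_j}(A_n,\o),\b_{a_j}(B_n,\o)$ via the functional calculus, then Lemma~\ref{so_inLp} pointwise in $\o$, then Bochner dominated convergence --- is the standard route and is what underlies the argument in \cite{CGLNPS16}. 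You have also correctly located where the genuine work sits: the construction in \cite{Ya05} (which Proposition~\ref{prop_final_DOI} invokes) handles the non-injectivity of $g_a(\l)=(\l-ai)^{-m}$ by writing $f$ via the two parameters $a_1,a_2$ and producing symbols whose fibres are explicit smooth bounded functions; the continuity needed for the strong-resolvent step is then automatic.
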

\begin{proposition}
\label{principla trace formula_right-hand side_well_defined}
Let $B$ be a $p$-relative trace-class perturbation of a self-adjoint operator $A_0$ and let $A_s=A_0+sB, s\in[0,1]$. The function 
$s\mapsto e^{-tA_s^2}B, \quad t>0,$ is a continuous $\cL_1(\cH)$-valued function on $[0,1]$. In particular, the integral
$$\int_0^1\tr\Big(e^{-tA_s^2}B\Big)ds$$ is well-defined. 
\end{proposition}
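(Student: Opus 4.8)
The plan is to split the claim into two parts: first, that $e^{-tA_s^2}B \in \cL_1(\cH)$ for each fixed $s \in [0,1]$ and $t>0$; and second, that $s \mapsto e^{-tA_s^2}B$ is norm-continuous as a map into $\cL_1(\cH)$. For the membership, I would first write $B = (A_s^2+1)^{(p+1)/2}\cdot (A_s^2+1)^{-(p+1)/2}B$ — more precisely, factor $e^{-tA_s^2}B = \big[e^{-tA_s^2}(A_s^2+1)^{(p+1)/2}\big]\cdot\big[(A_s^2+1)^{-(p+1)/2}B\big]$. The second factor is trace class: $B$ is a $p$-relative trace-class perturbation of $A_0$, and since $sB$ is then also a $p$-relative trace-class perturbation of $A_0$, a Taylor-expansion argument of the type already carried out in Lemma~\ref{lem_B_+_relative_dif_powers} and Theorem~\ref{thm_trace_class_approx} (applied to $A_s = A_0 + sB$ in place of $A_1 = A_0+B$) shows $(A_s+i)^{-(p+1)}B \in \cL_1(\cH)$, equivalently $(A_s^2+1)^{-(p+1)/2}B\in\cL_1(\cH)$ by the functional-calculus remark after the definition. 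The first factor $e^{-tA_s^2}(A_s^2+1)^{(p+1)/2}$ is bounded by the spectral theorem, since $x \mapsto e^{-tx^2}(x^2+1)^{(p+1)/2}$ is a bounded function on $\bbR$ for every $t>0$. Hence the product lies in $\cL_1(\cH)$.

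For continuity in $s$, the natural route is double operator integration: apply Proposition~\ref{prop_final_DOI} and, more to the point, Theorem~\ref{DOI_converge_pointwise} with the Schwartz function $f(x) = e^{-tx^2}$ (times a fixed cutoff if one wants $f\in S(\bbR)$ literally, but $e^{-tx^2}$ is already Schwartz). Concretely, for $s, s' \in [0,1]$ write
\begin{align*}
e^{-tA_s^2}B - e^{-tA_{s'}^2}B
&= \big(e^{-tA_s^2} - e^{-tA_{s'}^2}\big)(A_0+i)^{-(p+1)}\cdot (A_0+i)^{p+1}B,
\end{align*}
but this is awkward because $(A_0+i)^{p+1}B$ need not be bounded. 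Instead I would argue: by Theorem~\ref{thm_trace_class_approx} applied with $sB$ and $s'B$, the quantities $(A_s+i)^{-(p+1)} - (A_0+i)^{-(p+1)}$ depend continuously on $s$ in $\cL_1$-norm (this is exactly Remark~\ref{rem_norm_dif_res_independent} with $j>p$), and $A_s \to A_{s'}$ in the strong resolvent sense as $s\to s'$. Then Theorem~\ref{DOI_converge_pointwise} gives $f(A_s) - f(A_0) \to f(A_{s'}) - f(A_0)$ in $\cL_1(\cH)$, i.e. $e^{-tA_s^2} - e^{-tA_{s'}^2} \to 0$ in $\cL_1(\cH)$ as $s\to s'$. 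Wait — this would only give continuity of $s\mapsto e^{-tA_s^2}$ as an $\cL_1$-valued map, which is stronger than needed but also suspicious since $e^{-tA_0^2}$ itself is typically not trace class. The correct application is to note $e^{-tA_s^2} - e^{-tA_{s'}^2} \in \cL_1$ need not hold; rather, I should keep $B$ as a factor throughout.

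So the cleaner strategy: fix $s_0$, and write $e^{-tA_s^2}B = g(A_s)\cdot h(A_s)B$ where $h(x) = (x^2+1)^{-(p+1)/2}$ and $g(x) = e^{-tx^2}(x^2+1)^{(p+1)/2} \in C_b(\bbR)$ with bounded derivatives. Then
\begin{align*}
e^{-tA_s^2}B - e^{-tA_{s_0}^2}B
&= \big(g(A_s) - g(A_{s_0})\big)h(A_{s_0})B + g(A_s)\big(h(A_s)B - h(A_{s_0})B\big).
\end{align*}
The second term tends to $0$ in $\cL_1$-norm by Lemma~\ref{so_inLp} (with $g(A_s)\to g(A_{s_0})$ strongly, using strong resolvent convergence and $g$ bounded continuous) combined with $h(A_s)B \to h(A_{s_0})B$ in $\cL_1$-norm, the latter being Theorem~\ref{thm_trace_class_approx}/Remark~\ref{rem_norm_dif_res_independent} applied to the difference $h(A_s)B = [(A_s^2+1)^{-(p+1)/2} - (A_0^2+1)^{-(p+1)/2}]B + (A_0^2+1)^{-(p+1)/2}B$, where the first bracket is $\cL_1$-continuous in $s$ and the second is a fixed $\cL_1$ operator. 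For the first term, $(g(A_s)-g(A_{s_0}))\to 0$ strongly and $h(A_{s_0})B\in\cL_1$, so again Lemma~\ref{so_inLp} gives $\cL_1$-convergence to $0$. This proves continuity, and the final assertion about $\int_0^1 \tr(e^{-tA_s^2}B)\,ds$ follows since $s\mapsto \tr(e^{-tA_s^2}B)$ is then continuous, hence integrable, on $[0,1]$ (using $|\tr(\cdot)|\le\|\cdot\|_1$).

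The main obstacle I anticipate is bookkeeping the factorization so that $B$ (or a relative-resolvent-smoothed version of it) is always present to supply trace-class-ness — one must resist the temptation to isolate $e^{-tA_s^2}$ alone, which is not trace class. A secondary technical point is justifying $h(A_s)B \in \cL_1$ and its $s$-continuity: this needs the observation that $sB$ inherits the $p$-relative trace class property from $B$ (trivially, since $\cL_1$ is an ideal) together with the already-proven Theorem~\ref{thm_trace_class_approx} for the parametrized family, which is precisely what Remark~\ref{rem_norm_dif_res_independent} records.
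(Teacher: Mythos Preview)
Your proposal is correct, but the route you settle on for continuity is genuinely different from the paper's. Ironically, the approach you abandoned as ``suspicious'' is exactly what the paper does: since $(A_{s_1}-a_ji)^{-p_0}-(A_{s_2}-a_ji)^{-p_0}\in\cL_1(\cH)$ by Theorem~\ref{thm_trace_class_approx}, Proposition~\ref{prop_final_DOI} gives the representation
\[
e^{-tA_{s_1}^2}-e^{-tA_{s_2}^2}=\sum_{j=1,2}T_{f,a_j}^{A_{s_1},A_{s_2}}\big((A_{s_1}-a_ji)^{-p_0}-(A_{s_2}-a_ji)^{-p_0}\big)\in\cL_1(\cH),
\]
and then Remark~\ref{rem_norm_dif_res_independent} plus Theorem~\ref{DOI_converge_pointwise} (pointwise convergence of the DOI operators) drive this to $0$ in $\cL_1$; multiplying by the bounded operator $B$ finishes it. Your worry was misplaced: the \emph{difference} $e^{-tA_{s_1}^2}-e^{-tA_{s_2}^2}$ is trace class even though neither term alone is. Your alternative factorization $e^{-tA_s^2}B=g(A_s)\cdot h(A_s)B$ with $g$ bounded continuous and $h(A_s)B\in\cL_1$ continuous in $s$ is perfectly valid and more elementary, since it avoids the double-operator-integral machinery and relies only on strong resolvent convergence and Lemma~\ref{so_inLp}; in fact the paper uses essentially this factorization in the proof of the \emph{next} result, Proposition~\ref{prop_conv_integral_exp}. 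One small cleanup: take $h(x)=(x+i)^{-(p+1)}$ rather than $(x^2+1)^{-(p+1)/2}$, so that Remark~\ref{rem_norm_dif_res_independent} applies verbatim to give $\cL_1$-continuity of $s\mapsto h(A_s)B$ without an extra functional-calculus reduction.
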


\begin{proof}
Firstly we  show that the operator $e^{-tA_s^2}B$ is a trace class operator for any fixed $s\in [0,1]$. It is sufficient to show that  the operator
$(A_s+i)^{-p-1}B$ is a trace-class operator.
We may write 
\begin{align*}
(A_s+i)^{-p-1}B=\Big((A_s+i)^{-p-1}-(A_0+i)^{-p-1}\Big)B+(A_0+i)^{-p-1}B.
\end{align*}
As $B$ is a $p$-relative trace-class perturbation of $A_0$ it follows that  the second term on the right-hand side is a trace-class operator. For the first term, Theorem \ref{thm_trace_class_approx} implies the operator $(A_s+i)^{-p-1}-(A_0+i)^{-p-1}$ is a trace-class operator. Hence, $(A_s+i)^{-p-1}B\in\cL_1(\cH)$ for any $s\in[0,1]$, as required.

Now, we prove that the mapping $s\mapsto e^{-tA_s^2}B$ is continuous in $\cL_1(\cH)$-norm. Let $s_1,s_2\in [0,1]$. We set 
$p_0=2\lfloor \frac p2\rfloor+1.$
By Theorem \ref{thm_trace_class_approx}  the operators $A=A_{s_1}$ and $B=A_{s_2}$ satisfy the assumption of Proposition~\ref{prop_final_DOI}. Therefore, we have 
\begin{align}\label{diff_heat_via_resolvents}
\begin{split}
e^{-tA_{s_1}^2}B-e^{-tA_{s_2}^2}B=\sum_{j=1,2}T_{f,{a_j}}^{A_{s_1},A_{s_2}}\Big({(A_{s_1}-a_j i)^{-p_0}-(A_{s_2}-a_j i)^{-p_0}}\Big)\cdot B,
\end{split}
\end{align}
where $f(x)=e^{-tx^2},x\in\bbR, t>0$. 

By Remark \ref{rem_norm_dif_res_independent} we have that 
$$\Big\|(A_{s_1}-a_j i)^{-p_0}-(A_{s_2}-a_j i)^{-p_0}\Big\|_1\to 0,\quad \text{as } s_1-s_2\to 0.$$
Furthermore, by Theorem \ref{DOI_converge_pointwise} the double operator integral $T_{f,{a_j}}^{A_{s_1},A_{s_2}},$ $j=1,2$,  converges pointwise on $\cL_1(\cH)$ to $T_{f,{a_j}}^{A_{s_1},A_{s_1}},$ as $s_2\to s_1$. Therefore, 
\begin{align*}
\Big\|T_{f,{a_j}}^{A_{s_1},A_{s_2}}&\Big((A_{s_1}-a_j i)^{-p_0}-(A_{s_2}-a_j i)^{-p_0}\Big)\Big\|\\
&\leq \Big\|\big(T_{f,{a_j}}^{A_{s_1},A_{s_2}}-T_{f,{a_j}}^{A_{s_1},A_{s_1}}\big)\Big((A_{s_1}-a_j i)^{-p_0}-(A_{s_2}-a_j i)^{-p_0}\Big)\Big\|\\
&\qquad+\Big\|T_{f,{a_j}}^{A_{s_1},A_{s_1}}\Big((A_{s_1}-a_j i)^{-p_0}-(A_{s_2}-a_j i)^{-p_0}\Big)\Big\|\\
&\to 0, \quad s_1-s_2\to 0, \quad j=1,2.
\end{align*}
Thus, equality \eqref{diff_heat_via_resolvents} implies the required result:
$$\|e^{-tA_{s_1}^2}B-e^{-tA_{s_2}^2}B\|_1\to 0,\quad s_1-s_2\to 0.$$
\end{proof}

To conclude this section we prove that the integral in Proposition \ref{principla trace formula_right-hand side_well_defined} can be approximated by a similar integral with $B$ replaced by $B_n$ (and so $A_s$ replaced by $A_{s,n}$).

\begin{proposition}\label{prop_conv_integral_exp}
We have that 
\begin{equation}\label{conv_right-hand side_principla trace formula}
\lim_{n\to\infty}\int_0^1\tr\Big(e^{-tA_{s,n}^2}B_n\Big)ds=\int_0^1\tr\Big(e^{-tA_{s}^2}B\Big)ds.
\end{equation}
\end{proposition}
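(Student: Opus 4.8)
The plan is to reduce \eqref{conv_right-hand side_principla trace formula} to two ingredients: pointwise (in $s$) convergence of $\tr(e^{-tA_{s,n}^2}B_n)$ to $\tr(e^{-tA_s^2}B)$, together with a uniform (in $n$ and $s$) bound on $\|e^{-tA_{s,n}^2}B_n\|_1$, after which the dominated convergence theorem finishes the argument. So first I would establish the pointwise statement for each fixed $s\in[0,1]$. Following the proof of Proposition~\ref{principla trace formula_right-hand side_well_defined}, write, with $p_0=2\lfloor p/2\rfloor+1$ and $f(x)=e^{-tx^2}$,
\begin{align*}
e^{-tA_{s,n}^2}B_n-e^{-tA_s^2}B
&=\sum_{j=1,2}T^{A_{s,n},A_{s,n}}_{f,a_j}\Big((A_{s,n}-a_ji)^{-p_0}-(A_{s}-a_ji)^{-p_0}\Big)B\\
&\quad+\sum_{j=1,2}\Big(T^{A_{s,n},A_{s,n}}_{f,a_j}-T^{A_{s},A_{s}}_{f,a_j}\Big)\Big((A_{s}-a_ji)^{-p_0}-\cdots\Big)B_n\\
&\quad+\cdots+e^{-tA_s^2}(B_n-B),
\end{align*}
schematically; the clean way is to apply Proposition~\ref{prop_final_DOI} to the pair $(A_{s,n},A_s)$ directly to get $e^{-tA_{s,n}^2}-e^{-tA_s^2}=\sum_j T^{A_{s,n},A_s}_{f,a_j}\big((A_{s,n}-a_ji)^{-p_0}-(A_s-a_ji)^{-p_0}\big)$, then right-multiply by $B_n$ and add $e^{-tA_s^2}(B_n-B)$. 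The first term goes to $0$ in $\cL_1$ because $A_{s,n}\to A_s$ in the strong resolvent sense (Lemma~\ref{approx_A_n}(i), applied with $sB$ in place of $B$), so by Theorem~\ref{thm_trace_class_approx} the resolvent power differences $(A_{s,n}-a_ji)^{-p_0}-(A_s-a_ji)^{-p_0}$ tend to $0$ in $\cL_1$, the operators $T^{A_{s,n},A_s}_{f,a_j}$ are uniformly bounded on $\cL_1$ (indeed converge pointwise on $\cL_1$ to $T^{A_s,A_s}_{f,a_j}$ by Theorem~\ref{DOI_converge_pointwise}), and $B_n$ is uniformly bounded. The term $e^{-tA_s^2}(B_n-B)$ requires a touch more care: write $e^{-tA_s^2}(B_n-B)=e^{-tA_s^2}(A_s+i)^{p+1}\cdot(A_s+i)^{-p-1}(B_n-B)$; the operator $e^{-tA_s^2}(A_s+i)^{p+1}$ is bounded by functional calculus, $(A_s+i)^{-p-1}B\in\cL_1$ by Proposition~\ref{principla trace formula_right-hand side_well_defined}, and $(A_s+i)^{-p-1}B_n=(A_s+i)^{-p-1}P_nBP_n\to(A_s+i)^{-p-1}B$ in $\cL_1$ by Lemma~\ref{so_inLp} since $P_n\to1$ strongly, giving convergence to $0$.

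Next I would upgrade these estimates to uniformity in $s\in[0,1]$ and $n\in\bbN$. The key point is that all the quantitative inputs are already uniform: Remark~\ref{rem_norm_dif_res_independent} provides $\sup_{n,s}\|(A_{s,n}-a_ji)^{-p_0}-(A_s-a_ji)^{-p_0}\|_1<\infty$ (and continuity in $s$), $\sup_n\|B_n\|\le\|B\|$, and the double operator integral operator norms $\|T^{A_{s,n},A_s}_{f,a_j}\|_{\cB(\cL_1)}$ are controlled by a constant depending only on $f\in S(\bbR)$ (via the $S(\bbR)$-norm of $f$, cf.\ Proposition~\ref{prop_final_DOI}/\cite{Ya05}), hence are bounded uniformly in $n$ and $s$. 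Likewise $\|e^{-tA_s^2}(A_s+i)^{p+1}\|\le\sup_{x\in\bbR}|e^{-tx^2}(x+i)^{p+1}|=:C_t<\infty$ is independent of $s$, and $\sup_{s,n}\|(A_s+i)^{-p-1}B_n\|_1<\infty$ again by Theorem~\ref{thm_trace_class_approx} and Lemma~\ref{so_inLp}. Chaining these through the identity above yields $\sup_{s\in[0,1],\,n\in\bbN}\|e^{-tA_{s,n}^2}B_n\|_1<\infty$, and in fact the same bookkeeping shows $s\mapsto e^{-tA_{s,n}^2}B_n$ is $\cL_1$-continuous on $[0,1]$ with a modulus of continuity uniform in $n$.

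Finally, with $g_n(s):=\tr\!\big(e^{-tA_{s,n}^2}B_n\big)$ and $g(s):=\tr\!\big(e^{-tA_s^2}B\big)$, we have $|g_n(s)|\le\|e^{-tA_{s,n}^2}B_n\|_1\le M$ for all $n$ and $s$ by the uniform bound, $g_n(s)\to g(s)$ pointwise on $[0,1]$ by the first paragraph (trace is continuous on $\cL_1$), and each $g_n$ is measurable (indeed continuous). Dominated convergence on the finite measure space $[0,1]$ then gives $\int_0^1 g_n(s)\,ds\to\int_0^1 g(s)\,ds$, which is exactly \eqref{conv_right-hand side_principla trace formula}. I expect the main obstacle to be the genuinely uniform control of the double operator integral norms $\|T^{A_{s,n},A_s}_{f,a_j}\|_{\cB(\cL_1)}$ together with the resolvent-difference norms simultaneously in $s$ and $n$ — this is where one must extract from \cite{Ya05} and Remark~\ref{rem_norm_dif_res_independent} that the bounds depend only on $\|f\|_{S(\bbR)}$ and on the uniformly controlled data, rather than on the individual operators; the rest is routine.
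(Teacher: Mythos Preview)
Your overall strategy --- pointwise (in $s$) $\cL_1$-convergence of $e^{-tA_{s,n}^2}B_n$ to $e^{-tA_s^2}B$, a uniform-in-$(n,s)$ trace-norm bound, then dominated convergence --- is exactly the paper's strategy, and your argument is essentially correct. The implementation, however, differs.

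For the pointwise step you split $e^{-tA_{s,n}^2}B_n-e^{-tA_s^2}B=(e^{-tA_{s,n}^2}-e^{-tA_s^2})B_n+e^{-tA_s^2}(B_n-B)$ and invoke the double operator integral representation (Proposition~\ref{prop_final_DOI}) together with uniform $\cB(\cL_1)$-bounds on $T^{A_{s,n},A_s}_{f,a_j}$ extracted from \cite{Ya05}. The paper bypasses DOI entirely here: it simply writes
\[
e^{-tA_{s,n}^2}B_n=(A_{s,n}+i)^{p+1}e^{-tA_{s,n}^2}\cdot(A_{s,n}+i)^{-p-1}B_n,
\]
observes that the first factor converges strongly (since $A_{s,n}\to A_s$ in strong resolvent sense and $x\mapsto(x+i)^{p+1}e^{-tx^2}$ is bounded continuous), and shows the second factor converges in $\cL_1$; Lemma~\ref{so_inLp} then gives the result directly. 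The same factorisation, with $\|(A_{s,n}+i)^{p+1}e^{-tA_{s,n}^2}\|\le\sup_x|(x+i)^{p+1}e^{-tx^2}|$ independent of $n,s$, also yields the uniform bound without any appeal to DOI operator norms. So your route works but carries more baggage than needed; the paper's factorisation is the cleaner argument and avoids the ``main obstacle'' you flag at the end.

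One step in your write-up is too quick: you claim $(A_s+i)^{-p-1}B_n=(A_s+i)^{-p-1}P_nBP_n\to(A_s+i)^{-p-1}B$ in $\cL_1$ ``by Lemma~\ref{so_inLp} since $P_n\to1$ strongly''. Lemma~\ref{so_inLp} needs a trace-class core, but $P_n$ does not commute with $(A_s+i)^{-1}$, so there is no evident trace-class middle piece in that product. The fix (which is what the paper does for the analogous term $(A_{s,n}+i)^{-p-1}B_n$) is to route through $A_0$: write
\[
(A_s+i)^{-p-1}B_n=\big[(A_s+i)^{-p-1}-(A_0+i)^{-p-1}\big]B_n+P_n(A_0+i)^{-p-1}BP_n,
\]
and now Lemma~\ref{so_inLp} applies legitimately to each summand (using Theorem~\ref{thm_trace_class_approx} for the first bracket and $(A_0+i)^{-p-1}B\in\cL_1$ for the second). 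The same decomposition gives the uniform bound $\sup_{s,n}\|(A_s+i)^{-p-1}B_n\|_1<\infty$ that you assert.
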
 
\begin{proof}
Let $s\in[0,1]$ be fixed. { We write
$$e^{-tA_{s,n}^2}B_n=(A_{s,n}+i)^{p+1}e^{-tA_{s,n}^2} \cdot (A_{s,n}+i)^{-p-1}B_{n}.$$
Since $A_{s,n}\to A_s$ in the strong resolvent sense (see Lemma \ref{approx_A_n}) and the function $x\mapsto e^{-tx^2}(x+i)^{p+1}, x\in\bbR,$ is continuous and bounded, \cite[Theorem VIII.23]{RS_book_I} implies that $(A_{s,n}+i)^{p+1}e^{-tA_{s,n}^2}\to (A_{s}+i)^{p+1}e^{-tA_{s}^2}$} in the strong operator topology. Hence, by Lemma~\ref{so_inLp}, to prove the convergence 
\begin{equation}\label{conv_heat_wrt_n}
\lim_{n\to\infty} \Big\|e^{-tA_{s,n}^2}B_n-e^{-tA_{s}^2}B\Big\|_1=0
\end{equation}
 it is sufficient to show that 
\begin{equation}\label{conv_heat_via_resolvent}
\lim_{n\to\infty}\Big\|(A_{s,n}+i)^{-p-1}B_{n}-(A_{s}+i)^{-p-1}B\Big\|_1=0.
\end{equation}

 To prove  \eqref{conv_heat_via_resolvent} we  write 
$$ (A_{s,n}
+i)^{-p-1}B_{n}
 =\Big((A_{s,n}+i)^{-p-1}-(A_{0}+i)^{-p-1}\Big)B_{n}+P_n(A_{0}+i)^{-p-1}BP_n
 $$
From Theorem \ref{thm_trace_class_approx} the difference
$\Big((A_{s,n}+i)^{-p-1}-(A_0+i)^{-p-1}\Big)$ converges to $\Big((A_{s}+i)^{-p-1}-(A_0+i)^{-p-1}\Big)$ in $\cL_{1}(\cH)$. Moreover,
combining the assumption that ${(A_0+i)^{-p-1}}B\in\cL_1(\cH)$ with the strong operator convergence $P_n\to 1$  and with Lemma \ref{so_inLp},  we obtain  
$P_n{(A_0+i)^{-p-1}}BP_n\to {(A_0+i)^{-p-1}}B$
in $\cL_1(\cH)$. Hence, 
{ for every fixed $s\in[0,1]$, we conclude that the sequence $\{(A_{s,n}+i)^{-p-1}B_{n}\}_{n\in\bbN}$ converges to $(A_{s}+i)^{-p-1}B$ in $\cL_1(\cH)$, which suffices to prove \eqref{conv_heat_wrt_n}.}

We claim that the sequence of functions
$s\mapsto \|e^{-tA_{s,n}^2}B_n\|_1,\quad s\in[0,1],\quad t>0,$
is uniformly bounded (with respect to $n$) by a continuous function.
Indeed, we have 
\begin{align*}
\|e^{-tA_{s,n}^2}B_n\|_1&\leq \|(A_{s,n}+i)^{p+1}e^{-tA_{s,n}^2}\|\cdot \|(A_{s,n}+i)^{-p-1}B_n\|_1\\
&\leq \const \|\big((A_{s,n}+i)^{-p-1}-(A_0+i)^{-p-1}\big)B_n\|_1\\
&\qquad +\const \|(A_0+i)^{-p-1}B_n\|_1,
\end{align*}
where the constant is independent of $s$ and $n$. 

By Remark \ref{rem_norm_dif_res_independent} the first term in the previous inequality involves a sequence of functions uniformly majorised by a  continuous function. As $B_n=P_nBP_n$, the second term is uniformly majorised by a constant: 
$\const \|(A_0+i)^{-p-1}B\|_1.$

Thus, appealing to  \eqref{conv_heat_wrt_n} and the dominated convergence theorem we infer that 
\begin{equation*}
\lim_{n\to\infty}\int_0^1\tr\Big(e^{-tA_{s,n}^2}B_n\Big)ds=\int_0^1\tr\Big(e^{-tA_{s}^2}B\Big)ds.
\end{equation*}
\end{proof}

\section{The setting for our main result}\label{ch_approximation}

In the present section we introduce the set-up for the rest of the paper as well as explain the  approximation scheme we employ in our approach. The idea is to introduce a spectral `cut-off' by the characteristic function of the interval $[-n,n]$ for positive integral $n$.
We use the subscript $n$ on the operators introduced in the current section  to indicate these spectrally cut-off or `reduced' versions.

 Our paths are restricted by the final Hypothesis  \ref{hyp_final}. In this introductory discussion however we will work under the less restrictive Hypothesis \ref{hyp_initial}.

\begin{hypothesis} \label{hyp_initial} 
$(i)$ Assume $A_-$ is self-adjoint on $\dom(A_-) \subseteq \cH$. \\
$(ii)$ Suppose we have a family of bounded self-adjoint operators 
$\{B(t)\}_{t \in \bbR} \subset \cB(\cH)$, continuously differentiable with respect to $t$ in the uniform operator norm,  
such that 
\begin{equation}
\|B'(\cdot)\| \in L^1(\bbR)\cap L^\infty(\bbR).     \label{intB'}
\end{equation}
$(iii)$ Suppose that for some $p\in\bbN\cup\{0\}$ we have 
\begin{equation}\label{def_p_relative}
B'(t)(A_-+i)^{-p-1}\in\cL_1(\cH),\quad \int_\bbR \|B'(t)(A_-+i)^{-p-1}\|_1dt<\infty.
\end{equation}
In what follows, we always choose the smallest $p\in\bbN\cup\{0\}$ which satisfies \eqref{def_p_relative}. 
\end{hypothesis}

Given Hypothesis \ref{hyp_initial} we introduce the family of self-adjoint operators 
$A(t)$, $t \in \bbR$, in $\cH$, by 
\begin{equation*}
A(t) = A_- + B(t), \quad \dom(A(t)) = \dom(A_-), \quad t \in \bbR.
\end{equation*}
Writing
\begin{equation}
\label{Bt_as_integral}
B(t) = B(t_0) + \int_{t_0}^t  B'(s)\, ds, \quad t, t_0 \in \bbR, 
\end{equation}
with the convergent Bochner integral on the right-hand side, we conclude that the self-adjoint asymptotes (with respect to the operator norm)
\begin{equation}\label{B_pm}
\lim_{t \to \pm \infty} B(t) := B_{\pm} \in \cB(\cH)     
\end{equation}  
exist. Purely for convenience of notation, we will make the choice
$B_- = 0$
in the following and also introduce the asymptote, 
\begin{equation*}
A_+ = A_- + B_+, \quad \dom(A_+) = \dom(A_-).    
\end{equation*}
Assumption \eqref{intB'}  and equality \eqref{Bt_as_integral} also yield,
\begin{equation}
\sup_{t \in \bbR} \|B(t)\| \leq \int_{\bbR}  \|B'(t)\| dt< \infty.     \label{supB} 
\end{equation}

A simple application of the resolvent identity yields (with $t \in \bbR$, 
$z \in \bbC \backslash \bbR$)
\begin{align*}
& (A(t) - z I)^{-1} = (A_{\pm} - z I)^{-1} 
- (A(t) - z I)^{-1} [B(t) - B_{\pm}] (A_{\pm} - z I)^{-1},   \\
& \big\|(A(t) - z I)^{-1} - (A_{\pm} - z I)^{-1}\big\| \leq 
|\Im(z)|^{-2} \|B(t) - B_{\pm}\|,
\end{align*}
and hence proves that 
$\lim_{t \to \pm \infty} (A(t) - z I)^{-1} = (A_{\pm} - z I)^{-1}, \quad 
z \in \bbC \backslash \bbR$
with respect to the operator norm $\|\cdot\|$. 

Repeating  the argument of \cite[(3.49)]{GLMST}
one can prove that 
\begin{equation}\label{B_+_p-relative}
B_+(A_-+i)^{-1-p},\quad B(t)(A_-+i)^{-1-p}\in\mathcal{L}_1(\cH),
\end{equation}
that is, $B_+$ as well as the family $\{B(t)\}_{t\in\bbR}$ are $p$-relative trace-class perturbations with respect to $A_-$. In particular, results of Section \ref{ch_prelim} apply to the perturbations $B_+$ and $B(t), t\in\bbR$, of the operator $A_-$. 

As the next step in this section, we introduce the key technical ideas that enable us to use the old results of \cite{GLMST} in an approximation
scheme.
As in Section \ref{ch_prelim} we introduce  a spectral `cut-off' of the operator $A_-$ by setting
$P_n=\chi_{[-n,n]}(A_-).$

Let $\{B(t)\}_{t\in\bbR}$ be a one parameter family of perturbations of $A_-$ satisfying Hypothesis~\ref{hyp_initial_+}.
We introduce the family $\{B_n(t)\}_{t\in\bbR}$, $n\in\bbN$,  of reduced operators by setting
\begin{equation*}
B_n(t):=P_nB(t)P_n,\quad t\in\bbR, n\in\bbN.
\end{equation*}
In this case, 
\begin{align} \label{Ant}
\begin{split} 
& A_n(t):= A_- + B_n(t), \quad \dom(A_n(t)) = \dom(A_-), \quad n \in \bbN, \; t \in \bbR. 
\end{split} 
\end{align}

In particular, one concludes that 
\begin{equation}\label{B+n}
B_{+,n}:=\lim_{t\to+\infty} B_n(t)=P_nB_+P_n, 
\end{equation}
in the uniform operator norm. Therefore for the reduced asymptotes $A_{+,n}$, constructed with the family $\{B(t)\}_{t\in\bbR}$ replaced by $\{B_n(t)\}_{t\in\bbR}$, we obtain 
\begin{align*}
A_{+,n}:= A_- + B_{+,n}=A_-+P_nB_+P_n, \quad \dom(A_{+,n}) = \dom(A_-).
\end{align*}
We note that the equality \eqref{B_+_p-relative} together with the definition of the projections $P_n$ implies that
$B_{+,n}=P_nB_+P_n\in \cL_1(\cH).$

The following proposition shows that the family $\{B_n(t)\}_{t\in\bbR}$ of `approximants' consists of trace-class operators, and so for this family the results of \cite{Pu08, GLMST, CGPST_Witten} hold. The proof of this proposition is a verbatim repetition of the proof of \cite[Proposition 2.3]{GLMST} and is therefore omitted. 
\begin{proposition}\label{red_sat_Push}
The family $\{B_n(t)\}_{t\in\bbR}$ consists of trace-class perturbations of $A_-$ and therefore satisfies the assumption in \cite{Pu08} and \cite{GLMST}.
\end{proposition}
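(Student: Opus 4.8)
The plan is to show that each $B_n(t) = P_n B(t) P_n$ is trace class and that the family retains the integrability and differentiability properties required by \cite{Pu08,GLMST}. First I would recall the only structural facts about the cut-offs that matter (cf. Remark \ref{rem_exact_P_n_immaterial}): $\slim_{n\to\infty} P_n = 1$, $\sup_n \|P_n\| \leq 1$, and — the decisive point — $P_n B(t) P_n \in \cL_1(\cH)$ for each fixed $n$. The last is obtained by inserting resolvents of $A_-$: write
\begin{equation*}
B_n(t) = P_n B(t) P_n = \big[P_n (A_-+i)^{p+1}\big]\,\big[(A_-+i)^{-p-1} B(t)\big]\,\big[P_n\big],
\end{equation*}
where $P_n(A_-+i)^{p+1} = (A_-+i)^{p+1}\chi_{[-n,n]}(A_-)$ is a \emph{bounded} operator by spectral calculus (its norm is at most $(n^2+1)^{(p+1)/2}$), the middle factor is trace class by \eqref{B_+_p-relative} (equivalently \eqref{def_p_relative} applied to $B(t)$ rather than $B'(t)$), and $P_n$ is bounded; hence the product lies in $\cL_1(\cH)$.

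Next I would verify that the hypotheses of \cite{GLMST} hold for the family $\{B_n(t)\}$ with the trace-class parameter, i.e.\ the analogue of Hypothesis \ref{hyp_initial} with $p=0$. Uniform-norm continuous differentiability of $t\mapsto B_n(t)$ is immediate from that of $t\mapsto B(t)$ together with $B_n'(t) = P_n B'(t) P_n$ and $\|B_n'(t)\| \leq \|B'(t)\|$, which also gives $\|B_n'(\cdot)\| \in L^1(\bbR)\cap L^\infty(\bbR)$ from \eqref{intB'}. For the trace-class integrability condition, the same resolvent-insertion trick gives
\begin{equation*}
B_n'(t) = P_n B'(t) P_n = \big[P_n(A_-+i)^{p+1}\big]\,\big[(A_-+i)^{-p-1}B'(t)\big]\,P_n,
\end{equation*}
whence $\|B_n'(t)\|_1 \leq (n^2+1)^{(p+1)/2}\,\|(A_-+i)^{-p-1}B'(t)\|_1$, and integrating over $\bbR$ and using \eqref{def_p_relative} shows $\int_\bbR \|B_n'(t)\|_1\, dt < \infty$. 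Thus for each fixed $n$ the family $\{B_n(t)\}_{t\in\bbR}$ satisfies the relative-trace-class hypothesis of \cite{Pu08,GLMST,CGPST_Witten} (indeed with the sharper exponent $1$ in place of $p+1$), and in particular the asymptote $B_{+,n} = P_n B_+ P_n$ is trace class, as already noted in \eqref{B+n} and the line following it.

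Since, as the excerpt states, this is a verbatim repetition of \cite[Proposition 2.3]{GLMST}, there is no genuine obstacle; the only point requiring a moment's care is the boundedness of $P_n(A_-+i)^{p+1}$, which rests on the spectral truncation $\chi_{[-n,n]}(A_-)$ confining the spectrum to a compact set so that the otherwise unbounded factor $(A_-+i)^{p+1}$ acts boundedly on its range. Everything else is bookkeeping with the ideal property of $\cL_1(\cH)$, Hölder's inequality, and the elementary observation that compressing by a contraction can only decrease Schatten norms.
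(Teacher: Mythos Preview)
Your proof is correct and complete. The paper itself omits the proof entirely, stating only that it is a verbatim repetition of \cite[Proposition 2.3]{GLMST}; your argument---inserting $(A_-+i)^{p+1}(A_-+i)^{-p-1}$ and using that $P_n(A_-+i)^{p+1}$ is bounded by spectral calculus---is precisely the natural one and is almost certainly what that reference does. One cosmetic point: you mention H\"older's inequality in the final paragraph but never actually invoke it; the argument rests only on the ideal property of $\cL_1(\cH)$ and the boundedness of the spectral cut-off factors.
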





By $L^2(\bbR,\cH)$ we denote the Hilbert space of all $\cH$-valued Bochner square integrable function on $\bbR$.
Linear operators in the Hilbert space  $L^2(\bbR; \cH)$, will be denoted by calligraphic boldface symbols of the type $\bsT$, to distinguish them from 
operators $T$ in $\cH$. In particular, operators denoted by 
$\bsT$ in the Hilbert space $L^2(\bbR;\cH)$ typically represent operators associated with a 
family of operators $\{T(t)\}_{t\in\bbR}$ in $\cH$, defined by
\begin{align}
&(\bsT f)(t) = T(t) f(t) \, \text{ for a.e.\ $t\in\bbR$,}    \no \\
& f \in \dom(\bsT) = \big\{g \in L^2(\bbR;\cH) \,\big|\,
g(t)\in \dom(T(t)) \text{ for a.e.\ } t\in\bbR;    \lb{1.1}  \\
& \quad t \mapsto T(t)g(t) \text{ is (weakly) measurable;} \, 
\int_{\bbR} \|T(t) g(t)\|_{\cH}^2 \, dt <  \infty\bigg\}.   \no
\end{align}

Let $\bsA_-$ be the  operator acting in $L^2(\bbR;\cH)$ defined by  \eqref{1.1} with a constant fibre family $\{A_-(t)\}_{t\in\bbR}=\{A_-\}_{t\in\bbR}$. Similarly, let the operators $\bsA$, $\bsB, \bsA '=\bsB', $ be defined by \eqref{1.1} in terms of the families $\{A(t)\}_{t\in\bbR}$, $\{B(t)\}_{t\in\bbR}$, and $\{B'(t)\}_{t\in\bbR}$, respectively. 
Since $B(t), B'(t)$ are bounded operators for every $t\in\bbR$ and $\|B(\cdot)\|, \|B'(\cdot)\|\in L^\infty(\bbR)$ (see \eqref{supB} and Hypothesis \ref{hyp_initial}  (ii), respectively) we have that 
$\bsB, \, \bsB'\in\cB(L^2(\bbR;\cH)).$
Since, in addition, $A(t)=A_-+B(t),$
we infer that 
$$\bsA=\bsA_-+\bsB,\quad \dom(\bsA)=\dom(\bsA_-).$$


Now, to introduce the operator $\bsD_\bsA^{}$ in $L^2(\bbR;\cH)$, we recall that 
the operator $d/dt$ in $L^2(\bbR;\cH)$  is defined by 
\begin{align*}
& \bigg(\f{d}{dt}f\bigg)(t) = f'(t) \, \text{ for a.e.\ $t\in\bbR$,}    \no \\
& \, f \in \dom(d/dt) = \big\{g \in L^2(\bbR;\cH) \, \big|\,
g \in AC_{\loc}\big(\bbR; \cH\big), \, g' \in L^2(\bbR;\cH)\big\}   \no \\
& \hspace*{2.3cm} = W^{1,2} \big(\bbR; \cH\big).     
\end{align*} 
Then, the operator $\bsD_\bsA^{}$ is defined by setting 
\begin{equation}\label{D_A}
\bsD_\bsA^{} = \f{d}{dt} + \bsA,
\quad \dom(\bsD_\bsA^{})= W^{1,2}(\bbR; \cH) \cap \dom(\bsA_-).   
\end{equation}

Assuming Hypothesis \ref{hyp_initial} and repeating  the proof of \cite[Lemma 4.4]{GLMST} one can show that the operator
$\bsD_\bsA^{}$ is densely defined and closed in $L^2(\bbR; \cH)$. Furthermore,
the adjoint operator $\bsD_\bsA^*$ of $\bsD_\bsA^{}$ in $L^2(\bbR; \cH)$ is then given
by (cf.\ \cite{GLMST})
\begin{equation*}
\bsD_\bsA^*=- \f{d}{dt} + \bsA, \quad
\dom(\bsD_\bsA^*) = W^{1,2}(\bbR; \cH) \cap \dom(\bsA_-).   
\end{equation*}

This enables us to introduce the non-negative, self-adjoint operators 
$\bsH_j$, $j=1,2$, in $L^2(\bbR;\cH)$ by
\begin{equation}\label{def_Hj}
\bsH_1 = \bsD_{\bsA}^{*} \bsD_{\bsA}^{}, \quad 
\bsH_2 = \bsD_{\bsA}^{} \bsD_{\bsA}^{*}.
\end{equation}

The following result is proved in \cite[Theorem~2.6]{CGPST_Witten} under a relatively trace class perturbation assumption.  It was already noted in  \cite[Remark 2.7]{CGPST_Witten} that the result holds without this assumption. Thus, in our more general setting the following theorem holds. 

\begin{theorem}\label{t8.iff}
Assume Hypothesis \ref{hyp_initial}. Then the operator $\bsD_\bsA^{}$ is Fredholm if and only if  
$0 \in \rho(A_+) \cap \rho(A_-)$.
\end{theorem}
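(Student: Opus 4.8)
The plan is to reduce the Fredholm property of $\bsD_\bsA$ to a statement about the invertibility of the endpoints $A_\pm$, using the standard machinery for supersymmetric-type operators together with the asymptotic behaviour of $A(t)$ established above. First I would recall the well-known abstract criterion: the operator $\bsD_\bsA = d/dt + \bsA$ is Fredholm on $L^2(\bbR;\cH)$ if and only if both $\bsH_1 = \bsD_\bsA^* \bsD_\bsA$ and $\bsH_2 = \bsD_\bsA \bsD_\bsA^*$ have a spectral gap at $0$, i.e. $0$ is not in the essential spectrum of $\bsH_1$ (equivalently $\bsH_2$). This is where one would invoke the theory from \cite{CGPST_Witten,GLMST} — indeed the statement is quoted there — so one can cite \cite[Theorem~2.6, Remark~2.7]{CGPST_Witten} directly, the point being only to explain why the relative trace class hypothesis is not needed.

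The key steps, in order, are as follows. (1) Observe that $\dom(\bsD_\bsA) = \dom(\bsD_{\bsA}^*) = W^{1,2}(\bbR;\cH)\cap\dom(\bsA_-)$, which is already recorded in the excerpt, so $\bsD_\bsA$ is densely defined and closed and the operators $\bsH_j$ of \eqref{def_Hj} make sense. (2) Use the fact, coming from $\|B(\cdot)\| \in L^\infty(\bbR)$ and $\|B'(\cdot)\|\in L^1(\bbR)$, that $A(t)\to A_\pm$ in norm resolvent sense as $t\to\pm\infty$ (shown in the excerpt, with $A_-$ constant so only the $+\infty$ limit is nontrivial). This lets one relate the essential spectrum of $\bsD_\bsA^*\bsD_\bsA$ to the ``limiting'' operators $d/dt\otimes 1 + 1\otimes A_\pm$ at the two ends via a decoupling/localisation argument on $\bbR = (-\infty,0]\cup[0,\infty)$: splitting $L^2(\bbR;\cH)$ by a smooth partition of unity in $t$, the error terms are relatively compact because $B(t)-B_\pm\to 0$ in norm, so $\sigma_{\mathrm{ess}}(\bsD_\bsA^*\bsD_\bsA) = \sigma_{\mathrm{ess}}(\bsD_{\bsA_-}^*\bsD_{\bsA_-}) \cup \sigma_{\mathrm{ess}}(\bsD_{\bsA_+}^*\bsD_{\bsA_+})$. (3) For a constant-fibre operator $d/dt\otimes 1 + 1\otimes A_\pm$, Fourier transform in $t$ diagonalises $d/dt$ as multiplication by $i\lambda$, so $\bsD_{\bsA_\pm}^*\bsD_{\bsA_\pm}$ becomes the direct integral $\int^\oplus (\lambda^2 + A_\pm^2)\, d\lambda$, whose spectrum is $[0,\infty)$ if $0\in\sigma(A_\pm)$ and $[\operatorname{dist}(0,\sigma(A_\pm))^2,\infty)$ otherwise; in particular $0$ is an isolated point (in fact not an eigenvalue, by the $\lambda$-integration) of its spectrum precisely when $0\in\rho(A_\pm)$. (4) Combining, $0\notin\sigma_{\mathrm{ess}}(\bsH_1)$ iff $0\in\rho(A_+)\cap\rho(A_-)$, and then the abstract criterion of step (1) gives the Fredholm equivalence.

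The main obstacle I anticipate is step (2): making the decoupling argument rigorous requires care because $\bsA_-$ is unbounded, so one is comparing unbounded self-adjoint operators and must argue via resolvents (or via the explicit form $\bsD_\bsA^*\bsD_\bsA = -d^2/dt^2 + \bsA^2 + \bsB'$ — note $\bsB'\in\cB(L^2(\bbR;\cH))$ by Hypothesis \ref{hyp_initial}(ii)) and show that the commutator terms produced by a $t$-cutoff $\varphi$, namely $[\,d^2/dt^2,\varphi\,]$ and $\varphi(\bsA^2 - \bsA_\pm^2)$, are relatively compact with respect to the reference operator. The first is a first-order differential operator with compactly supported coefficients times $d/dt$ relative to $-d^2/dt^2$, hence relatively compact by standard Rellich-type arguments; the second is controlled because $B(t)-B_\pm \to 0$ in norm and $(A_-+i)^{-1}$ need not be compact but the $t$-decay of $\|B(t)-B_\pm\|$ combined with compactness in the $t$-variable of a compactly-supported multiplier suffices. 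Since, however, the statement is explicitly asserted in \cite{CGPST_Witten} to hold in exactly this generality, the cleanest route — and the one I would actually take in the paper — is simply to cite \cite[Theorem~2.6]{CGPST_Witten} together with \cite[Remark~2.7]{CGPST_Witten}, observing that their proof of the Fredholm dichotomy nowhere uses the relative trace class hypothesis and that our Hypothesis \ref{hyp_initial} supplies everything their argument requires, namely $\|B(\cdot)\|, \|B'(\cdot)\|\in L^\infty(\bbR)$, $\|B'(\cdot)\|\in L^1(\bbR)$, and norm resolvent convergence $A(t)\to A_\pm$.
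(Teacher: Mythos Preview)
Your proposal is correct, and the route you ultimately recommend---simply citing \cite[Theorem~2.6]{CGPST_Witten} together with \cite[Remark~2.7]{CGPST_Witten} and noting that the relative trace class hypothesis is not used in their Fredholm argument---is exactly what the paper does: it gives no proof at all, only that citation. Your additional sketch of the decoupling/Fourier-transform argument is a reasonable outline of what lies behind the cited result, but the paper does not reproduce any of it.
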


Next, we turn to the reduced counterparts $\bsH_{j,n}, j=1,2, n\in\bbN$, of the operators $\bsH_j, j=1,2$. 
Recall that the family $\{B_n(t)\}_{t\in\bbR}, n\in\bbN$ is defined by (see \eqref{B+n})
$$B_n(t)=P_nB(t)P_n, \quad P_n=\chi_{[-n,n]}(A_-).$$
In this case, the corresponding operator $\bsA_n$ is defined as 
$\bsA_n=\bsA_-+\bsB_n,$
where $\bsB_n$ is defined by \eqref{1.1} with $\{T(t)\}_{t\in\bbR}=\{B_n(t)\}_{t\in\bbR}$. 

Denote by $\bsH_{j,n}, \, j=1,2,$ the operator defined by \eqref{def_Hj} with $\bsD_\bsA$ replaced by the corresponding operator $\bsD_{\bsA_n}=\frac{d}{dt}+\bsA_n.$

\section{Some  uniform norm estimates }\label{sec_unifbound}

In this section we prove some uniform norm estimates for the operators $\bsH_j$, $j=1,2$. This result will be used in the proof of the second key result of this Section, Theorem~\ref{left-hand side_conver_in_L1}. The methods are borrowed from the abstract pseudo-differential calculus of non-commutative geometry \cite{CGRS_memoirs, CGPRS15, Connes1995}.

For future purposes we also introduce $\bsH_0$ in $L^2(\bbR; \cH)$ by  
\begin{equation}\label{def_H0}
\bsH_0 = - \f{d^2}{dt^2} + \bsA_-^2, \quad \dom(\bsH_0) 
= W^{2,2}(\bbR; \cH) \cap \dom\big(\bsA_-^2\big).     
\end{equation}
By \cite[Theorem VIII.33]{RS_book_I}, the operator $\bsH_0$ is self-adjoint and positive. We note that the operators $\bsA_-$ and $\bsH_0$ commute and 
\begin{equation}\label{domH_0^12}
\dom\bsH_0^{1/2}=\dom(d/dt)\cap \dom \bsA_-.
\end{equation}

The proof of the following result can be found in \cite[Lemma 4.7]{GLMST}. Observe, that the proof given there does not require the full strength of the assumptions made in that paper.
The statement is formulated using  Hypothesis \ref{hyp_initial}. In fact, it requires only Hypothesis \ref{hyp_initial} (i).

\begin{lemma}\label{lem_A_H_0_bounded}
\cite[Lemma 4.7]{GLMST} 
For every $z<0$, the operator $\bsA_-(\bsH_0-z)^{-1/2}$ is bounded and 
$\|{\bsA_-}{(\bsH_0-z)^{-1/2}}\|\leq 1,\quad z<0.$
\end{lemma}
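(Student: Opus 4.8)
\textbf{Proof plan for Lemma~\ref{lem_A_H_0_bounded}.}
The plan is to use the commutativity of $\bsA_-$ with $d/dt$ (hence with $\bsH_0 = -d^2/dt^2 + \bsA_-^2$) and a joint spectral calculus argument. First I would observe that, since $\bsA_-$ acts as a constant fibre family $\{A_-\}_{t\in\bbR}$ and $d/dt$ acts only on the $L^2(\bbR)$ variable, the two self-adjoint operators $d/dt$ and $\bsA_-$ strongly commute; consequently $\bsA_-^2$ and $-d^2/dt^2$ are strongly commuting non-negative self-adjoint operators, and $\bsH_0 = -d^2/dt^2 + \bsA_-^2$ is their (self-adjoint, non-negative) sum with $\dom(\bsH_0^{1/2}) = \dom(d/dt)\cap\dom(\bsA_-)$ as recorded in \eqref{domH_0^12}. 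This lets me work with the joint spectral measure of the pair $(-d^2/dt^2,\,\bsA_-^2)$.

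Next, for fixed $z<0$ I would write $\bsA_-(\bsH_0-z)^{-1/2}$ in terms of this joint spectral calculus. Using Fourier transform in the $t$-variable to diagonalise $d/dt$ (so $-d^2/dt^2$ becomes multiplication by $\xi^2$, $\xi\in\bbR$) and the spectral measure $E_{A_-}$ of $A_-$ to diagonalise $\bsA_-$ (so $\bsA_-$ becomes multiplication by $\lambda$), the operator $\bsH_0-z$ becomes multiplication by $\xi^2+\lambda^2-z$ and $\bsA_-(\bsH_0-z)^{-1/2}$ becomes multiplication by the scalar function $\lambda(\xi^2+\lambda^2-z)^{-1/2}$. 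The operator norm is therefore the sup of the absolute value of this symbol, i.e.
\begin{equation*}
\|\bsA_-(\bsH_0-z)^{-1/2}\| = \sup_{\xi\in\bbR,\ \lambda\in\sigma(A_-)}\frac{|\lambda|}{(\xi^2+\lambda^2-z)^{1/2}}.
\end{equation*}
Since $z<0$, for every $\xi,\lambda$ we have $\xi^2+\lambda^2-z \geq \lambda^2$, so each term is $\leq 1$, giving the bound $\|\bsA_-(\bsH_0-z)^{-1/2}\|\leq 1$. Boundedness (rather than just a bound on a core) follows because the symbol is globally bounded, so the associated multiplication operator is everywhere-defined and bounded.

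The only genuine subtlety — and the place to be slightly careful — is justifying the strong commutativity and the resulting joint-functional-calculus/direct-integral representation rigorously, rather than manipulating unbounded operators formally; this amounts to checking that the bounded resolvents of $d/dt$ and of $\bsA_-$ commute (immediate from the tensor-product structure $L^2(\bbR;\cH)\cong L^2(\bbR)\otimes\cH$, with $d/dt$ acting on the first factor and $A_-$ on the second), and that $(\bsH_0-z)^{-1/2}$ maps into $\dom(\bsA_-)$ so the product is densely defined before one extends by continuity. All of this is standard; since the paper explicitly remarks that the proof in \cite[Lemma 4.7]{GLMST} applies verbatim under Hypothesis~\ref{hyp_initial}(i) alone, I would simply cite that computation and reproduce the short symbol estimate above.
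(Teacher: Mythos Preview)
Your proposal is correct, and there is essentially nothing to compare: the paper does not supply its own proof of this lemma but simply cites \cite[Lemma~4.7]{GLMST}, noting that the argument there only uses Hypothesis~\ref{hyp_initial}(i). Your joint-spectral-calculus argument via the tensor-product structure and the symbol estimate $|\lambda|(\xi^2+\lambda^2-z)^{-1/2}\leq 1$ for $z<0$ is exactly the standard way to prove such a bound, and is almost certainly what the cited proof does as well.
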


In the context of Lemma \ref{lem_A_H_0_bounded}, we note also that 
\begin{equation}\label{d/dt_H_0_bounded}
\frac{d}{dt}\cdot (\bsH_0+1)^{-1/2}\in\cB(L^2(\bbR;\cH)).
\end{equation}

In what follows, we need to strengthen Hypothesis \ref{hyp_initial} as follows.
\begin{hypothesis} \label{hyp_initial_+} 
In addition to Hypothesis \ref{hyp_initial}, 
assume that $\dom(\bsH_0^{1/2})$ is invariant with respect to the operator $\bsB$.
\end{hypothesis}

Assuming Hypothesis \ref{hyp_initial_+} in the following, we have that 
$\bsA_-\bsB$ is an operator well defined on $\dom\bsH_0^{1/2}$, since 
$\dom\bsH_0^{1/2}\subset \dom\bsA_-$ (see \eqref{domH_0^12}).
Therefore, recalling that $\bsA=\bsA_-+\bsB$ one can decompose 
$\bsH_j$, $j=1,2$, as follows
\begin{align}\label{H_j_decomposition}
\begin{split}
 \bsH_j& = -\f{d^2}{dt^2} + \bsA^2 + (-1)^j \bsA^{\prime}    \\
& = \bsH_0 + \bsB \bsA_- + \bsA_- \bsB + \bsB^2 + (-1)^j \bsB^{\prime},  \\
& \dom(\bsH_j) = \dom(\bsH_0), \quad j =1,2. 
\end{split}
\end{align}

Using the standard resolvent identity, we obtain
$$(\bsH_j-z\bsI)^{-1}-(\bsH_0-z\bsI)^{-1}=-(\bsH_j-z\bsI)^{-1}(\bsH_j-{\bsH}_0)(\bsH_0-z\bsI)^{-1}$$
\begin{align}\label{ssss}=-(\bsH_j-z\bsI)^{-1}(\bsB\bsA_-+\bsA_-\bsB+\bsB^2+(-1)^j\bsB')(\bsH_0-z\bsI)^{-1},
\end{align}
for $j=1,2$ and $z\in\bbC\setminus\bbR_+$.

Assuming Hypothesis \ref{hyp_initial_+}, one obtains, similarly to the proof of \eqref{H_j_decomposition}, the decompositions for the operators $\bsH_{j,n}, \, j=1,2,$ of the following form
\begin{align*}
\begin{split}
& \bsH_{j,n} = \f{d^2}{dt^2} + \bsA_n^2 + (-1)^j \bsA_n^{\prime} \\
& \hspace*{8mm} = \bsH_0 + \bsB_n \bsA_- + \bsA_- \bsB_n 
+ \bsB_n^2 + (-1)^j \bsB_n^{\prime}, \\
& \dom(\bsH_{j,n}) = \dom(\bsH_0) = W^{2,2}(\bbR) \cap \dom\big(\bsA_-^2\big),  
\quad n \in \bbN, \;  j =1,2, 
\end{split} 
\end{align*}
with
$\bsB_n = \bsP_n \bsB \bsP_n, \quad 
\bsB_n^{\prime} = \bsP_n \bsB^{\prime} \bsP_n, \quad n \in \bbN,$
 $\bsP_n=\chi_{[-n,n]}(\bsA_-)=1\otimes P_n.$

The following result can be found in \cite[Lemma 3.12 (i)]{CGLS16}.
\begin{lemma}\label{approx_H_jn}
Assume Hypothesis \ref{hyp_initial_+}. The operators $\bsH_{j,n}$ converge to $\bsH_j$, $j=1,2$, in the strong resolvent sense.
\end{lemma}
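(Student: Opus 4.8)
\textbf{Plan for the proof of Lemma~\ref{approx_H_jn}.}

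The goal is to show $\bsH_{j,n}\to\bsH_j$ in the strong resolvent sense for $j=1,2$. Since the paper already provides the resolvent identity \eqref{ssss} together with its analogue for $\bsH_{j,n}$, the natural route is to compare both sides of these identities against the fixed, $n$-independent operator $\bsH_0$. Concretely, I would start from
\[
(\bsH_{j,n}-z\bsI)^{-1}-(\bsH_0-z\bsI)^{-1}=-(\bsH_{j,n}-z\bsI)^{-1}\big(\bsB_n\bsA_-+\bsA_-\bsB_n+\bsB_n^2+(-1)^j\bsB_n'\big)(\bsH_0-z\bsI)^{-1},
\]
and the corresponding formula for $\bsH_j$, and reduce the claim to showing that the right-hand sides converge strongly. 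Taking $z<0$ with $|z|$ large, I would split the ``middle'' factor into the four pieces $\bsB_n\bsA_-$, $\bsA_-\bsB_n$, $\bsB_n^2$, $\bsB_n'$ and treat each separately.

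The key technical point is to rewrite each piece so that the unbounded operator $\bsA_-$ is always tamed by a resolvent power of $\bsH_0$. For the term $\bsA_-\bsB_n(\bsH_0-z\bsI)^{-1}$ one uses Lemma~\ref{lem_A_H_0_bounded}: since $\dom\bsH_0^{1/2}\subseteq\dom\bsA_-$ with $\|\bsA_-(\bsH_0-z)^{-1/2}\|\le 1$, and $\bsH_0^{1/2}$-domain is $\bsB$-invariant (Hypothesis~\ref{hyp_initial_+}), write $\bsA_-\bsB_n(\bsH_0-z\bsI)^{-1}=\big(\bsA_-(\bsH_0-z)^{-1/2}\big)\big((\bsH_0-z)^{1/2}\bsB_n(\bsH_0-z)^{-1/2}\big)(\bsH_0-z)^{-1/2}$; here $\bsB_n=\bsP_n\bsB\bsP_n$ and $\bsP_n$ commutes with $\bsH_0$, so the bracketed middle operator equals $\bsP_n\big((\bsH_0-z)^{1/2}\bsB(\bsH_0-z)^{-1/2}\big)\bsP_n$, which is uniformly bounded in $n$ and converges strongly since $\bsP_n\to\bsI$ strongly. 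Similarly $\bsB_n\bsA_-(\bsH_0-z\bsI)^{-1}=\bsB_n(\bsH_0-z)^{-1/2}\cdot(\bsH_0-z)^{1/2}\bsA_-(\bsH_0-z\bsI)^{-1}$, and the second factor is a fixed bounded operator (again by Lemma~\ref{lem_A_H_0_bounded} and commutativity of $\bsA_-,\bsH_0$); the first factor is $\bsP_n\bsB\bsP_n(\bsH_0-z)^{-1/2}\to\bsB(\bsH_0-z)^{-1/2}$ strongly. The terms $\bsB_n^2(\bsH_0-z\bsI)^{-1}$ and $\bsB_n'(\bsH_0-z\bsI)^{-1}$ are easier since $\bsB_n,\bsB_n'$ are uniformly bounded and $\bsB_n\to\bsB$, $\bsB_n'\to\bsB'$ strongly (these follow from $\bsP_n\to\bsI$ strongly and $\|B_n(\cdot)\|,\|B_n'(\cdot)\|$ uniformly bounded). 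Thus the entire middle factor, applied to $(\bsH_0-z\bsI)^{-1}$, converges strongly to its $\bsH_j$-counterpart and is uniformly bounded in $n$.

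It remains to handle the outer factor $(\bsH_{j,n}-z\bsI)^{-1}$. By \eqref{H_j_decomposition}-type estimates (using the decomposition above for $\bsH_{j,n}$), for $z<0$ sufficiently negative $\|(\bsH_{j,n}-z\bsI)^{-1}\|$ is bounded uniformly in $n$; moreover, from the resolvent identity itself together with the strong convergence just established for the right-hand side, one gets that $(\bsH_{j,n}-z\bsI)^{-1}$ converges strongly to $(\bsH_j-z\bsI)^{-1}$ — this is a standard bootstrap: write $(\bsH_{j,n}-z)^{-1}=(\bsH_0-z)^{-1}+(\bsH_{j,n}-z)^{-1}(\text{middle}_n)(\bsH_0-z)^{-1}$, use uniform boundedness of the left factor and strong convergence of the right piece, and apply Lemma~\ref{so_inLp} (with the strongly convergent, uniformly bounded sequences) to pass to the limit, identifying the limit via the $\bsH_j$ resolvent identity. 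I expect the main obstacle to be the careful justification that $(\bsH_0-z)^{1/2}\bsB(\bsH_0-z)^{-1/2}$ is a genuinely bounded operator — this is exactly where Hypothesis~\ref{hyp_initial_+} (invariance of $\dom\bsH_0^{1/2}$ under $\bsB$) is indispensable, via the closed graph theorem applied to $\bsB$ as an operator on the Hilbert space $\dom\bsH_0^{1/2}$ equipped with its graph norm — and in verifying uniform-in-$n$ bounds on $(\bsH_{j,n}-z)^{-1}$, which requires choosing $|z|$ large enough that the perturbation series (or a KLMN-type argument) converges uniformly in $n$, using that all the reduced perturbations are dominated in the relevant sense by the unreduced ones because $\|\bsP_n\|=1$.
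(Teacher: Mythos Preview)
Your proposal is essentially correct, but note first that the paper does not actually prove this lemma: it simply cites \cite[Lemma~3.12\,(i)]{CGLS16}. So there is no ``paper's proof'' to compare against directly.

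That said, let me make two comments on your argument. First, a simpler route is available and fits well with tools already in the paper: since all of the operators $\bsH_{j,n}$ and $\bsH_j$ are self-adjoint with the common domain $\dom(\bsH_0)$ (this is \eqref{H_j_decomposition} and its $n$-analogue), it suffices by \cite[Theorem~VIII.25\,(a)]{RS_book_I} (used already in Lemma~\ref{approx_A_n}) to show $\bsH_{j,n}\xi\to\bsH_j\xi$ for every $\xi\in\dom(\bsH_0)$. The only nontrivial term is $\bsA_-(\bsB_n-\bsB)\xi$, and exactly the same closed-graph/Hypothesis~\ref{hyp_initial_+} reasoning you outline handles it: $\bsP_n\xi\to\xi$ in the graph norm of $\bsH_0^{1/2}$, $\bsB$ is bounded on that space, and $\bsA_-$ is bounded from $\dom(\bsH_0^{1/2})$ to $L^2(\bbR;\cH)$. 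This bypasses the resolvent-identity bootstrap entirely.

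Second, two minor points on your version. Your bootstrap ``$R_n=R_0-R_nM_n$, hence $R_n\to R$'' is not automatic from strong convergence $M_n\to M$ alone; you need to invert, writing $R_n=R_0(1+M_n)^{-1}$ for $|z|$ large enough that $\sup_n\|M_n\|<1$, and then pass to the limit via $(1+M_n)^{-1}-(1+M)^{-1}=(1+M_n)^{-1}(M-M_n)(1+M)^{-1}$. (Alternatively, use the direct second resolvent identity $R_n-R=R_n(\bsH_j-\bsH_{j,n})R$ together with the uniform bound $\|R_n\|\le |z|^{-1}$, which holds for \emph{every} $z<0$ since $\bsH_{j,n}\ge 0$ --- no need for $|z|$ large here.) Finally, your appeal to Lemma~\ref{so_inLp} is a slight misquotation: that lemma is about Schatten norms, whereas here you only need the elementary fact that a uniformly bounded strongly convergent sequence times a norm-convergent sequence is norm-convergent.
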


For $k\in\bbN$ we introduce
\begin{gather*}{\rm dom}(\delta_{\bsH_0}^k)=\{\bsT\in\cB(L^2(\bbR;\cH)):\bsT\dom(\bsH_0^{j/2})\subset \dom(\bsH_0^{j/2}), \forall j=1,\dots,k, \\
\quad \text{ and the operator } [(1+\bsH_0)^{1/2},\bsT]^{(k)}, \text{ defined on } {\rm dom}(\bsH_0^{k/2})\\
\quad \text{ extends to a bounded operator on } L^2(\bbR;\cH)\}.
\end{gather*}
and set 
\begin{equation}\label{def_deriv_bsH_0}
\delta_{\bsH_0}^k(\bsT)=\overline{[(1+\bsH_0)^{1/2},\bsT]^{(k)}},\quad \bsT\in\dom\delta_{\bsH_0}^k.
\end{equation}
where the notation $[(1+\bsH_0)^{1/2},\bsT]^{(k)}$ stands for $k$-th repeated commutator defined by
\begin{align*}
[(1+\bsH_0)^{1/2},T]^{(k)}&=[(1+\bsH_0)^{1/2},\dots[(1+\bsH_0)^{1/2}, [(1+\bsH_0)^{1/2},T]]\dots],\\
&\quad \dom([(1+\bsH_0)^{1/2},T]^{(k)})=\dom(\bsH_0^{k/2}).
\end{align*}
For convenience, we set 
$[(1+\bsH_0)^{1/2},\bsT]^{(0)}=\bsT.$

\begin{remark}\label{rem_dom_delta_0_subalgera}
Note that $\bigcap_{j=0}^k\dom(\delta_{\bsH_0}^j), k\in\bbN,$ is a  subalgebra in $\cB(L^2(\bbR;\cH))$.  $\diamond$
\end{remark}

We note that if
 $\bsT\in\dom(\delta_{\bsH_0})$, then for every $\xi\in\dom(\bsH_0^{1/2})$ we have
 \begin{align*}
\bsT(\bsH_0+1)^{1/2}\xi=(\bsH_0+1)^{1/2}\bsT\xi-[(\bsH_0+1)^{1/2},\bsT]\xi.
 \end{align*}
  Hence, if $\bsT\in\bigcap_{j=1}^k\dom(\delta_{\bsH_0}^j)$, for some $k\in\bbN$, then for every $\xi\in\dom(\bsH_0^{k/2})$, using this equality repeatedly, we obtain 
\begin{align}\label{big_BH_0}
\begin{split}
\bsT&(\bsH_0+1)^{k/2}\xi=\bsT(\bsH_0+1)^{1/2}(\bsH_0+1)^{\frac{k-1}2}\xi\\
&=(\bsH_0+1)^{1/2}\bsT(\bsH_0+1)^{\frac{k-1}2}\xi-[(\bsH_0+1)^{1/2},\bsT](\bsH_0+1)^{\frac{k-1}2}\xi\\
&=\dots\\
&=\sum_{j=0}^k(-1)^{j}C_k^j(\bsH_0+1)^{j/2}[(\bsH_0+1)^{1/2},\bsT]^{(k-j)}\xi,
\end{split}
\end{align}
where $C_k^j$ denotes the binomial coefficient. 

\begin{lemma}\label{big_estimate_on_norm_k=1}
Assume $\bsB\in\dom(\delta_{\bsH_0})$. Then the operator ${(\bsH_i+1)^{-1/2}}(\bsH_0+1)^{1/2}, i=1,2,$ defined on $\dom(\bsH_0^{1/2})$ extends to a bounded operator on $L^2(\bbR;\cH)$ and 
$$\Big\|\overline{{(\bsH_i+1)^{-1/2}}(\bsH_0+1)^{1/2}}\Big\|\leq\const\cdot (\|\delta_{\bsH_0}(\bsB)\|+\|\bsB\|+\|\bsB\|^2+\|\bsB'\|).$$
\end{lemma}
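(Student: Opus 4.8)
The plan is to express $(\bsH_i+1)^{-1/2}(\bsH_0+1)^{1/2}$ via the resolvent comparison \eqref{ssss} and to control all the resulting terms using Lemma~\ref{lem_A_H_0_bounded}, estimate \eqref{d/dt_H_0_bounded} and the assumption $\bsB\in\dom(\delta_{\bsH_0})$. First I would write, on the core $\dom(\bsH_0^{1/2})$,
\begin{equation*}
(\bsH_i+1)^{-1/2}(\bsH_0+1)^{1/2} = \bsI + \big[(\bsH_i+1)^{-1/2} - (\bsH_0+1)^{-1/2}\big](\bsH_0+1)^{1/2},
\end{equation*}
so that it suffices to bound the second summand. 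Squaring (or rather, using $(\bsH_i+1)^{-1} - (\bsH_0+1)^{-1}$ as in \eqref{ssss} with $z=-1$) produces
\begin{equation*}
(\bsH_i+1)^{-1} - (\bsH_0+1)^{-1} = -(\bsH_i+1)^{-1}\big(\bsB\bsA_- + \bsA_-\bsB + \bsB^2 + (-1)^i\bsB'\big)(\bsH_0+1)^{-1},
\end{equation*}
and the idea is to move one factor $(\bsH_0+1)^{1/2}$ to the right and one half-power worth of regularity onto $(\bsH_i+1)^{-1}$, reducing everything to boundedness of operators of the form $(\bsH_i+1)^{-1/2}(\text{something})(\bsH_0+1)^{-1/2}$.

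\textbf{Key steps.} (1) Handle $(\bsH_0+1)^{-1}(\bsH_0+1)^{1/2} = (\bsH_0+1)^{-1/2}$, which is trivially bounded, and reduce to showing $(\bsH_i+1)^{-1/2}$ times each of $\bsB\bsA_-$, $\bsA_-\bsB$, $\bsB^2$, $\bsB'$ times $(\bsH_0+1)^{-1/2}$ is bounded, after also extracting a preliminary factor $\overline{(\bsH_i+1)^{-1/2}(\bsH_0+1)^{1/2}}$ from the front — this is circular unless done carefully, so instead I would argue by first proving the a priori estimate that $(\bsH_i+1)^{-1/2}(\bsH_0+1)^{1/2}$ extends boundedly, and only then bound its norm. (2) For $\bsA_-\bsB$: write $(\bsH_i+1)^{-1/2}\bsA_-\bsB(\bsH_0+1)^{-1/2}$ and note $\bsA_-(\bsH_0+1)^{-1/2}$ is a contraction by Lemma~\ref{lem_A_H_0_bounded}; but the $\bsA_-$ sits on the wrong side, so commute it through $\bsB$ using $\bsA_-\bsB = \bsB\bsA_- + [\bsA_-,\bsB]$, and observe that since $(1+\bsH_0)^{1/2}$ and $\bsA_-$ differ by lower-order/commuting terms, $[\bsA_-,\bsB]$ is controlled by $\delta_{\bsH_0}(\bsB)$ (this uses \eqref{d/dt_H_0_bounded} to pass between $(1+\bsH_0)^{1/2}$ and $\bsA_-$ up to bounded error). (3) For $\bsB\bsA_-$: here $\bsB$ is bounded and $\bsA_-(\bsH_0+1)^{-1/2}$ is a contraction, but the $\bsA_-$ is on the wrong side of $(\bsH_0+1)^{-1/2}$ relative to $\bsH_0$; write $(\bsH_i+1)^{-1/2}\bsB\bsA_-(\bsH_0+1)^{-1/2}$ and use $\bsA_-(\bsH_0+1)^{-1/2} = (\bsH_0+1)^{-1/2}\bsA_-$ (they commute) so this is $\big((\bsH_i+1)^{-1/2}(\bsH_0+1)^{1/2}\big)\cdot(\bsH_0+1)^{-1/2}\bsB(\bsH_0+1)^{-1/2}\bsA_-$... again one must be careful not to reintroduce the unknown operator with norm $>1$. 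The cleanest route is: prove boundedness first by a soft argument (the operator is closed, densely defined, and its adjoint is densely defined, or by observing $\bsH_0 \le \const(\bsH_i+1)$ as a form inequality on $\dom(\bsH_0^{1/2})$, which follows from \eqref{H_j_decomposition} once one knows the cross terms are $\bsH_0$-form-bounded with relative bound $<1$ after the $\delta_{\bsH_0}$ hypothesis), and only afterwards extract the quantitative norm bound by tracking constants through \eqref{ssss}. (4) The terms $\bsB^2$ and $\bsB'$ are the easiest: they are bounded, so $(\bsH_i+1)^{-1/2}\bsB^2(\bsH_0+1)^{-1/2}$ and the analogous $\bsB'$ term are bounded by $\|\overline{(\bsH_i+1)^{-1/2}(\bsH_0+1)^{1/2}}\|\cdot(\|\bsB\|^2 + \|\bsB'\|)\cdot\|(\bsH_0+1)^{-1}\|$ once boundedness is known, contributing the $\|\bsB\|^2$ and $\|\bsB'\|$ terms in the claimed estimate.

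\textbf{Main obstacle.} The genuine difficulty is the circularity: the operator whose norm we want to bound appears on both sides of the natural identity, so I expect the real work to be in the form-boundedness step — showing that on $\dom(\bsH_0^{1/2})$ one has a coercivity estimate $\bsH_0 + 1 \le \const\,(\bsH_i + 1)$ (equivalently, the cross terms $\bsB\bsA_- + \bsA_-\bsB$ are $\bsH_0^{1/2}$-bounded with small relative bound after absorbing into $\bsH_0$), which is exactly where $\bsB\in\dom(\delta_{\bsH_0})$ is used: it guarantees $[(\bsH_0+1)^{1/2},\bsB]$ is bounded, hence $\bsA_-\bsB - \bsB\bsA_-$ is controlled, so the symmetrized cross term $\bsB\bsA_- + \bsA_-\bsB = 2\Re(\bsA_-\bsB)$ (up to the bounded commutator) is $\le \e\bsH_0 + C_\e$ by Cauchy--Schwarz with $\bsA_-(\bsH_0+1)^{-1/2}$ a contraction. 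Once that form inequality is in hand, $(\bsH_i+1)^{-1/2}(\bsH_0+1)^{1/2}$ is automatically bounded with norm $\le\const$, and then feeding this back into \eqref{ssss}–style bounds with the explicit contraction constants from Lemma~\ref{lem_A_H_0_bounded} yields the stated linear-in-$(\|\delta_{\bsH_0}(\bsB)\| + \|\bsB\| + \|\bsB\|^2 + \|\bsB'\|)$ estimate. I would also double-check that the case $i=1$ versus $i=2$ makes no difference, since the only change is the sign of $\bsB'$, which enters only through its norm.
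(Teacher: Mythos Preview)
Your proposal takes a genuinely different route from the paper. The paper avoids the circularity you identified by writing
\[
(\bsH_i+1)^{-1/2}=\frac{1}{\pi}\int_0^\infty \lambda^{-1/2}(1+\lambda+\bsH_i)^{-1}\,d\lambda,
\]
applying the resolvent identity \eqref{ssss} \emph{inside} the integral, and then using the elementary decay $\|(1+\lambda+\bsH_i)^{-1}\|\le(1+\lambda)^{-1}$ together with $\|(\bsH_0+1)^{1/2}(1+\lambda+\bsH_0)^{-1}\|\le(1+\lambda)^{-1/2}$ to make the $\lambda$-integral converge. The term $\bsA_-\bsB$ is handled not by commuting $\bsA_-$ through $\bsB$, but by inserting $(\bsH_0+1)^{-1/2}(\bsH_0+1)^{1/2}$ (which commutes with $\bsA_-$) and commuting $(\bsH_0+1)^{1/2}$ through $\bsB$; this is where $\delta_{\bsH_0}(\bsB)$ enters. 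No bootstrap or a priori boundedness is needed.

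Your form-inequality idea is a legitimate alternative and in fact simpler than you make it: the symmetric cross term satisfies
\[
|\langle(\bsB\bsA_-+\bsA_-\bsB)\xi,\xi\rangle|=2|\Re\langle\bsA_-\xi,\bsB\xi\rangle|\le \e\|\bsA_-\xi\|^2+\e^{-1}\|\bsB\|^2\|\xi\|^2\le \e\langle(\bsH_0+1)\xi,\xi\rangle + C_\e\|\xi\|^2
\]
directly from Lemma~\ref{lem_A_H_0_bounded}, with \emph{no} commutator hypothesis. Since $\dom(\bsH_i^{1/2})=\dom(\bsD_\bsA)=\dom(\bsH_0^{1/2})$ by \eqref{domH_0^12}, this already yields a quantitative bound on $\|(\bsH_i+1)^{-1/2}(\bsH_0+1)^{1/2}\|$ in terms of $\|\bsB\|,\|\bsB'\|$ alone, which of course implies the stated estimate. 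So your ``main obstacle'' dissolves, and the hypothesis $\bsB\in\dom(\delta_{\bsH_0})$ is not actually needed at the level of this lemma (it becomes essential only in the induction of Proposition~\ref{big_estimate_on_norm}).

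One point to correct: your claim that $[\bsA_-,\bsB]$ is controlled by $\delta_{\bsH_0}(\bsB)=[(1+\bsH_0)^{1/2},\bsB]$ because ``$(1+\bsH_0)^{1/2}$ and $\bsA_-$ differ by lower-order terms'' is not justified---the difference $(1-\tfrac{d^2}{dt^2}+\bsA_-^2)^{1/2}-|\bsA_-|$ is not bounded, so boundedness of one commutator does not imply boundedness of the other. The paper never needs $[\bsA_-,\bsB]$; neither does your form argument.
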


\begin{remark}Note that the first assertion in Lemma \ref{big_estimate_on_norm_k=1} follows immediately from the closed graph theorem and the fact that $\dom(\bsH_0)=\dom(\bsH_i), i=1,2$. However, we also need an estimate on the uniform norm of this operator. $\diamond$
\end{remark}
\begin{proof}
Since the operator $\bsH_1$ is self-adjoint and positive we can write
$${(\bsH_1+1)^{-1/2}}=\frac1\pi\int_0^\infty\frac{d\lambda}{\lambda^{1/2}}{(1+\lambda+\bsH_1)^{-1}},$$
with the  RHS being a convergent Bochner integral (see e.g. \cite[p. 282]{Kato}).

By the resolvent identity \eqref{ssss} 
 for all $\xi\in\dom(\bsH_0)^{1/2}$ we have
\begin{align}\label{big_k=1}
&{(\bsH_1+1)^{-1/2}}(\bsH_0+1)^{1/2}\xi\nonumber\\
&\quad=\xi+\frac1\pi\int_0^\infty\frac{d\lambda}{\lambda^{1/2}}
({1+\lambda+\bsH_1})^{-1}\bsA_-\bsB{(\bsH_0+1)^{1/2}}{(1+\lambda+\bsH_0)^{-1}}\xi\nonumber\\
&\quad\quad+\frac1\pi\int_0^\infty\frac{d\lambda}{\lambda^{1/2}}
({1+\lambda+\bsH_1})^{-1}\bsB{\bsA_-^{}}{(\bsH_0+1)^{1/2}}{(1+\lambda+\bsH_0)^{-1}}\xi\nonumber\\
&\quad\quad+\frac1\pi\int_0^\infty\frac{d\lambda}{\lambda^{1/2}}
({1+\lambda+\bsH_1})^{-1}(\bsB^2-\bsB'){(\bsH_0+1)^{1/2}}{(1+\lambda+\bsH_0)^{-1}}\xi\nonumber\\
&\quad = \xi+I_1\xi+I_2\xi+I_3\xi.
\end{align}

The estimates
$$\Big\|(\bsH_0+1)^{1/2}({1+\lambda+\bsH_0})^{-1}\Big\|\leq(1+\lambda)^{-1/2},\quad \Big\|({1+\lambda+\bsH_1})^{-1}\Big\|\leq(1+\lambda)^{-1},$$
together with the fact that the operators $\bsB,\bsB'$ are bounded imply that the integral $I_3$ on the right-hand side of \eqref{big_k=1} converges in the uniform norm and 
$$\|I_3\|\leq \const(\|\bsB\|^2+\|\bsB'\|).$$

Similarly for $I_2$, using in addition Lemma \ref{lem_A_H_0_bounded}, we have 
$$\Big\|{\bsA_-}{(\bsH_0+1)^{1/2}}{(1+\lambda+\bsH_0)^{-1}}\Big\|\leq(1+\lambda)^{-1/2},$$
guaranteeing that  the integral $I_2$ converges in the operator norm and 
$\|I_2\|\leq \const \cdot \|\bsB\|.$

Finally, for the integral $I_1$ we write 
\begin{align*}
I_1&=\frac1\pi\int_0^\infty\frac{d\lambda}{\lambda^{1/2}}
({1+\lambda+\bsH_1})^{-1}{\bsA_-}\bsB\, {(\bsH_0+1)^{1/2}}{(1+\lambda+\bsH_0)^{-1}}\xi\\
&=\frac1\pi\int_0^\infty\frac{d\lambda}{\lambda^{1/2}}
({1+\lambda+\bsH_1})^{-1}{\bsA_-}{(\bsH_0+1)^{-1/2}} \bsB{(\bsH_0+1)}{(1+\lambda+\bsH_0)^{-1}}\xi\\
&\quad+\frac1\pi\int_0^\infty\frac{d\lambda}{\lambda^{1/2}}
({1+\lambda+\bsH_1})^{-1}{\bsA_-}{(\bsH_0+1)^{-1/2}} \\
&\quad \quad\quad\quad\quad\quad\cdot[(\bsH_0+1)^{1/2},\bsB]\,{(\bsH_0+1)^{1/2}}{(1+\lambda+\bsH_0)^{-1}}\xi.
\end{align*}
Since, $\bsB\in\dom(\delta_{\bsH_0})$, the operator $[(\bsH_0+1)^{1/2},\bsB]$ extends to a bounded operator on $L^2(\bbR;\cH)$. Hence, repeating the argument above, we conclude that $I_1$ is a bounded operator with
$\|I_1\|\leq \const \cdot (\|\bsB\|+\|\delta_{\bsH_0}(\bsB)\|).$

Thus, by \eqref{big_k=1} we have that the operator ${(\bsH_0+1)^{-1/2}}(\bsH_1+1)^{1/2}$ extends to a bounded operator on $L^2(\bbR;\cH)$ and 
$$\Big\|\overline{{(\bsH_0+1)^{-1/2}}(\bsH_1+1)^{1/2}}\Big\|\leq\const(1+\|\delta_{\bsH_0}(\bsB)\|+\|\bsB\|+\|\bsB\|^2+\|\bsB'\|).$$
\end{proof}

The following result will be used later in the proof of the convergence of the left-hand side of the principal trace formula. 
\begin{proposition}\label{big_estimate_on_norm}Assume $\bsB,\bsB'\in\bigcap_{j=1}^{k-1}\dom(\delta_{\bsH_0}^j)$ for some $k\geq 2$. Then the operator ${(\bsH_i+1)^{-k/2}}(\bsH_0+1)^{k/2}, i=1,2,$ defined on $\dom(\bsH_0^{k/2})$ extends to a bounded operator on $L^2(\bbR;\cH)$ and 
$$\Big\|\overline{{(\bsH_i+1)^{-k/2}}(\bsH_0+1)^{k/2}}\Big\|\leq\const\cdot Q(\|\delta_{\bsH_0}^j(\bsB)\|,\|\delta_{\bsH_0}^j(\bsB')\|), $$
for all $j=0,\dots, k-1 $ and for some polynomial $Q$ with positive coefficients.
\end{proposition}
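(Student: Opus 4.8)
The plan is to prove Proposition \ref{big_estimate_on_norm} by induction on $k$, using Lemma \ref{big_estimate_on_norm_k=1} as the base case $k=1$ (or directly $k=2$ after one more step) and reducing the estimate for the exponent $k/2$ to the one for $(k-1)/2$. The inductive step rests on the factorization
\begin{equation*}
\overline{(\bsH_i+1)^{-k/2}(\bsH_0+1)^{k/2}}
= \overline{(\bsH_i+1)^{-1/2}(\bsH_0+1)^{1/2}}\cdot\overline{(\bsH_0+1)^{-1/2}(\bsH_i+1)^{1/2}}\cdot\overline{(\bsH_i+1)^{-(k-1)/2}(\bsH_0+1)^{(k-1)/2}},
\end{equation*}
but the middle factor goes the ``wrong way'', so this naive splitting is not quite right. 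Instead I would write, for $\xi\in\dom(\bsH_0^{k/2})$,
\begin{equation*}
(\bsH_i+1)^{-k/2}(\bsH_0+1)^{k/2}\xi = (\bsH_i+1)^{-1/2}\Big[(\bsH_i+1)^{-(k-1)/2}(\bsH_0+1)^{(k-1)/2}\Big](\bsH_0+1)^{1/2}\xi,
\end{equation*}
and then commute the bounded operator $\bsC_{k-1}:=\overline{(\bsH_i+1)^{-(k-1)/2}(\bsH_0+1)^{(k-1)/2}}$ (bounded by the inductive hypothesis, with the claimed polynomial bound in the lower-order derivatives) past the remaining factor $(\bsH_i+1)^{-1/2}$ from the left. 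Doing so introduces a commutator $[(\bsH_i+1)^{-1/2},\bsC_{k-1}]$, which one controls using the resolvent representation $(\bsH_i+1)^{-1/2}=\tfrac1\pi\int_0^\infty\lambda^{-1/2}(1+\lambda+\bsH_i)^{-1}\,d\lambda$ exactly as in the proof of Lemma \ref{big_estimate_on_norm_k=1}.

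More concretely, I expect the cleanest route is to mimic the proof of Lemma \ref{big_estimate_on_norm_k=1} verbatim but at the level of the $k/2$ power. Using the resolvent identity \eqref{ssss} iterated, or rather the Bochner integral representation
\begin{equation*}
(\bsH_i+1)^{-k/2} = c_k\int_0^\infty \lambda^{-k/2}\,(1+\lambda+\bsH_i)^{-1}\, (\text{something})\, d\lambda
\end{equation*}
one reduces matters to estimating, for each $\lambda>0$, expressions of the form
\begin{equation*}
(1+\lambda+\bsH_i)^{-1}\big(\bsB\bsA_- + \bsA_-\bsB + \bsB^2 + (-1)^i\bsB'\big)(\bsH_0+1)^{1/2}(1+\lambda+\bsH_0)^{-1}\cdot(\bsH_0+1)^{(k-1)/2}(1+\lambda+\bsH_0)^{-(k-1)/2},
\end{equation*}
together with lower-order pieces coming from the inductive hypothesis. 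The terms $\bsB^2$, $\bsB'$, and $\bsB\bsA_-$ are handled as in Lemma \ref{big_estimate_on_norm_k=1}, using $\|\bsA_-(\bsH_0-z)^{-1/2}\|\le 1$ (Lemma \ref{lem_A_H_0_bounded}) and $\|(\bsH_0+1)^{1/2}(1+\lambda+\bsH_0)^{-1}\|\le(1+\lambda)^{-1/2}$; the problematic term $\bsA_-\bsB$ is treated exactly as the integral $I_1$ there, namely by inserting $(\bsH_0+1)^{-1/2}(\bsH_0+1)^{1/2}$ to the right of $\bsA_-$ and then using $\bsB\in\dom(\delta_{\bsH_0})$ to move $(\bsH_0+1)^{1/2}$ through $\bsB$ at the cost of the bounded commutator $\delta_{\bsH_0}(\bsB)$. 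When $(\bsH_0+1)^{1/2}$ is moved through $\bsB$ one may in fact need to move $(\bsH_0+1)^{(k-1)/2}$ through it — and this is where the higher commutators $\delta_{\bsH_0}^j(\bsB)$, $j\le k-1$, enter, via the expansion \eqref{big_BH_0}.

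The main technical obstacle, and the step I would be most careful about, is bookkeeping the higher repeated commutators: when one pushes $(\bsH_0+1)^{(k-1)/2}$ or $(\bsH_0+1)^{k/2}$ through $\bsB$ and $\bsB'$ using \eqref{big_BH_0}, one generates a sum of $\le 2^k$ terms, each a product of a power $(\bsH_0+1)^{j/2}$, a repeated commutator $\delta_{\bsH_0}^{\ell}(\bsB)$ (or $\delta_{\bsH_0}^{\ell}(\bsB')$ or $\bsA_-\cdot(\text{commutator})$), and resolvent factors; one must check that in each such term the total power of $(\bsH_0+1)$ appearing to the right is exactly absorbed by the available resolvent factors and the one power of $\bsA_-$ is absorbed by Lemma \ref{lem_A_H_0_bounded}, so that each $\lambda$-integrand is $O((1+\lambda)^{-1-\epsilon})$ and the Bochner integral converges in operator norm. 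Remark \ref{rem_dom_delta_0_subalgera}, which says $\bigcap_{j=0}^{k}\dom(\delta_{\bsH_0}^j)$ is a subalgebra, is what guarantees that $\bsB^2\in\bigcap_{j=1}^{k-1}\dom(\delta_{\bsH_0}^j)$ so that products of $\bsB$'s cause no trouble. Tracking that every term obeys a bound polynomial in the $\|\delta_{\bsH_0}^j(\bsB)\|$ and $\|\delta_{\bsH_0}^j(\bsB')\|$ for $j\le k-1$, with positive coefficients, then yields the asserted estimate and completes the induction.
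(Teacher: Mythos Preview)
You have the right ingredients—induction on $k$, Lemma \ref{big_estimate_on_norm_k=1} as base, the expansion \eqref{big_BH_0}, Lemma \ref{lem_A_H_0_bounded} to absorb $\bsA_-$, Remark \ref{rem_dom_delta_0_subalgera} for $\bsB^2$—but two steps in your outline do not work as written. First, after writing $(\bsH_i+1)^{-k/2}(\bsH_0+1)^{k/2}\xi=(\bsH_i+1)^{-1/2}\bsC_{k-1}(\bsH_0+1)^{1/2}\xi$ and commuting, the remainder $[(\bsH_i+1)^{-1/2},\bsC_{k-1}](\bsH_0+1)^{1/2}$ is a bounded commutator times an \emph{unbounded} operator; unwinding it via the integral representation leads to $[\bsH_i,\bsC_{k-1}]$, and since $\bsC_{k-1}$ already contains $(\bsH_0+1)^{(k-1)/2}$ you are back to a problem of the same order you started with. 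Second, the Bochner formula $c_k\int_0^\infty\lambda^{-k/2}(1+\lambda+\bsH_i)^{-1}\,d\lambda$ only represents $(\bsH_i+1)^{-k/2}$ when $0<k/2<1$; for $k\ge2$ the representation requires higher powers of the resolvent, and the $\lambda$-power counting you sketch does not carry over directly.

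The paper's route is both simpler and avoids these issues: step the induction by \emph{two} rather than by one, peeling off a full power $(\bsH_i+1)^{-1}$ and using the resolvent identity \eqref{ssss} directly—no Bochner integral in the inductive step at all. For $\xi\in\dom(\bsH_0^{k/2})$,
\begin{align*}
(\bsH_i+1)^{-k/2}(\bsH_0+1)^{k/2}\xi
&=(\bsH_i+1)^{-(k-2)/2}(\bsH_i+1)^{-1}(\bsH_0+1)\,(\bsH_0+1)^{(k-2)/2}\xi\\
&=(\bsH_i+1)^{-(k-2)/2}(\bsH_0+1)^{(k-2)/2}\xi\\
&\quad-(\bsH_i+1)^{-k/2}\big(\bsB\bsA_-+\bsA_-\bsB+\bsB^2+(-1)^i\bsB'\big)(\bsH_0+1)^{(k-2)/2}\xi.
\end{align*}
The first term is handled by the inductive hypothesis at level $k-2$. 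For the second term one applies \eqref{big_BH_0} exactly as you anticipated, pushing $(\bsH_0+1)^{(k-2)/2}$ through $\bsB$, $\bsB'$, $\bsB^2$ to produce terms of the form $(\bsH_i+1)^{-k/2}(\bsH_0+1)^{j/2}\cdot(\text{bounded})$ with $j\le k-1$; each such factor splits as $(\bsH_i+1)^{-(k-j)/2}\cdot\overline{(\bsH_i+1)^{-j/2}(\bsH_0+1)^{j/2}}$ and is bounded by the (strong) induction hypothesis, with the lone $\bsA_-$ absorbed by Lemma \ref{lem_A_H_0_bounded}. The case $k=2$ is done separately by the same resolvent identity and serves, together with Lemma \ref{big_estimate_on_norm_k=1}, to launch the induction.
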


\begin{proof}
We prove the assertion only for $i=1$, since the proof for $i=2$ is identical. 

We proceed by induction on $k$. For $k=1$ the assertion is proved in Lemma \ref{big_estimate_on_norm_k=1}. 
Let $k=2$. By the resolvent identity \eqref{ssss} we have 
\begin{align}\label{big_estimate_on_norm_k=2}
\begin{split}
({\bsH_1+1})^{-1}&(\bsH_0+1)\xi\\
&=\xi-({\bsH_1+1})^{-1}\bsB\bsA_-\xi-({\bsH_1+1})^{-1}\bsA_-\bsB\xi
\\
&\qquad-({\bsH_1+1})^{-1}(\bsB^2-\bsB')\xi
\end{split}
\end{align}
for all $\xi\in\dom(\bsH_0)$.
For the second term, using the fact that $\bsA$ and $\bsH_0$ commute, we write 
\begin{align*}({\bsH_1+1})^{-1}&\bsB\bsA_-\xi=({\bsH_1+1})^{-1}\bsB(\bsH_0+1)^{1/2}{\bsA_-}{(\bsH_0+1)^{-1/2}}\xi\\
&={(\bsH_1+1)^{-1/2}}\cdot{(\bsH_1+1)^{-1/2}}(\bsH_0+1)^{1/2}\cdot\bsB{\bsA_-}{(\bsH_0+1)^{-1/2}}\xi\\
&\quad -({\bsH_1+1})^{-1}[(\bsH_0+1)^{1/2}, \bsB]{\bsA_-}{(\bsH_0+1)^{-1/2}}\xi.
\end{align*}
By Lemma \ref{big_estimate_on_norm_k=1}, the operator ${(\bsH_1+1)^{-1/2}}(\bsH_0+1)^{1/2}$ extends to a bounded operator, and by the assumption $[(\bsH_0+1)^{1/2}, \bsB]$ also extends to a bounded operator. Hence, the operator $({\bsH_1+1})^{-1}\bsB\bsA_-$ extends to a bounded operator and
\begin{align*}
\Big\|\overline{({\bsH_1+1})^{-1}\bsB\bsA_-}\Big\|&\leq \Big\|\overline{{(\bsH_1+1)^{1/2}}(\bsH_0+1)^{-1/2}}\Big\|\|\bsB\|+\|\delta_{\bsH_0}(\bsB)\|\\
&\leq \const\cdot(1+\|\delta_{\bsH_0}(\bsB)\|+\|\bsB\|+\|\bsB\|^2+\|\bsB'\|)\cdot\|\bsB\|\\
&\qquad+\|\delta_{\bsH_0}(\bsB)\|,
\end{align*}
where the latter inequality follows from Lemma \ref{big_estimate_on_norm_k=1}.

For the third term on the right hand side \eqref{big_estimate_on_norm_k=2}, we write
\begin{align*}
({\bsH_1+1})^{-1}\bsA_-\bsB\xi&={(\bsH_1+1)^{-1/2}}\cdot{(\bsH_1+1)^{-1/2}}(\bsH_0+1)^{1/2}\\
&\quad \times{(\bsH_0+1)^{-1/2}}{\bsA_-}\bsB\xi,
\end{align*}
 for $ \xi\in\dom(\bsH_0),$
and therefore, by Lemma \ref{big_estimate_on_norm_k=1} we conclude that $(\bsH_1+1)^{-1}\bsA_-\bsB$ also extends to a bounded operator. 

Thus, $({\bsH_1+1})^{-1}(\bsH_0+1)$ extends to a bounded operator and by \eqref{big_estimate_on_norm_k=2} we have 
\begin{align*}
\Big\|&\overline{({\bsH_1+1})^{-1}(\bsH_0+1)}\Big\|\leq 1+\Big\|\overline{({\bsH_1+1})^{-1}\bsB\bsA_-}\Big\|\\
&\qquad+\Big\|\overline{({\bsH_1+1})^{-1}\bsA_-\bsB}\Big\|+\Big\|({\bsH_1+1})^{-1}(\bsB^2-\bsB')\Big\|\\
&\leq Q(\|\delta_{\bsH_0}^j(\bsB)\|), \quad j=0,1.
\end{align*}
for some polynomial $Q$.

Suppose now that for some $k\geq 3$ the assertion holds for all $j\leq k-1$. Let us prove it for $j=k$. For $\xi\in \dom(\bsH_0^{k/2})$, using the resolvent identity \eqref{ssss} we write
\begin{align}\label{big_uniform_norm_k_into_two}
&{(\bsH_1+1)^{-k/2}}(\bsH_0+1)^{k/2}\xi={(\bsH_1+1)^{-(k-2)/2}}({\bsH_1+1})^{-1}(\bsH_0+1)^{k/2}\xi \nonumber\\
&={(\bsH_1+1)^{-(k-2)/2}}(\bsH_0+1)^{(k-2)/2}\xi\\
&\quad+
{(\bsH_1+1)^{-k/2}}(\bsB\bsA_-+\bsA_-\bsB+\bsB^2-\bsB')(\bsH_0+1)^{(k-2)/2}\xi\nonumber.
\end{align}
By the induction hypothesis, the operator ${(\bsH_1+1)^{-(k-2)/2}}(\bsH_0+1)^{(k-2)/2}$ extends to a bounded operator with the required estimate in the uniform norm. 

For the second term on the right hand side of \eqref{big_uniform_norm_k_into_two} equality \eqref{big_BH_0} implies that 
\begin{align*}
&{(\bsH_1+1)^{-k/2}}(\bsB\bsA_-+\bsA_-\bsB+\bsB^2-\bsB')(\bsH_0+1)^{(k-2)/2}\xi\\
=\sum_{j=0}^{k-1}&(-1)^jC_k^j{(\bsH_1+1)^{-k/2}}(\bsH_0+1)^{j/2}[(\bsH_0+1)^{1/2},\bsB]^{(j)}{\bsA_-}{(\bsH_0+1)^{-1/2}}\xi\\
+\sum_{j=0}^{k-2}&(-1)^jC_k^j{(\bsH_1+1)^{-k/2}}(\bsH_0+1)^{(j+1)/2}{\bsA_-}{(\bsH_0+1)^{-1/2}}[(\bsH_0+1)^{1/2},\bsB]^{(j)}\xi
\\
+\sum_{j=0}^{k-2}&(-1)^jC_k^j{(\bsH_1+1)^{-k/2}}(\bsH_0+1)^{j/2}[(\bsH_0+1)^{1/2},\bsB^2-\bsB']^{(j)}\xi\\
\end{align*}
By the induction hypothesis, for every $j=0,\dots, k-1$, it follows that the operator ${(\bsH_1+1)^{-k/2}}(\bsH_0+1)^{j/2}$ extends to a bounded operator. In addition,  the operators $[(\bsH_0+1)^{1/2},\bsB]^{(j)}$
and $ [(\bsH_0+1)^{1/2},\bsB']^{(j)}$ also extend to bounded operators by the assumption of the proposition. Hence, 
$${(\bsH_1+1)^{-k/2}}(\bsB\bsA_-+\bsA_-\bsB+\bsB^2-\bsB')(\bsH_0+1)^{(k-2)/2}$$
extends to a bounded operator and the required estimate in the uniform norm follows.
\end{proof}

\begin{proposition}\label{big_uniform_bounded} 
Let $\bsB,\bsB'\in\bigcap_{j=1}^{k-1}\dom(\delta_{\bsH_0}^j)$ for some $k\in\bbN$ and let $z\in \bbC\setminus\bbR$. Then 

\begin{enumerate}
\item The operator  $(\bsH_0-z)^{k/2}{(\bsH_{i}-z)^{-k/2}}$, \, $i=1,2$, is bounded.  
\item The operators $\overline{{(\bsH_{i,n}-z)^{-k/2}}(\bsH_0-z)^{k/2}},$  and $(\bsH_0-z)^{k/2}{(\bsH_{i,n}-z)^{-k/2}}$,\, $i=1,2$, are bounded. 
\item The sequences
$$\Big\{\overline{{(\bsH_{i,n}-z)^{-k/2}}(\bsH_0-z)^{k/2}}\Big\}_{n=1}^\infty, \quad\Big\{(\bsH_0-z)^{k/2}{(\bsH_{i,n}-z)^{-k/2}}\Big\}_{n=1}^\infty$$
are uniformly bounded.
\end{enumerate}

\end{proposition}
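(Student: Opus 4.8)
The plan is to reduce everything to the single-operator estimate of Proposition~\ref{big_estimate_on_norm} applied to the operators $\bsH_j$ and $\bsH_{j,n}$. First I would observe that part (i) follows from Proposition~\ref{big_estimate_on_norm} together with an elementary resolvent-parameter shift: Proposition~\ref{big_estimate_on_norm} gives boundedness of $\overline{(\bsH_i+1)^{-k/2}(\bsH_0+1)^{k/2}}$, hence by taking adjoints (using self-adjointness and positivity of $\bsH_0,\bsH_i$, so that $(\bsH_i+1)^{-k/2}$ and $(\bsH_0+1)^{k/2}$ are appropriately commuting with their own functions) also of $(\bsH_0+1)^{k/2}(\bsH_i+1)^{-k/2}$; then for general $z\in\bbC\setminus\bbR$ one writes $(\bsH_0-z)^{k/2}(\bsH_i-z)^{-k/2}$ as a product of $(\bsH_0-z)^{k/2}(\bsH_0+1)^{-k/2}$, the bounded operator $(\bsH_0+1)^{k/2}(\bsH_i+1)^{-k/2}$, and $(\bsH_i+1)^{k/2}(\bsH_i-z)^{-k/2}$, the first and last of which are bounded by the spectral theorem (the functions $((\lambda-z)/(\lambda+1))^{\pm k/2}$ are bounded on $[0,\infty)$).

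For part (ii) the key point is that the operators $\bsB_n,\bsB_n'$ lie in the same intersection of domains $\bigcap_{j=1}^{k-1}\dom(\delta_{\bsH_0}^j)$ as $\bsB,\bsB'$. This is because $\bsB_n=\bsP_n\bsB\bsP_n$ with $\bsP_n=\chi_{[-n,n]}(\bsA_-)=1\otimes P_n$, and $\bsP_n$ commutes with $\bsH_0$ (since $\bsA_-$ and $\bsH_0$ commute, see \eqref{domH_0^12}), so $\bsP_n$ is a contraction lying in $\dom(\delta_{\bsH_0}^j)$ with $\delta_{\bsH_0}^j(\bsP_n)=0$ for all $j\geq 1$; by Remark~\ref{rem_dom_delta_0_subalgera} the intersection is a subalgebra, so $\bsB_n=\bsP_n\bsB\bsP_n$ and $\bsB_n'=\bsP_n\bsB'\bsP_n$ inherit membership. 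Thus Proposition~\ref{big_estimate_on_norm} applies verbatim to $\bsH_{i,n}$ and gives boundedness of $\overline{(\bsH_{i,n}+1)^{-k/2}(\bsH_0+1)^{k/2}}$, and then the same resolvent-shift argument as in part (i) upgrades this to arbitrary $z\in\bbC\setminus\bbR$ and to the reversed product $(\bsH_0-z)^{k/2}(\bsH_{i,n}-z)^{-k/2}$.

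For part (iii), uniform boundedness in $n$, the crucial observation is that the right-hand side of the estimate in Proposition~\ref{big_estimate_on_norm} is $\const\cdot Q\big(\|\delta_{\bsH_0}^j(\bsB_n)\|,\|\delta_{\bsH_0}^j(\bsB_n')\|\big)$ for a \emph{fixed} polynomial $Q$ with positive coefficients, and this bound depends on $n$ only through the norms $\|\delta_{\bsH_0}^j(\bsB_n)\|$ and $\|\delta_{\bsH_0}^j(\bsB_n')\|$, $j=0,\dots,k-1$. Since $\delta_{\bsH_0}$ is a derivation vanishing on $\bsP_n$, the Leibniz rule gives $\delta_{\bsH_0}^j(\bsP_n\bsB\bsP_n)=\bsP_n\,\delta_{\bsH_0}^j(\bsB)\,\bsP_n$, whence $\|\delta_{\bsH_0}^j(\bsB_n)\|\leq\|\delta_{\bsH_0}^j(\bsB)\|$ and similarly for $\bsB_n'$; as $Q$ has positive coefficients and is monotone in each argument, $Q\big(\|\delta_{\bsH_0}^j(\bsB_n)\|,\|\delta_{\bsH_0}^j(\bsB_n')\|\big)\leq Q\big(\|\delta_{\bsH_0}^j(\bsB)\|,\|\delta_{\bsH_0}^j(\bsB')\|\big)$, a bound independent of $n$. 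The resolvent-shift factors $(\bsH_0-z)^{k/2}(\bsH_0+1)^{-k/2}$ and $(\bsH_{i,n}+1)^{k/2}(\bsH_{i,n}-z)^{-k/2}$ are also uniformly bounded (the first has no $n$-dependence, the second has uniform norm bound $\sup_{\lambda\geq 0}|(\lambda-z)/(\lambda+1)|^{k/2}$ by the spectral theorem), so the products are uniformly bounded. The main obstacle, to be handled carefully, is verifying that $\bsP_n$ genuinely commutes with all fractional powers $(\bsH_0+1)^{j/2}$ so that the Leibniz computation $\delta_{\bsH_0}^j(\bsP_n\bsB\bsP_n)=\bsP_n\delta_{\bsH_0}^j(\bsB)\bsP_n$ is legitimate on $\dom(\bsH_0^{j/2})$; this follows from $\bsP_n=1\otimes\chi_{[-n,n]}(A_-)$ commuting with $\bsA_-$ and with $-d^2/dt^2$, hence with $\bsH_0$ and its spectral projections, but the bookkeeping on domains should be stated explicitly.
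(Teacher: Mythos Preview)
Your proposal is correct and follows essentially the same approach as the paper: both reduce to Proposition~\ref{big_estimate_on_norm}, pass to the reversed product via adjoints, observe that $\bsP_n$ commutes with $\bsH_0$ so that $\bsB_n,\bsB_n'\in\bigcap_{j=1}^{k-1}\dom(\delta_{\bsH_0}^j)$ with $\delta_{\bsH_0}^j(\bsB_n)=\bsP_n\delta_{\bsH_0}^j(\bsB)\bsP_n$, and then use the positivity of the coefficients of $Q$ to get the uniform bound. The only difference is cosmetic: the paper disposes of general $z$ by writing ``without loss of generality $z=-1$'' and later uses the norm identity $\|(\bsH_0+1)^{k/2}(\bsH_{i,n}+1)^{-k/2}\|=\|\overline{(\bsH_{i,n}+1)^{-k/2}(\bsH_0+1)^{k/2}}\|$, whereas you spell out the resolvent-shift factorization via $(\bsH_0-z)^{k/2}(\bsH_0+1)^{-k/2}$ and $(\bsH_i+1)^{k/2}(\bsH_i-z)^{-k/2}$ explicitly.
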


\begin{proof}Without loss of generality we can assume that $z=-1$. 

$(i).$ As the operators $(\bsH_0+1)^{k/2}$ and ${(\bsH_{i,n}+1)^{-k/2}}$ are self-adjoint, both of the operators ${(\bsH_{i,n}+1)^{-k/2}}$ and ${(\bsH_{i,n}+1)^{-k/2}}(\bsH_0+1)^{k/2}$ are densely defined, \cite[Theorem 4.19 (b)]{Weidmann} implies that 
\begin{align*}
(\bsH_0+1)^{k/2}{(\bsH_{i}+1)^{-k/2}}&=\Big({(\bsH_{i}+1)^{-k/2}}(\bsH_0+1)^{k/2}\Big)^*\\
&=\Big(\overline{{(\bsH_{i}+1)^{-k/2}}(\bsH_0+1)^{k/2}}\Big)^*\in\cB(L^2(\bbR;\cH)),
\end{align*}
where the last inclusion follows from  Proposition \ref{big_estimate_on_norm}.

$(ii).$ Since $\bsB,\bsB'\in\bigcap_{j=1}^{k-1}\dom(\delta_{\bsH_0}^j)$, $\bsB_n=\bsP_n\bsB\bsP_n, \, \bsB'_n=\bsP_n\bsB'\bsP_n,$ and $\bsP_n$ commutes with $\bsH_0$, we infer that $\bsB_n,\bsB'_n\in\bigcap_{j=1}^{k-1}\dom(\delta_{\bsH_0}^j)$. Therefore, applying   Proposition \ref{big_estimate_on_norm} and part (i) to the operators $\bsH_{i,n}$ and $\bsH_0$, we obtain the assertion.

$(iii).$ Note that for $j=1,\dots, k-1,$ we have 
$$\|\delta_{\bsH_0}^j(\bsB_n)\|\leq \|\delta_{\bsH_0}^j(\bsB)\|,\quad \|\delta_{\bsH_0}^j(\bsB_n')\|\leq \|\delta_{\bsH_0}^j(\bsB')\|.$$
Hence,  Proposition \ref{big_estimate_on_norm} applied to the operators $\bsH_{i,n}$ and $\bsH_0$ implies that for some polynomial $Q$ with positive coefficients, we have 
\begin{align*}
\Big\|\overline{{(\bsH_{i,n}+1)^{-k/2}}(\bsH_0+1)^{k/2}}\Big\|\leq \const Q(\|\delta_{\bsH_0}^j(\bsB_n)\|, \|\delta_{\bsH_0}^j(\bsB'_n)\|)\\
\leq \const Q(\|\delta_{\bsH_0}^j(\bsB)\|, \|\delta_{\bsH_0}^j(\bsB')\|)
,\quad j=0,\dots k-1,
\end{align*}
which together with the equality 
$$\Big\|(\bsH_0-z)^{k/2}{(\bsH_{i,n}-z)^{-k/2}}\Big\|=\Big\|\overline{{(\bsH_{i,n}+1)^{-k/2}}(\bsH_0+1)^{k/2}}\Big\|,$$
concludes the proof.
\end{proof}

\begin{corollary}\label{big_domains}
Let $\bsB,\bsB'\in\bigcap_{j=1}^{k-1}\dom(\delta_{\bsH_0}^j)$ for some $k\in\bbN$. Then 
$$\dom(\bsH_{i,n}^{k/2})=\dom(\bsH_i^{k/2})\subset\dom(\bsH_0^{k/2}),\quad i=1,2,\quad n\in\bbN.$$
\end{corollary}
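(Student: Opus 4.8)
The plan is to deduce the corollary directly from Proposition~\ref{big_uniform_bounded} by a short operator-domain argument, with the two key inputs being the boundedness of $(\bsH_0+1)^{k/2}(\bsH_i+1)^{-k/2}$ and of its (densely defined) inverse-type companion $\overline{(\bsH_i+1)^{-k/2}(\bsH_0+1)^{k/2}}$, and similarly with $\bsH_i$ replaced by $\bsH_{i,n}$. Throughout I would, as in the proof of Proposition~\ref{big_uniform_bounded}, work at the resolvent point $z=-1$ without loss of generality, so that all of $(\bsH_0+1)^{k/2}$, $(\bsH_i+1)^{k/2}$, $(\bsH_{i,n}+1)^{k/2}$ are nonnegative self-adjoint operators with the same functional calculus structure.

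First I would establish the inclusion $\dom(\bsH_i^{k/2})\subseteq\dom(\bsH_0^{k/2})$. Take $\xi\in\dom((\bsH_i+1)^{k/2})$. By Proposition~\ref{big_uniform_bounded}(i) the operator $\overline{(\bsH_i+1)^{-k/2}(\bsH_0+1)^{k/2}}$ is bounded, so its adjoint $(\bsH_0+1)^{k/2}(\bsH_i+1)^{-k/2}$ is bounded and everywhere defined; applying it to $(\bsH_i+1)^{k/2}\xi$ gives that $(\bsH_0+1)^{k/2}\xi$ is a well-defined vector, i.e. $\xi\in\dom((\bsH_0+1)^{k/2})=\dom(\bsH_0^{k/2})$. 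This shows $\dom(\bsH_i^{k/2})\subseteq\dom(\bsH_0^{k/2})$ for $i=1,2$, and the identical argument applied to the reduced operators (using Proposition~\ref{big_uniform_bounded}(ii)) gives $\dom(\bsH_{i,n}^{k/2})\subseteq\dom(\bsH_0^{k/2})$ for all $n\in\bbN$.

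Next I would prove the reverse inclusion and the equality $\dom(\bsH_i^{k/2})=\dom(\bsH_{i,n}^{k/2})$. For the reverse inclusion I would argue symmetrically: for $\eta\in\dom(\bsH_0^{k/2})$, boundedness of $(\bsH_0+1)^{k/2}(\bsH_i+1)^{-k/2}$ together with boundedness of $\overline{(\bsH_i+1)^{-k/2}(\bsH_0+1)^{k/2}}$ lets one write $\eta = (\bsH_i+1)^{-k/2}\big[(\bsH_0+1)^{k/2}(\bsH_i+1)^{-k/2}\big]^{-1}\!(\bsH_0+1)^{k/2}\eta$ — more precisely, the bounded operator $\overline{(\bsH_i+1)^{-k/2}(\bsH_0+1)^{k/2}}$ maps $\eta$ into $\ran((\bsH_i+1)^{-k/2})=\dom((\bsH_i+1)^{k/2})$, because $(\bsH_i+1)^{-k/2}(\bsH_0+1)^{k/2}\eta$ already lies in that range and the closure only adds limit points that, by boundedness of $(\bsH_0+1)^{k/2}(\bsH_i+1)^{-k/2}$, remain in the range. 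Hence $\eta\in\dom(\bsH_i^{k/2})$, giving $\dom(\bsH_0^{k/2})\subseteq\dom(\bsH_i^{k/2})$ and thus equality. Running this with $\bsH_i$ replaced by $\bsH_{i,n}$ yields $\dom(\bsH_{i,n}^{k/2})=\dom(\bsH_0^{k/2})=\dom(\bsH_i^{k/2})$ for all $n$, which is exactly the claimed chain of equalities.

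The main obstacle I anticipate is the technical point in the previous paragraph: showing that the \emph{closure} $\overline{(\bsH_i+1)^{-k/2}(\bsH_0+1)^{k/2}}$ of the densely defined operator does not take us outside $\ran((\bsH_i+1)^{-k/2})$ — equivalently, that $\dom(\bsH_0^{k/2})$ is a core for $(\bsH_i+1)^{-k/2}(\bsH_0+1)^{k/2}$ in the right sense. This is handled cleanly by noting that $(\bsH_0+1)^{k/2}(\bsH_i+1)^{-k/2}$ is bounded with everywhere-defined bounded inverse on its range (it is the adjoint of the bounded injective-with-dense-range operator $\overline{(\bsH_i+1)^{-k/2}(\bsH_0+1)^{k/2}}$ composed appropriately), so that the two bounded operators $(\bsH_0+1)^{k/2}(\bsH_i+1)^{-k/2}$ and $\overline{(\bsH_i+1)^{-k/2}(\bsH_0+1)^{k/2}}$ are genuine inverses of one another as bounded maps between $\dom((\bsH_i+1)^{k/2})$ (with graph norm) and $\dom((\bsH_0+1)^{k/2})$ (with graph norm); once this Banach-space isomorphism statement is in hand the equality of domains is immediate. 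Everything else is bookkeeping with the functional calculus and Proposition~\ref{big_uniform_bounded}, with no further estimates required.
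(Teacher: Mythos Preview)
Your first paragraph, establishing the inclusion $\dom(\bsH_i^{k/2})\subseteq\dom(\bsH_0^{k/2})$ (and the analogous statement for $\bsH_{i,n}$) via boundedness of $(\bsH_0+1)^{k/2}(\bsH_i+1)^{-k/2}$, is correct and is exactly what the paper does: write $\xi=(\bsH_i+1)^{-k/2}\eta$ and apply the bounded operator to $\eta$. This is the full content of the paper's proof.

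Your second and third paragraphs, however, contain a genuine error. You claim that $S=(\bsH_0+1)^{k/2}(\bsH_i+1)^{-k/2}$ and $T=\overline{(\bsH_i+1)^{-k/2}(\bsH_0+1)^{k/2}}$ are ``genuine inverses of one another.'' They are not: as the paper itself records in the proof of Proposition~\ref{big_uniform_bounded}(i), one has $S=T^*$, i.e.\ they are \emph{adjoints}. Adjoint and inverse are unrelated in general, and nothing in Proposition~\ref{big_uniform_bounded} gives you a bounded operator of the form $(\bsH_i+1)^{k/2}(\bsH_0+1)^{-k/2}$, which is what you would need to conclude $\dom(\bsH_0^{k/2})\subseteq\dom(\bsH_i^{k/2})$ by the same domain argument. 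Your attempted repair---arguing that the closure $T$ cannot leave $\ran((\bsH_i+1)^{-k/2})$---is circular: knowing that $T$ maps into $\dom(\bsH_i^{k/2})$ tells you only that $(\bsH_i+1)^{-k/2}(\bsH_0+1)^{k/2}\eta\in\dom(\bsH_i^{k/2})$, not that $\eta$ itself lies there. The ``Banach-space isomorphism'' you invoke at the end is simply asserted, not proved, and would already presuppose the domain equality you are trying to establish.

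It is worth noting that the paper's own proof only carries out the inclusion $\dom(\bsH_i^{k/2})\subset\dom(\bsH_0^{k/2})$ (and the parallel one for $\bsH_{i,n}$), declaring the rest ``similar''; the equality $\dom(\bsH_{i,n}^{k/2})=\dom(\bsH_i^{k/2})$ is not argued separately there either. In any case, the only place the corollary is invoked later (Proposition~\ref{big_so_convergence}(ii)) uses precisely the inclusion into $\dom(\bsH_0^{k/2})$, which your first paragraph correctly supplies.
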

\begin{proof}We prove only the equality $\dom(\bsH_0^{k/2})=\dom(\bsH_1^{k/2})$ since the others can be proved similarly. 

Let $\xi\in\dom(\bsH_1^{k/2})=\dom(\bsH_1+1)^{k/2}=\ran({(\bsH_1+1)^{-k/2}})$. Then there exists $\eta\in L^2(\bbR;\cH)$ such that $\xi={(\bsH_1+1)^{-k/2}}\eta$.
Since $\eta\in L^2(\bbR;\cH)$ and by Corollary \ref{big_uniform_bounded} (i) the operator $(\bsH_0+1)^{k/2}{(\bsH_{1}+1)^{-k/2}}$ is bounded, we have that 
$$(\bsH_0+1)^{k/2}\xi=(\bsH_0+1)^{k/2}{(\bsH_{1}+1)^{-k/2}}\eta\in\cH,$$
that is $\xi\in\dom(\bsH_0+1)^{k/2}=\dom(\bsH_0^{k/2})$.
\end{proof}

By Propositions \ref{big_estimate_on_norm} and  \ref{big_uniform_bounded}  the operators $\overline{{(\bsH_{2,n}-z )^{-\frac{k}2}}(\bsH_0-z)^{\frac{k}2}}$, $n\in\bbN$,  and $\overline{{(\bsH_{2}-z )^{-\frac{k}2}}(\bsH_0-z)^{\frac{k}2}}$ are bounded for every $z\in \bbC\setminus\bbR$.
The following proposition establishes the strong-operator convergence of the sequences  $\{\overline{{(\bsH_{2,n}-z )^{-\frac{k}2}}(\bsH_0-z)^{\frac{k}2}}\}_{n\in\bbN}$ to $\overline{{(\bsH_{2}-z )^{-\frac{k}2}}(\bsH_0-z)^{\frac{k}2}}$, which is required for the proof of the principal trace formula. 
The result here should be compared with \cite[Lemma 3.13 (ii)]{CGLS16}, where a much simpler case $k=2$ was treated.
\begin{proposition}\label{big_so_convergence}
Assume that $\bsB,\bsB'\in\bigcap_{j=1}^{k-1}\dom(\delta_{\bsH_0}^j)$ for some $k\in\bbN$  and let $z\in \bbC\setminus\bbR$. Then 
\begin{enumerate}
\item $\overline{{(\bsH_{2,n}-z )^{-\frac{k}2}}(\bsH_0-z)^{\frac{k}2}}$ converges to $\overline{{(\bsH_{2}-z )^{-\frac{k}2}}(\bsH_0-z)^{\frac{k}2}}$ 
in the strong operator topology.
\item $(\bsH_0-z)^{\frac{k}2}{(\bsH_{1,n} -z )^{-\frac{k}2}}$ converges to $(\bsH_0-z)^{\frac{k}2}{(\bsH_{1} -z )^{-\frac{k}2}}$
in the strong operator topology.
\end{enumerate}
\end{proposition}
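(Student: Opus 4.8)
The plan is to reduce the strong-operator convergence of the ``mixed'' operators to the already-established strong resolvent convergence $\bsH_{j,n}\to\bsH_j$ (Lemma~\ref{approx_H_jn}), the uniform boundedness of the families from Proposition~\ref{big_uniform_bounded}(iii), and a density argument. I would fix $z=-1$ without loss of generality throughout, writing $\bsG_0=(\bsH_0+1)^{1/2}$ and $\bsG_{j,n}=(\bsH_{j,n}+1)^{1/2}$, $\bsG_j=(\bsH_j+1)^{1/2}$.

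First I would prove part (i). Let $\bsX_n=\overline{(\bsH_{2,n}+1)^{-k/2}(\bsH_0+1)^{k/2}}$ and $\bsX=\overline{(\bsH_2+1)^{-k/2}(\bsH_0+1)^{k/2}}$; by Proposition~\ref{big_uniform_bounded}(iii) the $\bsX_n$ are uniformly bounded, so by the standard $\varepsilon/3$ argument it suffices to prove $\bsX_n\xi\to\bsX\xi$ on the dense subspace $\xi\in\dom(\bsH_0^{k/2})$, where no closure is needed and $\bsX_n\xi=(\bsH_{2,n}+1)^{-k/2}(\bsH_0+1)^{k/2}\xi$. Write $\eta=(\bsH_0+1)^{k/2}\xi\in L^2(\bbR;\cH)$; then $\bsX_n\xi=(\bsH_{2,n}+1)^{-k/2}\eta$ and $\bsX\xi=(\bsH_2+1)^{-k/2}\eta$, so the claim is exactly that $(\bsH_{2,n}+1)^{-k/2}\to(\bsH_2+1)^{-k/2}$ strongly. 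For $k$ even this is immediate from strong resolvent convergence by iterating; for general $k$ one can invoke the functional-calculus fact that strong resolvent convergence implies $\phi(\bsH_{2,n})\to\phi(\bsH_2)$ strongly for any bounded continuous $\phi$ (here $\phi(x)=(x+1)^{-k/2}$), e.g.\ via \cite[Theorem VIII.20]{RS_book_I}. This settles (i).

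For part (ii) I would take adjoints. By Proposition~\ref{big_uniform_bounded}(i)--(ii) the operator $(\bsH_0+1)^{k/2}(\bsH_{1,n}+1)^{-k/2}$ is bounded and equals $\big(\overline{(\bsH_{1,n}+1)^{-k/2}(\bsH_0+1)^{k/2}}\big)^{*}$, and similarly for the limit; these adjoint families are uniformly bounded by Proposition~\ref{big_uniform_bounded}(iii). Since a uniformly bounded sequence $\bsY_n$ converges strongly to $\bsY$ as soon as $\bsY_n^{*}\to\bsY^{*}$ strongly \emph{and} $\bsY_n\to\bsY$ strongly on a dense set is not automatic from adjoint convergence, I would instead argue directly: for $\eta\in L^2(\bbR;\cH)$ set $\xi_n=(\bsH_{1,n}+1)^{-k/2}\eta$ and $\xi=(\bsH_1+1)^{-k/2}\eta$; by part (i)'s functional-calculus step $\xi_n\to\xi$, and $\xi_n\in\dom(\bsH_0^{k/2})$ with $(\bsH_0+1)^{k/2}\xi_n$ uniformly bounded in $n$ (again by Proposition~\ref{big_uniform_bounded}(iii)). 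To upgrade bounded-and-convergent to the desired convergence $(\bsH_0+1)^{k/2}\xi_n\to(\bsH_0+1)^{k/2}\xi$ I would test against a dense set: for $\zeta\in\dom(\bsH_0^{k/2})$, $\langle(\bsH_0+1)^{k/2}\xi_n,\zeta\rangle=\langle\xi_n,(\bsH_0+1)^{k/2}\zeta\rangle\to\langle\xi,(\bsH_0+1)^{k/2}\zeta\rangle=\langle(\bsH_0+1)^{k/2}\xi,\zeta\rangle$, so $(\bsH_0+1)^{k/2}\xi_n\rightharpoonup(\bsH_0+1)^{k/2}\xi$ weakly, and combined with the uniform norm bound and the fact that the weak limit is then actually strong provided we also control norms — so in fact I would route through resolvent convergence of $\bsH_{1,n}$ applied to the vector $(\bsH_0+1)^{k/2}\xi$ rather than fight the weak-to-strong gap. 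Concretely: $(\bsH_0+1)^{k/2}(\bsH_{1,n}+1)^{-k/2}\eta=\big[(\bsH_0+1)^{k/2}(\bsH_1+1)^{-k/2}\big](\bsH_1+1)^{k/2}(\bsH_{1,n}+1)^{-k/2}\eta$, and the bracketed operator is fixed and bounded, so it suffices that $(\bsH_1+1)^{k/2}(\bsH_{1,n}+1)^{-k/2}\to I$ strongly; rewriting this as $I+(\bsH_1+1)^{k/2}\big[(\bsH_{1,n}+1)^{-k/2}-(\bsH_1+1)^{-k/2}\big]$ and applying $(\bsH_1+1)^{k/2}(\bsH_{1,n}+1)^{-k/2}$ is uniformly bounded (part (iii)), one reduces to $(\bsH_{1,n}+1)^{-k/2}-(\bsH_1+1)^{-k/2}\to0$ strongly on $\ran(\bsH_1+1)^{-k/2}=\dom(\bsH_1^{k/2})$, which again follows from strong resolvent convergence.

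\textbf{Main obstacle.} The one genuinely delicate point is the passage, in part (ii), from ``$\bsX_n\xi$ converges'' plus ``$(\bsH_0+1)^{k/2}\bsX_n\xi$ is uniformly bounded'' to norm convergence of $(\bsH_0+1)^{k/2}\bsX_n\xi$: boundedness plus weak convergence does not give strong convergence in general. The clean way around it, as sketched above, is never to take the closure-then-apply route but to factor $(\bsH_0+1)^{k/2}(\bsH_{j,n}+1)^{-k/2}=\big[(\bsH_0+1)^{k/2}(\bsH_j+1)^{-k/2}\big]\cdot\big[(\bsH_j+1)^{k/2}(\bsH_{j,n}+1)^{-k/2}\big]$, isolate the fixed bounded factor, and reduce everything to the strong convergence $\phi(\bsH_{j,n})\to\phi(\bsH_j)$ with $\phi$ bounded continuous — which is exactly what Lemma~\ref{approx_H_jn} delivers. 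The uniform bounds of Proposition~\ref{big_uniform_bounded}(iii) are what make the intermediate factors harmless.
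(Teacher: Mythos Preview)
For part (i) your argument is correct and coincides with the paper's proof: restrict to the dense set $\dom(\bsH_0^{k/2})$, use strong resolvent convergence together with \cite[Theorem VIII.20]{RS_book_I} to get $(\bsH_{2,n}+1)^{-k/2}\to(\bsH_{2}+1)^{-k/2}$ strongly, and then extend to all of $L^2(\bbR;\cH)$ by the uniform bound from Proposition~\ref{big_uniform_bounded}(iii).

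For part (ii) the paper takes a much shorter route than your factorization. It simply records (via Corollary~\ref{big_domains}) that both $(\bsH_{1,n}+1)^{-k/2}\xi$ and $(\bsH_{1}+1)^{-k/2}\xi$ lie in $\dom(\bsH_0^{k/2})$, uses strong resolvent convergence to get $(\bsH_{1,n}+1)^{-k/2}\xi\to(\bsH_{1}+1)^{-k/2}\xi$, and then concludes $(\bsH_0+1)^{k/2}(\bsH_{1,n}+1)^{-k/2}\xi\to(\bsH_0+1)^{k/2}(\bsH_{1}+1)^{-k/2}\xi$ by appealing to the closedness of $(\bsH_0+1)^{k/2}$. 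You are right that this last implication is delicate in general---closedness does not give continuity---but that is what the paper writes, and it does not pass through your factorization.

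Your proposed workaround, however, does not close the gap you correctly identified. Two issues:
\begin{itemize}
\item You assert that $(\bsH_1+1)^{k/2}(\bsH_{1,n}+1)^{-k/2}$ is uniformly bounded ``by part~(iii)'', but Proposition~\ref{big_uniform_bounded}(iii) only bounds the mixed operators with $\bsH_0$ on one side. To deduce your bound you would need $(\bsH_1+1)^{k/2}(\bsH_0+1)^{-k/2}$ bounded, i.e.\ $\dom(\bsH_0^{k/2})\subset\dom(\bsH_1^{k/2})$, which is the \emph{reverse} of the inclusion proved in Corollary~\ref{big_domains}.
\item Even granting that uniform bound, your final reduction---from $(\bsH_{1,n}+1)^{-k/2}-(\bsH_{1}+1)^{-k/2}\to 0$ strongly to $(\bsH_1+1)^{k/2}\big[(\bsH_{1,n}+1)^{-k/2}-(\bsH_{1}+1)^{-k/2}\big]\to 0$---is exactly the ``apply an unbounded closed operator to a norm-convergent sequence'' step that you flagged as the main obstacle. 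The factorization has relocated the difficulty from $(\bsH_0+1)^{k/2}$ to $(\bsH_1+1)^{k/2}$ without resolving it.
\end{itemize}
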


\begin{proof}Without loss of generality we have $z=-1$. 

$(i).$ By Lemma \ref{approx_H_jn} we have that $\bsH_{2,n}\to\bsH_2$ in the strong resolvent sense. Therefore, \cite[Theorem VIII.20]{RS_book_I} implies that ${(\bsH_{2,n}+1 )^{-\frac{k}2}}\to {(\bsH_{2}+1 )^{-\frac{k}2}}$ in the strong operator topology. Hence, for every $\xi\in\dom(\bsH_0^{\frac{k}2})$ we have 
\begin{align*}
{(\bsH_{2,n}+1 )^{-\frac{k}2}}(\bsH_0+1)^{\frac{k}2}\xi\to {(\bsH_{2}+1 )^{-\frac{k}2}}(\bsH_0+1)^{\frac{k}2}\xi.
\end{align*}
Since $\dom(\bsH_0^{\frac{k}2})$ is a dense subset in $L^2(\bbR;\cH)$ and by Proposition \ref{big_uniform_bounded} (iii) the sequence $\{\overline{{(\bsH_{2,n}+1 )^{-\frac{k}2}}(\bsH_0+1)^{\frac{k}2}}\}_{n\in\bbN}$ is uniformly bounded, we infer that 
$$\overline{{(\bsH_{2,n}+1 )^{-\frac{k}2}}(\bsH_0+1)^{\frac{k}2}}\to \overline{{(\bsH_{2}+1 )^{-\frac{k}2}}(\bsH_0+1)^{\frac{k}2}}$$
in the strong operator topology. 

$(ii).$ 
By Corollary \ref{big_domains} we have that
$$\dom(\bsH_{1}+1)^{\frac{k}2}=\dom(\bsH_{1,n}+1)^{\frac{k}2}\subset \dom(\bsH_0+1)^{\frac{k}2},$$ and therefore both
${(\bsH_{1,n}+1 )^{-\frac{k}2}}\xi$ and ${(\bsH_{1}+1 )^{-\frac{k}2}}\xi$ lie in $\dom(\bsH_0+1)^{\frac{k}2}$ for every $\xi\in L^2(\bbR;\cH)$. 
The strong resolvent convergence $\bsH_{1,n}\to\bsH_1$ and   \cite[Theorem VIII.20]{RS_book_I}, imply that ${(\bsH_{1,n}+1 )^{-\frac{k}2}}\to{(\bsH_{1}+1 )^{-\frac{k}2}}$ in the strong operator topology. Hence, 
$$(\bsH_0+1)^{\frac{k}2}{(\bsH_{1,n} +1 )^{-\frac{k}2}}\xi\to (\bsH_0+1)^{\frac{k}2}{(\bsH_{1} +1 )^{-\frac{k}2}}\xi$$
for every $\xi\in L^2(\bbR;\cH)$, since the operator $(\bsH_0+1)^{\frac{k}2}$ is closed.
\end{proof}

\section{The approximation result for the pair $(\bsH_2, \bsH_1)$}\label{sec_hyp}
In this section we give the precise hypotheses that we impose for our results and expand on the details of what they mean. We also develop further our  approximation scheme that is essential for the proof of the principal trace formula in Section \ref{ch_principla trace formula} below.

\begin{hypothesis}\label{hyp_final} Assume $A_-$ is self-adjoint on $\dom(A_-) \subseteq \cH$ 
\begin{enumerate}
\item Suppose we have a family of bounded self-adjoint operators 
$\{B(t)\}_{t \in \bbR} \subset \cB(\cH)$, continuously differentiable with respect to $t$ in the uniform operator norm,  
such that 
$\|B'(\cdot)\|\in L^1(\bbR; dt)\cap L^\infty(\bbR; dt).  $   
\item Suppose that for some $p\in\bbN\cup\{0\}$, we have  
$$B'(t)(A_-+i)^{-p-1}\in\cL_1(\cH),\quad \int_\bbR \|B'(t)(A_-+i)^{-p-1}\|_1dt<\infty.$$
\item Let $m=\lceil \frac{p}2\rceil$.
Assume that for all $z<0$ we have that 
$$\bsB'(\bsH_0-z)^{-m-1}\in\cL_1(L^2(\bbR;\cH)).$$ 
\item $\bsB,\bsB'\in\bigcap_{j=1}^{2m-1}\dom(\delta_{\bsH_0}^j)$, where $\delta_{\bsH_0}$ is defined in \eqref{def_deriv_bsH_0}.
\end{enumerate}
In what follows we always take the smallest $p\in\bbN\cup\{0\}$ satisfying (iii).
\end{hypothesis}

%

Assuming now Hypothesis \ref{hyp_final}, we can prove our second key result, which guarantees that we can use approximation on the left-hand side of the principal trace formula. The proof of this result crucially uses the results obtained in Section \ref{sec_unifbound}.


\begin{theorem}\label{left-hand side_conver_in_L1}Assume Hypothesis \ref{hyp_final}. Let $z\in\bbC\setminus\bbR_+$. 
\begin{enumerate}
\item Both  
$(\bsH_2-z)^{-m}-(\bsH_1-z)^{-m}$
and $(\bsH_{2,n}-z)^{-m}-(\bsH_{1,n}-z)^{-m}$
are trace class.
\item We have $$\lim_{n\rightarrow \infty }\Big\|\big[(\bsH_{2,n}-z)^{-m}-(\bsH_{1,n}-z)^{-m}\big]-\big[(\bsH_2-z)^{-m}-(\bsH_1-z)^{-m}\big]\Big\|_1=0.$$
\end{enumerate}
\end{theorem}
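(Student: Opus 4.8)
The plan is to reduce the statement about the fourth powers (or rather $m$-th powers) of resolvents of $\bsH_j$ to the corresponding statement for the resolvents of the fibre operators $A_\pm$, using the explicit structure of $\bsH_j = \bsH_0 + \bsB\bsA_- + \bsA_-\bsB + \bsB^2 + (-1)^j\bsB'$ from \eqref{H_j_decomposition}. First I would write, via the resolvent identity \eqref{ssss},
\[
(\bsH_2-z)^{-1}-(\bsH_1-z)^{-1} = -2(\bsH_2-z)^{-1}\bsB'(\bsH_1-z)^{-1},
\]
since the only difference between $\bsH_1$ and $\bsH_2$ is the term $(-1)^j\bsB'$ (note $\bsH_2 - \bsH_1 = 2\bsB'$), and then iterate the elementary identity $C_1^m - C_2^m = \sum_{k_0+k_1=m-1} C_1^{k_0}(C_1-C_2)C_2^{k_1}$ with $C_i = (\bsH_i - z)^{-1}$. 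This expresses $(\bsH_2-z)^{-m}-(\bsH_1-z)^{-m}$ as a sum of terms of the form $(\bsH_2-z)^{-k_0-1}\,\bsB'\,(\bsH_1-z)^{-k_1-1}$ with $k_0+k_1 = m-1$.

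The second step is the Schatten bookkeeping. In each term I would insert powers of $(\bsH_0-z)$: write $(\bsH_i-z)^{-k_i-1} = (\bsH_i-z)^{-(k_i+1)/2}\cdot\big[(\bsH_i-z)^{(k_i+1)/2}(\bsH_0-z)^{-(k_i+1)/2}\big]\cdot(\bsH_0-z)^{(k_i+1)/2}$ where convenient, or more simply factor $\bsB'(\bsH_0-z)^{-m-1}$ out of the middle, using Hypothesis \ref{hyp_final}(iii), and use Proposition \ref{big_uniform_bounded} to control the remaining factors of the form $(\bsH_0-z)^{\ell/2}(\bsH_i-z)^{-\ell/2}$ and their adjoints in the uniform norm — here Hypothesis \ref{hyp_final}(iv), namely $\bsB,\bsB'\in\bigcap_{j=1}^{2m-1}\dom(\delta_{\bsH_0}^j)$, supplies exactly the derivative hypotheses that Proposition \ref{big_uniform_bounded} needs (since we will encounter powers up to roughly $2m$). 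The point is that the $2m$ resolvent factors of total order $m+1$, after redistributing, leave us with one genuine $\cL_1$ factor $\bsB'(\bsH_0-z)^{-m-1}$ times bounded operators, so each term — hence the whole difference — lies in $\cL_1(L^2(\bbR;\cH))$. The same computation verbatim, with $\bsB'$ replaced by $\bsB'_n = \bsP_n\bsB'\bsP_n$ and $\bsH_{j}$ by $\bsH_{j,n}$, gives part (i) for the reduced operators, and the uniform bounds of Proposition \ref{big_uniform_bounded}(iii) show all the bounded factors are uniformly bounded in $n$.

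For part (ii), I would take the term-by-term representation from the previous step and pass to the limit using Lemma \ref{so_inLp}: the genuinely trace-class factor converges in $\cL_1$-norm, namely $\bsB'_n(\bsH_0-z)^{-m-1} = \bsP_n\bsB'(\bsH_0-z)^{-m-1}\bsP_n \to \bsB'(\bsH_0-z)^{-m-1}$ in $\cL_1$ by Lemma \ref{so_inLp} together with $\bsP_n\to 1$ strongly and Hypothesis \ref{hyp_final}(iii); meanwhile the surrounding bounded factors $(\bsH_0-z)^{\ell/2}(\bsH_{i,n}-z)^{-\ell/2}$ and $\overline{(\bsH_{i,n}-z)^{-\ell/2}(\bsH_0-z)^{\ell/2}}$ converge strongly to their $n=\infty$ counterparts by Proposition \ref{big_so_convergence} (and are uniformly bounded by Proposition \ref{big_uniform_bounded}(iii)), and $(\bsH_{i,n}-z)^{-k}\to(\bsH_i-z)^{-k}$ strongly by Lemma \ref{approx_H_jn}. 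Applying Lemma \ref{so_inLp} to each summand — "bounded $\cdot$ $\cL_1$ $\cdot$ bounded" with strong convergence on the bounded factors and norm convergence on the $\cL_1$ factor — yields the claimed $\cL_1$-convergence.

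The main obstacle I anticipate is the careful redistribution of resolvent powers in the second step: one has to arrange, in each summand $(\bsH_2-z)^{-k_0-1}\bsB'(\bsH_1-z)^{-k_1-1}$, that after inserting $(\bsH_0-z)^{\pm\cdot}$ factors the resulting "intertwiner" exponents never exceed $2m-1$ (so that Hypothesis \ref{hyp_final}(iv) and Proposition \ref{big_uniform_bounded} apply) while still isolating a full $\bsB'(\bsH_0-z)^{-m-1}\in\cL_1$. Since $k_0+k_1 = m-1$ and we need to manufacture a total of $m+1$ powers of $(\bsH_0-z)^{-1}$ around $\bsB'$, the count works out, but the choice $m=\lceil p/2\rceil$ and the bound $2m-1$ in Hypothesis \ref{hyp_final}(iv) are precisely calibrated to this estimate and one must track the indices with care. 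A secondary subtlety is that some of the intertwining operators are only defined after taking closures (as in Propositions \ref{big_estimate_on_norm}–\ref{big_so_convergence}), so the manipulations should be read as identities between bounded extensions, justified on the dense domain $\dom(\bsH_0^{m/2})$ via Corollary \ref{big_domains}.
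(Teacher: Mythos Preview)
Your approach is essentially the paper's: expand $(\bsH_2-z)^{-m}-(\bsH_1-z)^{-m}$ via the algebraic identity into terms $-2(\bsH_2-z)^{-k_0-1}\bsB'(\bsH_1-z)^{-k_1-1}$ with $k_0+k_1=m-1$, sandwich each with powers of $(\bsH_0-z)$, invoke Proposition~\ref{big_uniform_bounded} for the intertwiners, then pass to the limit term by term with Lemma~\ref{so_inLp} and Proposition~\ref{big_so_convergence}.

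The one point where your description is off is the isolation of the trace-class factor. You cannot keep $\bsB'(\bsH_0-z)^{-m-1}$ intact: doing so would force an intertwiner of the shape $(\bsH_0-z)^{m+1}(\bsH_1-z)^{-(k_1+1)}$ with $k_1+1\le m$, which is unbounded, so the ``count'' does \emph{not} work out that way. The paper instead splits the $m+1$ resolvent powers on both sides of $\bsB'$, writing each summand as
\[
\overline{(\bsH_2-z)^{-(k_0+1)}(\bsH_0-z)^{k_0+1}}\;\cdot\;(\bsH_0-z)^{-(k_0+1)}\bsB'(\bsH_0-z)^{-(k_1+1)}\;\cdot\;(\bsH_0-z)^{k_1+1}(\bsH_1-z)^{-(k_1+1)},
\]
so that each intertwiner has integer exponent at most $m$ (hence $k\le 2m$ in Proposition~\ref{big_uniform_bounded}, exactly matched by Hypothesis~\ref{hyp_final}(iv)). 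The middle factor $(\bsH_0-z)^{-(k_0+1)}\bsB'(\bsH_0-z)^{-(k_1+1)}$, with $(k_0+1)+(k_1+1)=m+1$, is then shown to lie in $\cL_1$ by the three lines theorem (the paper cites \cite[Theorem~3.2]{GLST15}); this interpolation step is absent from your proposal and is the only substantive gap. Once you make this adjustment, the rest of your argument --- in particular the term-by-term $\cL_1$-limit via Lemma~\ref{so_inLp}, using $\bsP_n(\bsH_0-z)^{-a}\bsB'(\bsH_0-z)^{-b}\bsP_n\to(\bsH_0-z)^{-a}\bsB'(\bsH_0-z)^{-b}$ in $\cL_1$ and strong convergence of the intertwiners from Proposition~\ref{big_so_convergence} --- is exactly what the paper does.
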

\begin{proof}
$(i).$ 
Using again the resolvent identity and the elementary relation
\begin{equation*}
A^k - B^k = \sum_{j=1}^k A^{k-j} [A - B] B^{j-1}, \quad A, B \in \cB(\cH), 
\; k \in \bbN,
\end{equation*}
 we write
\begin{align*}
\left(\bsH_2 -z \right)^{-m}& - \left(\bsH_1-z \right)^{-m}\\
&=\sum_{j=1}^m{(\bsH_2 -z )^{-m+j}}({(\bsH_2 -z)^{-1} }-{(\bsH_1 -z)^{-1}}){(\bsH_1 -z )^{-j+1}}\\
&=-2\sum_{j=1}^m{(\bsH_2 -z )^{-m+j-1}}\bsB'{(\bsH_1 -z )^{-j}}.
\end{align*}
Thus
\begin{align}\label{conv_H_m_trace}
&\left(\bsH_2 -z \right)^{-m} - \left(\bsH_1-z \right)^{-m}=-2\sum_{j=1}^m\overline{{(\bsH_2-z )^{-m+j-1}}(\bsH_0-z)^{m-j+1}}\nonumber\\
&\qquad\times{(\bsH_0-z)^{-m+j-1}}\bsB'{(\bsH_0-z)^{-j}}\times(\bsH_0-z)^{j}{(\bsH_1 -z )^{-j}}
\end{align}

 We note that 
by the three lines theorem (see also \cite[Theorem 3.2]{GLST15}), Hypothesis \ref{hyp_final} (iii) implies that 
\begin{equation}\label{diff_powers_resolvents_big}
(\bsH_0-z)^{-m+j-1}\bsB'(\bsH_0-z)^{-j}\in\cL_1(L^2(\bbR;\cH))
\end{equation}
for all $j=1,\dots,m.$
%
Since, in addition, by Proposition \ref{big_uniform_bounded} (i) and (ii),  the operators  
$\overline{{(\bsH_2-z )^{-m+j-1}}(\bsH_0-z)^{m-j+1}}$ and $(\bsH_0-z)^{j}{(\bsH_1 -z )^{-j}}$ are bounded, we infer that 
$\left(\bsH_2 -z \right)^{-m} - \left(\bsH_1-z \right)^{-m}\in\cL_1(L^2(\bbR;\cH)).$

Arguing similarly, one may obtain that 
\begin{align}\label{conv_H_n_m_trace}
&(\bsH_{2,n} -z )^{-m} - (\bsH_{1,n}-z )^{-m}
\nonumber\\
&=-2\sum_{j=1}^m{(\bsH_{2,n} -z )^{-m+j-1}}\bsP_n\bsB'\bsP_n{(\bsH_{1,n} -z )^{-j-1}}\nonumber\\
&=-2\sum_{j=1}^m\overline{{(\bsH_{2,n}-z )^{-m+j-1}}(\bsH_0-z)^{m-j+1}}\\
&\quad\times\bsP_n{(\bsH_0-z)^{-m+j-1}}\bsB'{(\bsH_0-z)^{-j}}\bsP_n\times(\bsH_0-z)^{j}{(\bsH_{1,n} -z )^{-j}}.\nonumber
\end{align}
Referring to Proposition \ref{big_uniform_bounded}
we have that 
$\left(\bsH_{2,n} -z \right)^{-m} - \left(\bsH_{1,n}-z \right)^{-m}\in\cL_1(L^2(\bbR;\cH)).$

$(ii).$ Using decompositions \eqref{conv_H_m_trace} and \eqref{conv_H_n_m_trace} we see that it is sufficient to prove the convergence of each term separately. 

By \eqref{diff_powers_resolvents_big}, the operator 
${(\bsH_0-z)^{-m+j-1}}\bsB'{(\bsH_0-z)^{-j}}\in{\cL_1(L^2(\bbR;\cH))}$ for all $j=1,\dots,m$, and therefore, by Lemma \ref{so_inLp} we have that 
$$\bsP_n{(\bsH_0-z)^{-m+j-1}}\bsB'{(\bsH_0-z)^{-j}}\bsP_n\xrightarrow{\|\cdot\|_1}{(\bsH_0-z)^{-m+j-1}}\bsB'{(\bsH_0-z)^{-j}}.$$
In addition, by Proposition \ref{big_so_convergence} we have 
$$\overline{{(\bsH_{2,n}-z )^{-m+j-1}}(\bsH_0-z)^{m-j+1}}\to \overline{{(\bsH_{2}-z )^{-m+j-1}}(\bsH_0-z)^{m-j+1}}$$
and 
$$(\bsH_0-z)^{j}{(\bsH_{1,n} -z )^{-j}}\to (\bsH_0-z)^{j}{(\bsH_{1} -z )^{-j}}, \quad j=1,\dots, m,$$
in the strong operator topology. Thus,  another appeal  to 
Lemma \ref{so_inLp}  
 completes the proof. 
\end{proof}

In the conclusion of the present section, we discuss some details of our main assumption, Hypothesis \ref{hyp_final}, for the special path $\{B(t)\}_{t\in\bbR}$ defined as follows. Suppose that a positive function $\theta$ on $\bbR$ satisfies
\begin{align}\label{theta}
\begin{split}
\theta\in C_b^\infty(\bbR),& \quad \theta'\in L_1(\bbR),\\
\lim_{t\to-\infty}\theta(t)=0,&\quad \lim_{t\to+\infty}\theta(t)=1.
\end{split}
\end{align}
and assume that $B_+$ is a $p$-relative perturbation of $A_-$.
Introduce then the family $\{B(t)\}_{t\in\bbR}$ given by 
\begin{equation}\label{Bt_special}
B(t)=\theta(t)B_+.
\end{equation}

Since $\theta'\in L_1(\bbR)$ and $B'(t)(A_-+i)^{-p-1}=\theta'(t)B_+(A_-+i)^{-p-1}$, it follows that the assumptions
 (i) and (ii) of Hypothesis \ref{hyp_initial} are satisfied. Furthermore, an argument similar to the proof of \cite[Proposition 2.2]{CGK16} guarantees that 
$$\bsB'(\bsH_0-z)^{-m-1}\in\cL_1(L^2(\bbR;\cH)),$$
that is, assumption  (iii)  of Hypothesis \ref{hyp_final} is satisfied.

%
%

Thus, for the special type of family $\{B(t)\}_{t\in\bbR}=\{\theta(t)B_+\}_{t\in\bbR}$  assumptions  (ii), (iii) and (iv) of Hypothesis \ref{hyp_final} are automatically guaranteed by the assumption \eqref{theta} { on $\theta$} and the fact that $B_+$ is a $p$-relative trace class perturbation of $A_-$.

Next, we discuss Hypothesis \ref{hyp_final} (iv). By definition of $\delta_{\bsH_0}$ (see \eqref{def_deriv_bsH_0}) to check Hypothesis~\ref{hyp_final}~(iv) we have to consider repeated commutators with $(1+\bsH_0)^{1/2}$. However, in general, it is hard to work with these commutators. Therefore, we use below a different type of commutator argument, in which 
the commutators are more manageable. 

We now follow a method introduced in  \cite[Section 1.3]{CGRS_memoirs} using
the operator defined by
$\bsL_{\bsH_0}^k(\bsT):=\overline{(1+\bsH_0)^{-k/2}[\bsH_0,T]^{(k)}}$ whose domain is 
\begin{align*}
\dom(\bsL_{\bsH_0}^k)=\{ \bsT\in\cB(L^2(\bbR;\cH)): \bsT\dom(\bsH_0^j)\subset \dom(\bsH_0^j), \, j=1,\dots, k\\
\text{ 
and the operator } (1+\bsH_0)^{-k/2}[\bsH_0,\bsT]^{(k)} \text{ defined on }\dom(\bsH_0^{k})\\
\text{ extends to a bounded operator on } L^2(\bbR;\cH)\}.
\end{align*}
%

The following result follows from the proof of \cite[Lemma 1.29]{CGRS_memoirs}.
\begin{lemma}\label{dom_delta_and_L}
If \, $\bsT\in\bigcap_{j=1}^{2k}\dom(\bsL_{\bsH_0}^j)$ for some $k\in\bbN$, then $\bsT\in\bigcap_{j=1}^k\dom(\delta_{\bsH_0}^j)$.
\end{lemma}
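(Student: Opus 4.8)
\textbf{Proof proposal for Lemma \ref{dom_delta_and_L}.}

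The plan is to relate the two families of derivations $\delta_{\bsH_0}$ (commutators with $(1+\bsH_0)^{1/2}$) and $\bsL_{\bsH_0}$ (commutators with $\bsH_0$, normalized by $(1+\bsH_0)^{-k/2}$) by the standard integral representation of the square root. Write, for a positive operator $\bsH_0$,
\begin{equation*}
(1+\bsH_0)^{1/2}=\frac{1}{\pi}\int_0^\infty \lambda^{-1/2}\Big((1+\bsH_0)(1+\lambda+\bsH_0)^{-1}\Big)\,d\lambda + \text{const},
\end{equation*}
or equivalently work with $(1+\bsH_0)^{1/2}=c\int_0^\infty \lambda^{-1/2}\big(1-\lambda(1+\lambda+\bsH_0)^{-1}\big)\,d\lambda$ up to a normalizing constant, so that a single commutator with $(1+\bsH_0)^{1/2}$ becomes $-c\int_0^\infty\lambda^{1/2}(1+\lambda+\bsH_0)^{-1}[\bsH_0,\bsT](1+\lambda+\bsH_0)^{-1}\,d\lambda$. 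Iterating $k$ times, the $k$-th repeated commutator $[(1+\bsH_0)^{1/2},\bsT]^{(k)}$ is expressed as a $k$-fold iterated integral whose integrand is a sum of terms of the form (resolvent) $\cdot [\bsH_0,\bsT]^{(i_1)}\cdot$ (resolvent) $\cdots [\bsH_0,\bsT]^{(i_r)}\cdot$ (resolvent), where the $i_\ell$ are positive integers with $i_1+\dots+i_r\le k$ — because each $(1+\lambda+\bsH_0)^{-1}$ may itself be commuted past later factors, producing nested commutators of $\bsH_0$ with $\bsT$ of total order at most $k$.

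The key estimate is then: if $\bsT\in\bigcap_{j=1}^{2k}\dom(\bsL_{\bsH_0}^j)$, each factor $[\bsH_0,\bsT]^{(i)}$ with $i\le 2k$ satisfies $\|(1+\bsH_0)^{-i/2}[\bsH_0,\bsT]^{(i)}\|<\infty$ (and symmetrically $\|[\bsH_0,\bsT]^{(i)}(1+\bsH_0)^{-i/2}\|<\infty$, using self-adjointness of $\bsH_0$ and of $\bsT$ if needed, or just the closed-graph/adjoint argument as in Proposition \ref{big_uniform_bounded}(i)). Inserting the appropriate powers $(1+\bsH_0)^{i_\ell/2}(1+\bsH_0)^{-i_\ell/2}$ around each commutator factor, and absorbing the half-integer powers of $(1+\bsH_0)$ into the neighbouring resolvents $(1+\lambda+\bsH_0)^{-1}$ (each resolvent can absorb up to two such half-powers, i.e.\ one full power, while retaining a decay $(1+\lambda)^{-c}$ with $c>0$ after accounting for how many commutator-orders it must compensate), one bounds the integrand uniformly in the operator norm by a product of scalars times $\lambda^{\text{(exponent)}}(1+\lambda)^{-\text{(larger exponent)}}$, so the iterated $\lambda$-integrals converge absolutely. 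This shows $[(1+\bsH_0)^{1/2},\bsT]^{(k)}$ extends to a bounded operator; applying the same computation to $[(1+\bsH_0)^{1/2},\bsT]^{(j)}$ for each $j\le k$ (which only requires $\bsT\in\bigcap_{i\le 2j}\dom(\bsL_{\bsH_0}^i)$) gives $\bsT\in\bigcap_{j=1}^k\dom(\delta_{\bsH_0}^j)$. One also needs the domain-invariance clause $\bsT\dom(\bsH_0^{j/2})\subset\dom(\bsH_0^{j/2})$ for $j\le k$; this follows because $\bsT(1+\bsH_0)^{j/2}$ is then shown to be $(1+\bsH_0)^{j/2}$-bounded by the formula \eqref{big_BH_0}-type expansion together with the boundedness of the lower commutators just established.

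The main obstacle I expect is the bookkeeping of the powers of $(1+\bsH_0)$ in the iterated integral: one must verify that when the resolvents are used to soak up the half-integer powers produced by the $\bsL_{\bsH_0}^{i}$-estimates, enough negative power of $(1+\lambda+\bsH_0)$ survives to make every one of the $k$ nested $\lambda$-integrals converge — this is exactly why the hypothesis demands control up to order $2k$ rather than $k$ (each commutator with $(1+\bsH_0)^{1/2}$ "costs" up to two orders of $\bsL_{\bsH_0}$). Since this is precisely the content of the cited argument, I would invoke \cite[Lemma 1.29]{CGRS_memoirs} for the detailed combinatorial estimate and present only the structural reduction above, noting that $\bsH_0$ here plays the role of the Laplacian-type operator in that reference and that all that is used about $\bsH_0$ is its positivity and self-adjointness (established via \cite[Theorem VIII.33]{RS_book_I}).
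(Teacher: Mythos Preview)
Your proposal is correct and follows the same route as the paper, which simply records that the result follows from the proof of \cite[Lemma~1.29]{CGRS_memoirs}; your sketch of the integral representation of $(1+\bsH_0)^{1/2}$ and the absorption of powers of $(1+\bsH_0)$ into the resolvents is precisely the mechanism of that argument. One small imprecision: in the iterated expansion the total commutator order with $\bsH_0$ that ultimately appears can reach $2k$ (not merely $k$) once resolvents are commuted through to secure integrability at infinity, which is the actual source of the factor $2$ in the hypothesis---but since you defer the combinatorics to \cite{CGRS_memoirs} this does not affect the validity of your strategy.
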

%

Next, we want to reduce the commutators with $\bsH_0$ to commutators with $\bsA_-^2$. To this end, for a self-adjoint operator $A$ on $\cH$  we 
introduce the operator
\begin{equation}\label{def_L}
L_{A^2}^k(T)=\overline{(1+A^2)^{-k/2}[A^2,T]^{(k)}}
\end{equation} 
 with domain
\begin{align*}
\dom(L_{A^2}^k)=\{ T\in\cB(\cH): T\dom(A^j)\subset \dom(A^j), \, j=1,\dots, 2k\\
\text{ 
and the operator } (1+A^2)^{-k/2}[A^2,T]^{(k)} \text{ defined on }\dom(A^{2k})\\
\text{ extends to a bounded operator on } \cH\}.
\end{align*}

\begin{proposition}\label{prop_hyp_3}
Let $\{B(t)\}_{t\in\bbR}$ be as in \eqref{Bt_special} with $\theta$ satisfying \eqref{theta}. If $B_+\in \bigcap_{j=1}^{k}\dom(L_{A_-^2}^j),$ for some $k\in\bbN$, then $\bsB,\bsB'\in\bigcap_{j=1}^{k}\dom(\bsL_{\bsH_0}^j).$
\end{proposition}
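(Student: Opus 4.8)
The plan is to reduce the statement about commutators with $\bsH_0$ on $L^2(\bbR;\cH)$ to the corresponding statement about commutators with $A_-^2$ on $\cH$, exploiting the product structure $\bsH_0 = -\frac{d^2}{dt^2}\otimes 1 + 1\otimes A_-^2$ and the fact that $\bsB = M_\theta\otimes B_+$ and $\bsB' = M_{\theta'}\otimes B_+$ are also simple tensor products (where $M_\theta$ denotes multiplication by $\theta$ on $L^2(\bbR)$). The key observation is that since $\frac{d^2}{dt^2}\otimes 1$ and $1\otimes A_-^2$ commute and act on different tensor factors, the commutator $[\bsH_0, M_\theta\otimes B_+]$ splits as
\begin{equation*}
[\bsH_0, M_\theta\otimes B_+] = [-\tfrac{d^2}{dt^2}, M_\theta]\otimes B_+ + M_\theta\otimes [A_-^2, B_+],
\end{equation*}
and the first term is manageable because $\theta\in C_b^\infty(\bbR)$ with all derivatives bounded, so $[-\frac{d^2}{dt^2},M_\theta] = -M_{\theta''} - 2M_{\theta'}\frac{d}{dt}$ is relatively $\bsH_0^{1/2}$-bounded (using \eqref{d/dt_H_0_bounded}).

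First I would set up the iterated commutator $[\bsH_0, \bsB]^{(j)}$ and expand it via the binomial-type formula into a sum of terms of the form $\big([-\frac{d^2}{dt^2}, M_\theta]^{(a)}\big)\otimes\big([A_-^2,B_+]^{(b)}\big)$ with $a+b=j$ (this is valid precisely because the two summands of $\bsH_0$ commute with each other and each commutes across the tensor factors appropriately — one must be slightly careful since $M_\theta$ and $[-\frac{d^2}{dt^2},M_\theta]$ do not commute, but the combinatorial expansion still produces a finite sum of such tensor terms after collecting). Next I would verify that each such term, multiplied on the left by $(1+\bsH_0)^{-j/2}$, extends to a bounded operator. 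For the $A_-^2$-part, the hypothesis $B_+\in\bigcap_{j=1}^k\dom(L_{A_-^2}^j)$ gives that $(1+A_-^2)^{-b/2}[A_-^2,B_+]^{(b)}$ is bounded for $b\le k$, so $[A_-^2,B_+]^{(b)}$ maps $\dom(A_-^{2b})$ into $\cH$ with the stated relative bound; for the $t$-part, each commutator $[-\frac{d^2}{dt^2},M_\theta]^{(a)}$ is a differential operator of order $\le a$ with bounded coefficients (all derivatives of $\theta$ being bounded), hence relatively $(1-\frac{d^2}{dt^2})^{a/2}$-bounded. The point then is that $(1+\bsH_0)^{-j/2}$ dominates the product of $(1-\frac{d^2}{dt^2})^{a/2}$ and $(1+A_-^2)^{b/2}$ up to a bounded factor, because $(1-\frac{d^2}{dt^2})^{a/2}(1+A_-^2)^{b/2}(1+\bsH_0)^{-j/2}$ is bounded by a spectral-calculus / Löwner-type argument on the two commuting self-adjoint operators $-\frac{d^2}{dt^2}$ and $A_-^2$ whenever $a+b=j$. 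This gives $\bsB\in\bigcap_{j=1}^k\dom(\bsL_{\bsH_0}^j)$, and the identical argument with $\theta'$ (also in $C_b^\infty(\bbR)$ since $\theta\in C_b^\infty$) in place of $\theta$ gives $\bsB'\in\bigcap_{j=1}^k\dom(\bsL_{\bsH_0}^j)$.

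I would also need to check the domain-invariance conditions in the definition of $\dom(\bsL_{\bsH_0}^j)$, namely that $\bsB$ and $\bsB'$ preserve $\dom(\bsH_0^j)$ for $j=1,\dots,k$. Since $\dom(\bsH_0^j)$ can be described as the intersection of $W^{2j,2}(\bbR;\cH)$-type conditions in $t$ with $\dom(A_-^{2j})$-type conditions — more precisely $\dom(\bsH_0^j)$ is contained in $\bigcap_{a+b=j}\dom\big((-\frac{d^2}{dt^2})^a\otimes A_-^{2b}\big)$ — the invariance follows from the smoothness of $\theta$ (so $M_\theta$ preserves Sobolev spaces in $t$) together with the assumed invariance of $\dom(A_-^{2b})$ under $B_+$ coming from $B_+\in\dom(L_{A_-^2}^b)$.

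The main obstacle I anticipate is making the tensor-product commutator expansion and the relative boundedness estimate fully rigorous at the level of domains: the operators $M_\theta\otimes 1$ and $1\otimes B_+$ are bounded so composing them is harmless, but the unbounded pieces $-\frac{d^2}{dt^2}\otimes 1$ and $1\otimes A_-^2$ require that one tracks carefully on which core the iterated commutator identity holds (the natural choice being finite linear combinations of $\phi\otimes\psi$ with $\phi\in C_c^\infty(\bbR)$ and $\psi\in\dom(A_-^{2k})$, or $\dom(\bsH_0^k)$ directly) and then extends by density using the uniform bounds. A secondary technical point is confirming that the binomial expansion of $[\bsH_0,\cdot]^{(j)}$ into tensor terms is clean despite $M_\theta$ and $[-\frac{d^2}{dt^2},M_\theta]$ not commuting; this is handled by noting that all terms that arise are still of the form (differential operator of order $\le a$ in $t$ with $C_b^\infty$ coefficients)$\otimes[A_-^2,B_+]^{(b)}$ with $a+b=j$, which is all that the estimate needs. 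Once these bookkeeping issues are dispatched, the result follows, and combined with Lemma \ref{dom_delta_and_L} it feeds directly into verifying Hypothesis \ref{hyp_final} (iv) for the special path.
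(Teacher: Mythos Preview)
Your proposal is correct and follows essentially the same route as the paper: exploit the tensor decomposition $\bsH_0=-\tfrac{d^2}{dt^2}\otimes 1+1\otimes A_-^2$ and $\bsB=M_\theta\otimes B_+$, expand $[\bsH_0,\bsB]^{(j)}$ as a sum of terms $[-\tfrac{d^2}{dt^2},M_\theta]^{(a)}\otimes[A_-^2,B_+]^{(b)}$ with $a+b=j$, and then bound $(1+\bsH_0)^{-j/2}$ times each term using that $(1-\tfrac{d^2}{dt^2})^{a/2}(1+A_-^2)^{b/2}(1+\bsH_0)^{-j/2}$ is bounded together with the hypothesis on $B_+$ and the smoothness of $\theta$. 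Your concern about the cleanliness of the binomial expansion is unnecessary: the two derivations $[-\tfrac{d^2}{dt^2}\otimes 1,\cdot]$ and $[1\otimes A_-^2,\cdot]$ commute (since the underlying operators do), so the expansion $(\delta_1+\delta_2)^j=\sum_l\binom{j}{l}\delta_1^l\delta_2^{j-l}$ is exact and each $\delta_1^l\delta_2^{j-l}(M_\theta\otimes B_+)=[-\tfrac{d^2}{dt^2},M_\theta]^{(l)}\otimes[A_-^2,B_+]^{(j-l)}$, with no cross-terms to collect.
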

\begin{proof}We prove the assertion for $\bsB$ only, as the assertion for $\bsB'$ can be proved similarly. 
First, identifying the Hilbert spaces $L^2(\bbR;\cH)$ and $L^2(\bbR)\otimes H$, we have  
$$\bsH_0=\frac{d^2}{dt^2}+\bsA_-^2=\frac{d^2}{dt^2}\otimes 1+1\otimes A_-^2.$$
Therefore, since $\theta\in C_b^\infty(\bbR)$ and $B_+\dom(A_-^j)\subset \dom(A_-^j), \, j=1,\dots, 2k,$ it follows that the operator $\bsB=M_\theta\otimes B_+$ leaves $\dom(\bsH_0^j), j=1,\dots, k$, invariant. 

Furthermore, on $\dom(\bsH_0)$ we have 
\begin{align*}
[\bsH_0,\bsB]&=[\frac{d^2}{dt^2}\otimes 1, M_\theta\otimes B_+]+[1\otimes A_-^2,M_\theta\otimes B_+]\\
&=[\frac{d^2}{dt^2}, M_\theta]\otimes B_++M_\theta\otimes [A_-^2,\otimes B_+].
\end{align*}
Hence, on $\dom (\bsH_0^k)$ we have 
$[\bsH_0,M_\theta\otimes B_+]^{(k)}=\sum_{l=0}^k [\frac{d^2}{dt^2}, M_{\theta}]^{(l)}\otimes [A_-^2,B_+]^{(k-l)}.$

%

The operator 
$\bsC:=\Big(1-\frac{d^2}{dt^2}\Big)^{\frac{l}{2}}\Big(1+\bsA_-^2\Big)^{\frac{k-l}{2}}(1+\bsH_0)^{-k/2}$ is bounded for any $l=0,\dots, k$ By Lemma \ref{lem_A_H_0_bounded}.
Hence, on $\dom (\bsH_0^k)$ we have 
\begin{align*}
\bsL_{\bsH_0}^k(\bsB)&=(1+\bsH_0)^{-k/2}[\bsH_0,M_\theta\otimes B_+]^{(k)}\\
&=(1+\bsH_0)^{-k/2}\sum_{l=0}^k [\frac{d^2}{dt^2}, M_{\theta}]^{(l)}\otimes [A_-^2,B_+]^{(k-l)}\\
&=\sum_{l=0}^k \bsC\cdot 
(1-\frac{d^2}{dt^2})^{-\frac{l}{2}}[\frac{d^2}{dt^2}, M_{\theta}]^{(l)} \otimes (1+A_-^2)^{-\frac{k-l}2}[A_-^2,B_+]^{(k-l)}.
\end{align*}
It follows from inclusion \eqref{inc_L_laplace} below (for $n=1$) that $(1-\frac{d^2}{dt^2})^{-\frac{l}{2}}[\frac{d^2}{dt^2}, M_{\theta}]^{(l)}$ extends to a bounded operator on $L^2(\bbR)$ for any $l=0,\dots,k$. By assumption the operator $(1+A_-^2)^{-{k-l}/2}[A_-^2,B_+]^{(k-l)}$ extends to a bounded operator on $\cH$. Therefore, $\bsL_{\bsH_0}^k(\bsB)$ also extends to a bounded operator on $L^2(\bbR;\cH)$, as required. 
\end{proof}

We now formulate the Hypothesis \ref{hyp_final} for the special case when the family $\{B(t)\}_{t\in\bbR}$ is given by $\{\theta(t)B_+\}_{t\in\bbR}$. 
\begin{hypothesis}\label{hyp_final_special}
\begin{enumerate}
\item Assume that $A_-$ is self-adjoint on $\dom(A_-) \subseteq \cH$ 
and let $\theta$ satisfy \eqref{theta}.
\item Suppose that an operator $B_+$ is a $p$-relative trace-class perturbation of $A_-$ for some $p\in\bbN$, that is 
$B_+(A_-+i)^{-p-1}\in\cL_1(\cH).$
\item Assume also that $B_+\in \bigcap_{j=1}^{2p}\dom(L_{A_-^2}^j),$ where the mapping $L_{A_-^2}^j$ is defined by \eqref{def_L}. 
\end{enumerate}

\end{hypothesis}

For convenience, we state the following
\begin{proposition} \label{prop_hyp_special}
For the special case when $\{B(t)\}_{t\in\bbR}=\{\theta(t)B_+\}_{t\in\bbR}$, Hypothesis \ref{hyp_final_special} guarantees that Hypothesis \ref{hyp_final} is satisfied. 
\end{proposition}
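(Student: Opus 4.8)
The plan is to check the four conditions of Hypothesis~\ref{hyp_final} in turn for the family $B(t)=\theta(t)B_+$, taking $p$ to be the integer supplied by Hypothesis~\ref{hyp_final_special}(ii) and $m=\lceil p/2\rceil$; most of the verifications are already recorded in the paragraphs preceding Hypothesis~\ref{hyp_final_special}, so the proof is largely a matter of assembling them. The self-adjointness assumption on $A_-$ transfers verbatim. For conditions (i) and (ii): since $\theta\in C_b^\infty(\bbR)$ by \eqref{theta}, the map $t\mapsto B(t)=\theta(t)B_+$ is $C^1$ in operator norm with $B'(t)=\theta'(t)B_+$, so $\|B'(\cdot)\|=|\theta'(\cdot)|\,\|B_+\|$ lies in $L^1(\bbR)\cap L^\infty(\bbR)$ because $\theta'\in L_1(\bbR)$ is bounded; and $B'(t)(A_-+i)^{-p-1}=\theta'(t)\,B_+(A_-+i)^{-p-1}\in\cL_1(\cH)$ with $\int_\bbR\|B'(t)(A_-+i)^{-p-1}\|_1\,dt=\|B_+(A_-+i)^{-p-1}\|_1\int_\bbR|\theta'(t)|\,dt<\infty$, using the $p$-relative trace class hypothesis on $B_+$.

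For condition (iii) I would argue, as already indicated before Hypothesis~\ref{hyp_final_special}, by adapting the proof of \cite[Proposition 2.2]{CGK16}. Writing $L^2(\bbR;\cH)=L^2(\bbR)\otimes\cH$, one has $\bsB'=M_{\theta'}\otimes B_+$ and $\bsH_0=-\tfrac{d^2}{dt^2}\otimes 1+1\otimes A_-^2$; the point is to distribute the $m+1$ powers of $(\bsH_0-z)^{-1}$ between the two tensor legs so that the factor $\theta'\in L_1(\bbR)\cap L^\infty(\bbR)$ handles the $L^2(\bbR)$ leg while the factor $B_+(1+A_-^2)^{-(p+1)/2}\in\cL_1(\cH)$ (equivalent to Hypothesis~\ref{hyp_final_special}(ii)) handles the $\cH$ leg, thereby showing $\bsB'(\bsH_0-z)^{-m-1}\in\cL_1(L^2(\bbR;\cH))$ for all $z<0$. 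This is the only genuinely technical step, and the main obstacle: neither tensor leg alone carries enough resolvent powers, so one must track the trace-norm estimate through the splitting exactly as in \cite{CGK16}.

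For condition (iv) I would combine Proposition~\ref{prop_hyp_3} with Lemma~\ref{dom_delta_and_L}. By Hypothesis~\ref{hyp_final_special}(iii), $B_+\in\bigcap_{j=1}^{2p}\dom(L_{A_-^2}^j)$, whence Proposition~\ref{prop_hyp_3} (with $k=2p$) gives $\bsB,\bsB'\in\bigcap_{j=1}^{2p}\dom(\bsL_{\bsH_0}^j)$, and then Lemma~\ref{dom_delta_and_L} (with $k=p$, so $2k=2p$) gives $\bsB,\bsB'\in\bigcap_{j=1}^{p}\dom(\delta_{\bsH_0}^j)$. Since $2m-1=2\lceil p/2\rceil-1\leq p$ (with equality when $p$ is odd, and $=p-1$ when $p$ is even), we get $\bigcap_{j=1}^{p}\dom(\delta_{\bsH_0}^j)\subseteq\bigcap_{j=1}^{2m-1}\dom(\delta_{\bsH_0}^j)$, which is precisely condition (iv). Finally, by the minimality convention in Hypothesis~\ref{hyp_final}, the $p$ appearing there is no larger than the one used here, so the chosen $p$ and $m$ are admissible. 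Altogether Hypothesis~\ref{hyp_final} holds.
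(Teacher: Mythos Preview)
Your proof is correct and follows the same approach as the paper: the paper does not give a separate proof of Proposition~\ref{prop_hyp_special} but instead derives each item of Hypothesis~\ref{hyp_final} in the discussion preceding the proposition (conditions (i)--(ii) from \eqref{theta} and the $p$-relative trace class assumption, condition (iii) by reference to \cite[Proposition 2.2]{CGK16}, and condition (iv) via Proposition~\ref{prop_hyp_3} and Lemma~\ref{dom_delta_and_L}), and you have assembled exactly these ingredients in the right order with the correct bookkeeping on $2m-1\le p$.
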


\section{The principal trace formula}\label{ch_principla trace formula}

In this section we prove the fundamental result of the present paper, the principal trace formula \eqref{ch_principla trace formula_intro_formula},
 which states that
\begin{equation*}
 \tr\Big(e^{-t\bsH_2}-e^{-t\bsH_1}\Big)=-\Big(\frac{t}{\pi}\Big)^{1/2}\int_0^1\tr\Big(e^{-tA_s^2}(A_+-A_-)\Big)ds,\quad t>0,
 \end{equation*}
where $A_s=A_-+s(A_+-A_-), s\in[0,1]$ is the straight line path joining $A_-$ and $A_+$ (see Theorem \ref{thm_principla trace formula}).
 As mentioned in the introduction, the principal trace formula allows us to establish  further results for the Witten index (see Section \ref{ch_WI} below). In fact the results of Sections  \ref{ch_WI} are (almost immediate) corollaries of the principal trace formula.  

Our approach to the proof of the principal trace formula relies on an approximation method in which we use results already known for the path $\{A_n(t)\}_{t\in\bbR}$ of reduced operators
$A_n(t)=A_-+P_nB(t)P_n,$ where, as before, $P_n=\chi_{[-n,n]}(A_-)$. We first recall a result from \cite{CGPST_Witten}. The notation $\erf$ stands for the error function 
\begin{equation}\label{erf_def}
\erf(x) = \f{2}{\pi^{1/2}} \int_0^x  e^{- y^2}\, dy, \quad x \in \bbR.
\end{equation}

\begin{proposition}\cite[Example B.6 (ii) and Theorem B.5]{CGPST_Witten}
\label{prop_principla trace formula_for_reduced}
For the path $\{A_n(t)\}_{t\in\bbR}$ of reduced operators we have that $$e^{-t\bsH_{2,n}}-e^{-t\bsH_{1,n}}\in\cL_1(L^2(\bbR;\cH)),\quad \erf(t^{1/2}A_{+,n})-\erf(t^{1/2}A_-)\in\cL_1(\cH)$$ and the equation
\begin{equation}\label{principla trace formula_for_reduced}
\tr\Big(e^{-t\bsH_{2,n}}-e^{-t\bsH_{1,n}}\Big)=-\frac12\tr\Big(\erf(t^{1/2}A_{+,n})-\erf(t^{1/2}A_-)\Big),
\end{equation}
holds.
\end{proposition}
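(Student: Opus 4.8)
The statement is the \emph{relatively trace class} instance of the principal trace formula, which is precisely \cite[Theorem~B.5 and Example~B.6(ii)]{CGPST_Witten} (compare also the original computations in \cite{GS88, GLMST}). The plan is therefore twofold: first check that the reduced family $\{A_n(t)\}_{t\in\bbR}$ satisfies the hypotheses under which that result is stated, and second (for completeness) reconstruct the proof of the trace class version along the lines below. Applicability is immediate from Proposition~\ref{red_sat_Push}: the perturbations $B_n(t)=P_nB(t)P_n$ are trace class for each $t$, and since $\|B_n'(t)\|_1\le(n^2+1)^{(p+1)/2}\|B'(t)(A_-+i)^{-p-1}\|_1$ we also get $\int_\bbR\|B_n'(t)\|_1\,dt<\infty$, so $\{B_n(t)\}_{t\in\bbR}$ is exactly of the type treated in \cite{Pu08,GLMST,CGPST_Witten}.

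\textbf{Trace class memberships.} Since $A_{+,n}-A_-=B_{+,n}=P_nB_+P_n\in\cL_1(\cH)$ and the function $x\mapsto\erf(t^{1/2}x)$ has Schwartz-class derivative $\tfrac{2t^{1/2}}{\pi^{1/2}}e^{-tx^2}$ (so its derivative has $L^1$ Fourier transform), the standard Lipschitz bound for operator functions on $\cL_1$ gives $\erf(t^{1/2}A_{+,n})-\erf(t^{1/2}A_-)\in\cL_1(\cH)$, with trace norm $=O(t^{1/2})$ as $t\to0^+$. For the semigroup difference I would use Duhamel's formula together with $\bsH_{2,n}-\bsH_{1,n}=2\bsB_n'$,
\begin{equation*}
e^{-t\bsH_{2,n}}-e^{-t\bsH_{1,n}}=-2\int_0^t e^{-(t-\tau)\bsH_{2,n}}\,\bsB_n'\,e^{-\tau\bsH_{1,n}}\,d\tau ,
\end{equation*}
insert powers of $\bsH_0+1$ on either side of $\bsB_n'=\bsP_n\bsB'\bsP_n$, and combine the uniform bounds for $(\bsH_0+1)^{N}e^{-s\bsH_{j,n}}$ (Proposition~\ref{big_uniform_bounded} and the self-adjoint functional calculus) with $(\bsH_0+1)^{-N}\bsB_n'(\bsH_0+1)^{-N}\in\cL_1$ for $N$ large (Hypothesis~\ref{hyp_final}(iii), reinforced by the cut-off $\bsP_n$). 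Dominated convergence in $\tau$ then yields $e^{-t\bsH_{2,n}}-e^{-t\bsH_{1,n}}\in\cL_1(L^2(\bbR;\cH))$ with trace norm $=O(t)$ as $t\to0^+$.

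\textbf{The trace identity.} Both sides of \eqref{principla trace formula_for_reduced} are real-valued $C^1$ functions of $t>0$, and by the two estimates just mentioned both tend to $0$ as $t\to0^+$; hence it suffices to match their $t$-derivatives, i.e. to prove
\begin{equation*}
\tr\big(\bsH_{1,n}e^{-t\bsH_{1,n}}-\bsH_{2,n}e^{-t\bsH_{2,n}}\big)=-\frac{1}{2\pi^{1/2}t^{1/2}}\,\tr\big(A_{+,n}e^{-tA_{+,n}^2}-A_-e^{-tA_-^2}\big).
\end{equation*}
Here one uses the intertwining $\bsD_{\bsA_n}e^{-t\bsH_{1,n}}=e^{-t\bsH_{2,n}}\bsD_{\bsA_n}$ coming from $\bsH_{1,n}=\bsD_{\bsA_n}^{*}\bsD_{\bsA_n}$, $\bsH_{2,n}=\bsD_{\bsA_n}\bsD_{\bsA_n}^{*}$. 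A naive cyclic rearrangement would give $\tr(\bsH_{2,n}e^{-t\bsH_{2,n}})=\tr(\bsD_{\bsA_n}^{*}\bsD_{\bsA_n}e^{-t\bsH_{1,n}})=\tr(\bsH_{1,n}e^{-t\bsH_{1,n}})$, hence a vanishing derivative — this is the Fredholm (index) conclusion, and it fails in general because $e^{-t\bsH_{j,n}}$ is not itself trace class on the non-compact line $\bbR$. Introducing a cut-off in the $t$-variable (multiplication by $\chi_{[-R,R]}$, or a smooth variant) makes the cyclic manipulation legitimate; letting $R\to\infty$, the $[\partial_t,\,\cdot\,]$-terms survive as boundary contributions at $t=\pm\infty$, and evaluating them with the explicit one-dimensional free resolvent/heat kernel along $\bbR$ and $A_n(+\infty)=A_{+,n}$, $A_n(-\infty)=A_-$ produces exactly the right-hand side, the function $\erf(t^{1/2}\,\cdot\,)$ arising as the antiderivative $\int_0^{t^{1/2}}e^{-y^2}\,dy$ in the telescoping. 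Finally, the $\erf$-form \eqref{principla trace formula_for_reduced} is equivalent to the integral form in \eqref{ch_principla trace formula_intro_formula}: with $A_{s,n}=A_-+sB_{+,n}$ one has $\frac{d}{ds}\tr\big(\erf(t^{1/2}A_{s,n})\big)=\frac{2t^{1/2}}{\pi^{1/2}}\tr\big(e^{-tA_{s,n}^2}B_{+,n}\big)$, so integrating over $s\in[0,1]$ turns $-\tfrac12\tr\big(\erf(t^{1/2}A_{+,n})-\erf(t^{1/2}A_-)\big)$ into $-(t/\pi)^{1/2}\int_0^1\tr\big(e^{-tA_{s,n}^2}B_{+,n}\big)\,ds$.

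\textbf{Main obstacle.} The genuinely delicate step is the last one in the previous paragraph: making the cyclicity-of-the-trace argument rigorous in the presence of the non-trace-class semigroups $e^{-t\bsH_{j,n}}$ and correctly identifying the surviving boundary term at $t=\pm\infty$ as the difference of $\erf$-functions of the asymptotes. This is exactly the $t$-cut-off/limit analysis and the explicit one-dimensional kernel computation carried out in \cite{CGPST_Witten} (cf.\ \cite{GS88, GLMST}); everything else is bookkeeping with Duhamel's formula and the uniform bounds of Section~\ref{sec_unifbound}.
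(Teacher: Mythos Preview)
The paper does not prove this proposition at all; it is stated purely as a citation of \cite[Example~B.6(ii) and Theorem~B.5]{CGPST_Witten}, with applicability to the reduced path $\{A_n(t)\}_{t\in\bbR}$ ensured by Proposition~\ref{red_sat_Push}. Your opening paragraph captures exactly this, and that is all the paper itself does here.

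Your additional reconstruction of the cited argument is a reasonable sketch, and your identification of the cyclicity-of-trace step (with the spatial cut-off $\chi_{[-R,R]}$ and the boundary contributions from the asymptotes $A_{+,n},A_-$) as the main obstacle is accurate; this is indeed the heart of the computation going back to \cite{GS88}. Two minor remarks: for the trace-class membership of $e^{-t\bsH_{2,n}}-e^{-t\bsH_{1,n}}$ you invoke Hypothesis~\ref{hyp_final}(iii) and Proposition~\ref{big_uniform_bounded}, but in the reduced setting one does not need these --- since $B_n'(t)\in\cL_1(\cH)$ with $\int_\bbR\|B_n'(t)\|_1\,dt<\infty$, the argument of \cite{GLMST} applies directly without the machinery of Section~\ref{sec_unifbound}. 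Second, your final remark converting the $\erf$-form into the integral form $-(t/\pi)^{1/2}\int_0^1\tr(e^{-tA_{s,n}^2}B_{+,n})\,ds$ via the Daletski--Krein formula is not part of this proposition but is precisely what the paper does next, in Lemma~\ref{lem_principla trace formula_reduced}.
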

%
%

\begin{remark}
We note that under the assumption of Hypothesis \ref{hyp_final}, one can not pass to the limit as $n\to\infty$ in \eqref{principla trace formula_for_reduced}, in general.
 Indeed, consider the case, 
when the operator $A_-$ is the two-dimensional Dirac operator (see Section  \ref{ch_examples}) and the perturbed operator $A_+$ is given by 
$A_+=A_-+1\otimes M_f,\quad f\in S(\bbR).$ 
The family $\{\theta(t) \otimes M_f\}_{t\in\bbR}$, with $\theta$ given by \eqref{theta}, satisfies Hypothesis \ref{hyp_final} (see Section \ref{ch_examples}). However, by \cite[Theorem 1.2 (i)]{LSVZ_function_g} we have that the operator 
$$\erf(t^{1/2}A_{+})-\erf(t^{1/2}A_-)$$
is not a trace-class operator. 
\hfill $\diamond$
\end{remark}

To overcome the obstacle mentioned in the above remark we firstly 
rewrite the principal trace formula obtained in Proposition \ref{prop_principla trace formula_for_reduced} for the reduced operators. 
 
 \begin{lemma}\label{lem_principla trace formula_reduced}
 For the path $\{A_n(t)\}_{t\in\bbR}$ of reduced operators we have
 \begin{equation*}
\tr\Big(e^{-t\bsH_{2,n}}-e^{-t\bsH_{1,n}}\Big)=-\Big(\frac{t}{\pi}\Big)^{1/2}\int_0^1\tr\Big(e^{-tA_{s,n}^2}(A_{+,n}-A_-)\Big)ds,
\end{equation*}
where $A_{s,n}=A_-+sP_nB_+P_n$, \, $s\in[0,1]$.
 \end{lemma}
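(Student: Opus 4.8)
The plan is to reduce the claim, via Proposition~\ref{prop_principla trace formula_for_reduced}, to a statement about the one-parameter family $\{A_{s,n}\}_{s\in[0,1]}$ and then to run a chain-rule argument along that family. Put $g(x)=\erf(t^{1/2}x)$; the substitution $y=t^{1/2}u$ in \eqref{erf_def} gives $g(x)=\tfrac{2t^{1/2}}{\pi^{1/2}}\int_0^x e^{-tu^2}\,du$, hence $g'(x)=\tfrac{2t^{1/2}}{\pi^{1/2}}e^{-tx^2}\in S(\bbR)$. Recall that $B_{+,n}:=P_nB_+P_n=A_{+,n}-A_-$ lies in $\cL_1(\cH)$, so $A_{+,n}$ is a trace-class perturbation of $A_-$ and $\{A_{s,n}=A_-+sB_{+,n}\}_{s\in[0,1]}$ is a straight-line path joining $A_-$ and $A_{+,n}$ with constant derivative $B_{+,n}\in\cL_1(\cH)$. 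In view of Proposition~\ref{prop_principla trace formula_for_reduced} it then suffices to prove
\begin{equation}\label{claim_erf_reduced}
\tr\big(g(A_{+,n})-g(A_-)\big)=\frac{2t^{1/2}}{\pi^{1/2}}\int_0^1\tr\big(e^{-tA_{s,n}^2}B_{+,n}\big)\,ds,
\end{equation}
since inserting \eqref{claim_erf_reduced} into \eqref{principla trace formula_for_reduced} and recalling $B_{+,n}=A_{+,n}-A_-$ yields exactly the asserted identity.

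To prove \eqref{claim_erf_reduced} I would use the differentiability of operator functions along the path $\{A_{s,n}\}$. Running the argument behind Proposition~\ref{prop_principla trace formula_for_reduced} with $sB_{+,n}$ in place of $B_{+,n}$ shows $g(A_{s,n})-g(A_-)\in\cL_1(\cH)$ for every $s\in[0,1]$, and, since $g'\in S(\bbR)$ and $B_{+,n}\in\cL_1(\cH)$, the map $s\mapsto g(A_{s,n})-g(A_-)$ is differentiable in $\cL_1(\cH)$-norm with
$$\frac{d}{ds}\big(g(A_{s,n})-g(A_-)\big)=T^{A_{s,n},A_{s,n}}_{g^{[1]}}(B_{+,n}),$$
the double operator integral with divided-difference symbol $g^{[1]}(\lambda,\mu)=\big(g(\lambda)-g(\mu)\big)/(\lambda-\mu)$; the continuity of double operator integrals in their spectral parameters that underlies this computation is precisely the content of Theorem~\ref{DOI_converge_pointwise}. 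Applying the trace (which is $\cL_1$-continuous and commutes with Bochner integration) and using the elementary identity $\tr\big(T^{A,A}_{g^{[1]}}(V)\big)=\tr\big(g'(A)V\big)$ for $V\in\cL_1(\cH)$ (immediate from the spectral theorem applied to $A$), we obtain
$$\frac{d}{ds}\,\tr\big(g(A_{s,n})-g(A_-)\big)=\tr\big(g'(A_{s,n})B_{+,n}\big)=\frac{2t^{1/2}}{\pi^{1/2}}\,\tr\big(e^{-tA_{s,n}^2}B_{+,n}\big).$$
Since $A_{s,n}-A_{s',n}=(s-s')B_{+,n}$ is bounded, $e^{-tA_{s,n}^2}\to e^{-tA_{s',n}^2}$ in the strong operator topology as $s\to s'$, so Lemma~\ref{so_inLp} (equivalently, Proposition~\ref{principla trace formula_right-hand side_well_defined}) shows that $s\mapsto e^{-tA_{s,n}^2}B_{+,n}$ is $\cL_1$-continuous and hence the last right-hand side is continuous in $s$. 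Integrating from $0$ to $1$, and noting $g(A_{0,n})-g(A_-)=0$ while $A_{1,n}=A_{+,n}$, yields \eqref{claim_erf_reduced}.

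The hard part is the $\cL_1$-norm differentiability of $s\mapsto g(A_{s,n})$ together with the trace formula $\tr\big(T^{A,A}_{g^{[1]}}(V)\big)=\tr\big(g'(A)V\big)$; both are standard in the theory of double operator integrals but require some care to cite precisely, since $g=\erf(t^{1/2}\cdot)$ itself is not Schwartz (only $g'$ is), so Proposition~\ref{prop_final_DOI} does not apply to $g$ directly. Should one prefer to avoid this machinery, an alternative is to establish \eqref{claim_erf_reduced} first for finite-rank self-adjoint perturbations $V_k$ in place of $B_{+,n}$ — where, after diagonalising, it reduces to an elementary computation with Krein's trace formula for finite-rank perturbations — and then to pass to the limit $V_k\to B_{+,n}$ in $\cL_1(\cH)$, using that the left-hand side of \eqref{claim_erf_reduced} depends $\cL_1$-continuously on the perturbation by Proposition~\ref{prop_final_DOI} and Theorem~\ref{DOI_converge_pointwise}, and the right-hand side does so by Lemma~\ref{so_inLp} and dominated convergence exactly as in Proposition~\ref{prop_conv_integral_exp}.
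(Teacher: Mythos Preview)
Your approach is correct and follows the same route as the paper: reduce via Proposition~\ref{prop_principla trace formula_for_reduced} to the identity \eqref{claim_erf_reduced}, then establish that identity by a fundamental-theorem-of-calculus argument along the straight-line path $\{A_{s,n}\}$. The only difference is packaging: the paper simply cites the Daletski--Krein formula \cite{BS_DOI_III} (see also \cite{Simon_PAMS}) for \eqref{claim_erf_reduced}, whereas you sketch its proof via double operator integrals. Your own caveat is on point: the paper's Proposition~\ref{prop_final_DOI} and Theorem~\ref{DOI_converge_pointwise} are stated for $f\in S(\bbR)$ and so do not apply directly to $g=\erf(t^{1/2}\cdot)$; the Daletski--Krein formula covers exactly this case (trace-class perturbation, $g$ with bounded derivative in a suitable class) and is the cleanest way to close the gap you identified.
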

 
\begin{proof}
By Proposition \ref{prop_principla trace formula_for_reduced} we have 
$$\tr\Big(e^{-t\bsH_{2,n}}-e^{-t\bsH_{1,n}}\Big)=-\frac12\tr\Big(\erf(t^{1/2}A_{+,n})-\erf(t^{1/2}A_-)\Big).$$

Since the operator $B_{+,n}=A_{+,n}-A_-$ is a trace-class operator (see \eqref{B_+,n_trace_class}), it follows that the path $B_{s,n}=s(A_{+,n}-A_-)$ is a $C^1$-path of trace-class operators. Applying now  Daletski-Krein formula \cite{BS_DOI_III} (see also  \cite{Simon_PAMS}) for this path we obtain that 
\begin{equation}\label{FTC_erf}
\frac12\tr\Big(\erf(t^{1/2}A_{+,n})-\erf(t^{1/2}A_-)\Big)=\Big(\frac{t}{\pi}\Big)^{1/2}\int_0^1\tr\Big(e^{-tA_{s,n}^2}(A_{+,n}-A_-)\Big)ds.
\end{equation}
Hence, 
\begin{equation*}
\tr\Big(e^{-t\bsH_{2,n}}-e^{-t\bsH_{1,n}}\Big)=-\Big(\frac{t}{\pi}\Big)^{1/2}\int_0^1\tr\Big(e^{-tA_{s,n}^2}(A_{+,n}-A_-)\Big)ds,
\end{equation*}
as required.
\end{proof}

We now ready to prove the principal trace formula in its heat kernel version, which is the main result of this paper.
\begin{theorem}[The principal trace formula]\label{thm_principla trace formula}Assume Hypothesis \ref{hyp_final}. 
Let $A_s=A_-+s(A_+-A_-), s\in[0,1],$ be the straight line path joining $A_-$ and $A_+$.
 Then for all $t>0$, we have 
$$\tr\Big(e^{-t\bsH_2}-e^{-t\bsH_1}\Big)=-\Big(\frac{t}{\pi}\Big)^{1/2}\int_0^1\tr\Big(e^{-tA_s^2}(A_+-A_-)\Big)ds.$$
\end{theorem}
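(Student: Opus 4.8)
The plan is to establish the principal trace formula by passing to the limit $n\to\infty$ in the reduced version proved in Lemma~\ref{lem_principla trace formula_reduced}, using the two approximation results already in hand: Theorem~\ref{left-hand side_conver_in_L1} for the left-hand side and Proposition~\ref{prop_conv_integral_exp} for the right-hand side. Concretely, for each $n$ we have the identity
\[
\tr\Big(e^{-t\bsH_{2,n}}-e^{-t\bsH_{1,n}}\Big)=-\Big(\tfrac{t}{\pi}\Big)^{1/2}\int_0^1\tr\Big(e^{-tA_{s,n}^2}(A_{+,n}-A_-)\Big)\,ds ,
\]
with $A_{+,n}-A_- = B_{+,n}=P_nB_+P_n$, and the task is to show both sides converge to the corresponding expressions with the cut-off removed.

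For the right-hand side this is immediate: Proposition~\ref{prop_conv_integral_exp} (applied with $B=B_+$, $B_n=B_{+,n}$) gives exactly $\int_0^1\tr(e^{-tA_{s,n}^2}B_{+,n})\,ds\to\int_0^1\tr(e^{-tA_s^2}B_+)\,ds$, and since $A_+-A_- = B_+$ this is the desired limit. For the left-hand side I would first reduce the trace of the heat-semigroup difference to the trace-norm convergence of a resolvent-power difference. The mechanism is the standard one: write $f(x)=e^{-tx^2}$ (or rather $e^{-tx}$ evaluated on the nonnegative operators $\bsH_j$), note that $f\in S(\bbR)$, and invoke Proposition~\ref{prop_final_DOI} with the odd integer $m$ replaced by a suitable odd number $\geq m$ (or simply apply it to $\bsH_2,\bsH_1$ using the trace-class difference $(\bsH_2-z)^{-m}-(\bsH_1-z)^{-m}$ from Theorem~\ref{left-hand side_conver_in_L1}(i), after squaring $m$ up to an odd exponent if parity requires it) to get
\[
e^{-t\bsH_2}-e^{-t\bsH_1}=\sum_{j=1,2}T^{\bsH_2,\bsH_1}_{f,a_j}\big((\bsH_2-a_ji)^{-m'}-(\bsH_1-a_ji)^{-m'}\big)\in\cL_1,
\]
and likewise for the reduced operators. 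Then Theorem~\ref{left-hand side_conver_in_L1}(ii) supplies the $\cL_1$-convergence of the resolvent-power differences, Lemma~\ref{approx_H_jn} gives strong resolvent convergence $\bsH_{j,n}\to\bsH_j$, and Theorem~\ref{DOI_converge_pointwise} then yields $\|[e^{-t\bsH_{2,n}}-e^{-t\bsH_{1,n}}]-[e^{-t\bsH_2}-e^{-t\bsH_1}]\|_1\to 0$. Taking traces and using continuity of $\tr$ on $\cL_1$ finishes the passage to the limit on the left.

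Combining the two convergences with Lemma~\ref{lem_principla trace formula_reduced} gives
\[
\tr\Big(e^{-t\bsH_2}-e^{-t\bsH_1}\Big)=-\Big(\tfrac{t}{\pi}\Big)^{1/2}\int_0^1\tr\Big(e^{-tA_s^2}(A_+-A_-)\Big)\,ds ,
\]
as claimed. The main obstacle is a bookkeeping one: matching the parity requirement of Proposition~\ref{prop_final_DOI} and Theorem~\ref{DOI_converge_pointwise} (which want an \emph{odd} power $m'$) with the exponent $m=\lceil p/2\rceil$ coming from Hypothesis~\ref{hyp_final}, and making sure that the trace-class differences $(\bsH_{2,n}-z)^{-m'}-(\bsH_{1,n}-z)^{-m'}$ still converge in $\cL_1$ for that odd $m'$. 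This is handled by the now-familiar telescoping-plus-resolvent-identity expansion used in the proof of Theorem~\ref{left-hand side_conver_in_L1}: writing the $m'$-th power difference as a sum of terms each containing one factor of $\bsB'$ sandwiched between bounded operators of the form $\overline{(\bsH_{i,n}-z)^{-a}(\bsH_0-z)^{a}}$ and $(\bsH_0-z)^{b}(\bsH_{i,n}-z)^{-b}$ (uniformly bounded by Proposition~\ref{big_uniform_bounded} and strongly convergent by Proposition~\ref{big_so_convergence}), with the middle trace-class factor $(\bsH_0-z)^{-a}\bsB'(\bsH_0-z)^{-b}$ converging in $\cL_1$ after conjugation by $\bsP_n$ via Lemma~\ref{so_inLp}. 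Once this extra power is accommodated, no genuinely new estimate is needed and the proof is essentially the two convergence lemmas glued together.
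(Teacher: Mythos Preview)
Your proposal is correct and follows essentially the same route as the paper: start from Lemma~\ref{lem_principla trace formula_reduced}, use Proposition~\ref{prop_conv_integral_exp} for the right-hand side, and combine Theorem~\ref{left-hand side_conver_in_L1} with Theorem~\ref{DOI_converge_pointwise} (and Lemma~\ref{approx_H_jn}) for the left-hand side. The paper's proof is in fact terser than yours: it simply asserts that Theorem~\ref{left-hand side_conver_in_L1} places one in the hypotheses of Theorem~\ref{DOI_converge_pointwise} and concludes, without discussing the odd-exponent parity issue you flag. Your observation that $m=\lceil p/2\rceil$ need not be odd is well taken, and your fix---rerunning the telescoping/sandwiching argument of Theorem~\ref{left-hand side_conver_in_L1} for the next odd $m'\in\{m,m+1\}$---is correct and costs nothing, since Hypothesis~\ref{hyp_final}(iv) supplies $\bsB,\bsB'\in\bigcap_{j=1}^{2m-1}\dom(\delta_{\bsH_0}^j)$, which is more than enough to run Propositions~\ref{big_uniform_bounded} and~\ref{big_so_convergence} at exponent $m'\le m+1\le 2m$, while Hypothesis~\ref{hyp_final}(iii) together with the three-lines argument gives the needed $\cL_1$-inclusions $(\bsH_0-z)^{-a}\bsB'(\bsH_0-z)^{-b}$ for $a+b=m'+1\ge m+1$.
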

\begin{proof}
By Lemma \ref{lem_principla trace formula_reduced} we have 
\begin{equation}\label{principla trace formula_semigroup_reduced_2}
\tr\Big(e^{-t\bsH_{2,n}}-e^{-t\bsH_{1,n}}\Big)=-\Big(\frac{t}{\pi}\Big)^{1/2}\int_0^1\tr\Big(e^{-tA_{s,n}^2}(A_{+,n}-A_-)\Big)ds.
\end{equation}
We now pass to the limit as $n\to\infty$. 

For the left hand side of \eqref{principla trace formula_semigroup_reduced_2} we firstly note that Theorem \ref{left-hand side_conver_in_L1} guarantees that the assumptions of Theorem \ref{DOI_converge_pointwise} are satisfied with $A_n=\bsH_{2,n}, \, A=\bsH_2,\, B_n=\bsH_{1,n}, \, B=\bsH_1$. Hence,
Theorem \ref{DOI_converge_pointwise} implies that 
\begin{equation}\label{principla trace formula_semigroup_left-hand side}
\lim_{n\to\infty}\tr\Big(e^{-t\bsH_{2,n}}-e^{-t\bsH_{1,n}}\Big)=\tr\Big(e^{-t\bsH_{2}}-e^{-t\bsH_{1}}\Big).
\end{equation}
For the right hand side of \eqref{principla trace formula_semigroup_reduced_2} by Proposition \ref{prop_conv_integral_exp} we have 
\begin{equation*}
\lim_{n\to\infty}\int_0^1\tr\Big(e^{-tA_{s,n}^2}(A_{+,n}-A_-)\Big)ds=\int_0^1\tr\Big(e^{-tA_{s}^2}(A_{+}-A_-)\Big)ds.
\end{equation*}
Thus, \eqref{principla trace formula_semigroup_reduced_2} and \eqref{principla trace formula_semigroup_left-hand side} imply that 
\begin{equation*}
\tr\Big(e^{-t\bsH_{2}}-e^{-t\bsH_{1}}\Big)=-\Big(\frac{t}{\pi}\Big)^{1/2}\int_0^1\tr\Big(e^{-tA_{s}^2}(A_{+}-A_-)\Big)ds,
\end{equation*}
which concludes the proof. 
\end{proof}

\begin{remark}
Using a similar argument one can prove the following 
$$\tr\Big((\bsH_2-z)^{-k}-(\bsH_1-z)^{-k}\Big)=-\frac{(2k-1)!!}{2^k(k-1)!} \int_0^1\tr\big((A_s^2-z)^{-\frac12-k}(A_{+}-A_{-})\big)ds,$$
which gives an alternative proof of the resolvent version of the principal trace formula proved in \cite{CGK16}.
The resolvent version of the formula is known to be connected to cyclic homology (see references in \cite{CGK16}).

Our technique may be used to prove trace formulas for a wide class of functions thus going beyond resolvents and heat kernels. However, at this point it is not clear that there is a use for formulas for a  general class of functions, and so we omit this refinement.  \hfill $\diamond$
\end{remark}


\section{The Witten index and the spectral shift function}\label{ch_WI}

In this section  we move on to the implications for spectral shift functions of the principal trace formula, Theorem \ref{thm_WI=ssf}, which provides a relation between the Witten index of the operator $\bsD_\bsA^{}$  and the spectral shift function $\xi(\cdot;A_+,A_-)$. We follow the detailed treatment in \cite{CGPST_Witten}. 
What is new here compared with earlier work is that the results established in  this paper  
 enable us to remove the `relatively trace class perturbation assumption' in \cite{CGPST_Witten}  as well 
as the  `relatively Hilbert-Schmidt class perturbation assumption' in \cite{CGLS16}. This extension means that the old difficulty, that the only examples for
which the Witten index was defined were low dimensional,  is removed.  
In Section \ref{ch_examples} we show that our Hypothesis \ref{hyp_initial} permits consideration of differential operators (in particular, Dirac operators) in any dimension 
uniformly.

  We refer the reader to \cite[Chapter 8]{Yafaev_general} for the necessary background on the spectral shift function and its properties.
We start with introduction of the spectral shift functions $\xi(\cdot; A_+,A_-)$ and $\xi(\cdot; \bsH_2,\bsH_1)$. 

By Theorem \ref{left-hand side_conver_in_L1} we have 
$(\bsH_2 - z)^{-m} - (\bsH_1 - z)^{-m} \in 
\cL_1\big(L^2(\bbR; \cH)\big),$
for all $z\in\bbC\setminus \bbR$. Since the operators $\bsH_2, \bsH_1$ are non-negative, the function $\lambda\to (\lambda-z)^{-m}$ is monotone on the spectra of $\bsH_j$, $j=1,2$, for any $z<0$. Hence, it follows from  \cite[Section 8.9]{Yafaev_general} that there is a  spectral shift function 
$\xi(\, \cdot \, ; \bsH_2, \bsH_1)$ for the pair $(\bsH_2, \bsH_1)$ that satisfies
\begin{equation}
\xi(\, \cdot \, ; \bsH_2, \bsH_1) \in L^1\big(\bbR; (|\lambda|^{m + 1} + 1)^{-1} d\lambda\big).
\end{equation} 
Since $\bsH_j\geq 0$, $j=1,2$,  $\xi(\,\cdot\,; \bsH_2,\bsH_1)$ may be specified uniquely
by requiring that
\begin{equation}\label{eq_fixed_ssf_big}
\xi(\lambda; \bsH_2,\bsH_1) = 0, \quad \lambda < 0.  
\end{equation}
In addition, the following trace formula  holds:
\begin{equation}\label{trace_formula_left-hand side}
\tr\big(f(\bsH_2) - f(\bsH)\big)
= \int_{[0, \infty)} f'(\lambda)\xi(\lambda; \bsH_2, \bsH_1)\, d\lambda, \quad 
f \in S(\bbR).
\end{equation} 


We introduce now the spectral shift function for the pair $(A_+,A_-)$ using  \cite{Ya05}.
We firstly introduce the notation 
$p_0=2\lfloor \frac{p}2\rfloor+1,$
so that $p_0$ is always odd.
By Theorem \ref{thm_trace_class_approx} we have that $(A_+-z)^{-p_0}-(A_--z)^{-p_0}\in\cL_1(\cH)$, and therefore, by \cite[Theorem 2.2]{Ya05} there exists a function 
\begin{equation}\label{summability_xi_right-hand side}
 \xi(\cdot; A_+,A_-)
  \in L^1 (\mathbb R; (1 + \left| \lambda \right|)^{-p_0-1}\,  d
  \lambda) 
\end{equation} such that 
  \begin{equation}
    \label{trace_formula_right-hand side}
    \mathrm {tr}\, \left( f(A_+) - f(A_-)
    \right) = \int_{\mathbb R} f'(\lambda) \cdot \xi(\lambda; A_+ ,A_-)\, 
    d\lambda,\quad f\in S(\bbR).
  \end{equation}
%
%

However the spectral shift function $\xi(\cdot; A_+, A_-)$ introduced above is not unique,  in general, and therefore we have to fix one particular spectral shift function, which satisfies \eqref{summability_xi_right-hand side} and \eqref{trace_formula_right-hand side}. We do this using \cite{CGLNPS16}.

\begin{theorem}\label{fixing_ssf_A_-} Assume Hypothesis \ref{hyp_final}. There exists a unique spectral shift function $\xi(\cdot; A_+,A_-)$ such that
\begin{equation}\label{fixing_ssf_implic}
\xi(\cdot; A_+,A_-)=\lim_{n\to\infty}\xi(\cdot; A_{+,n},A_-)
\end{equation} 
in  $L^1(\bbR;(|\nu|^{p_0+1}+1)^{-1}d\nu)$.
\end{theorem}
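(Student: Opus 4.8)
The plan is to establish \eqref{fixing_ssf_implic} by combining the approximation results already proven in this paper with the abstract convergence theorem for spectral shift functions from \cite{CGLNPS16}. First I would recall the relevant result of \cite{CGLNPS16}: if $A_{+,n} \to A_+$ in the strong resolvent sense and, for some odd $m_0 \in \bbN$ (here $m_0 = p_0$) and every $a \in \bbR \setminus \{0\}$, one has $(A_{+,n} - ia)^{-p_0} - (A_- - ia)^{-p_0} \in \cL_1(\cH)$, $(A_+ - ia)^{-p_0} - (A_- - ia)^{-p_0} \in \cL_1(\cH)$, together with the trace-norm convergence $\|[(A_{+,n} - ia)^{-p_0} - (A_- - ia)^{-p_0}] - [(A_+ - ia)^{-p_0} - (A_- - ia)^{-p_0}]\|_1 \to 0$, then the spectral shift functions $\xi(\cdot; A_{+,n}, A_-)$, normalized appropriately, converge in $L^1(\bbR; (|\nu|^{p_0+1}+1)^{-1} d\nu)$ to a limit which is a spectral shift function for the pair $(A_+, A_-)$. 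This limit then \emph{defines} the distinguished $\xi(\cdot; A_+, A_-)$, giving both existence and, by construction, the stated convergence; uniqueness is immediate since the limit of a convergent sequence in $L^1$ is unique.

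The verification of the hypotheses is where the work of the earlier sections is used. The strong resolvent convergence $A_{+,n} = A_- + P_n B_+ P_n \to A_- + B_+ = A_+$ is exactly Lemma \ref{approx_A_n}(i) applied to the $p$-relative trace-class perturbation $B_+$ (which is $p$-relative trace class by \eqref{B_+_p-relative}). The trace-class membership of $(A_{+,n} - ia)^{-p_0} - (A_- - ia)^{-p_0}$ and $(A_+ - ia)^{-p_0} - (A_- - ia)^{-p_0}$, along with the required $\cL_1$-convergence, is precisely Theorem \ref{thm_trace_class_approx} with $j = p_0 > p$ (the "$j > p$" clause, which gives the conclusion in $\cL_1(\cH)$), applied to $B = B_+$ and $B_n = P_n B_+ P_n$, and noting as in Remark \ref{rem_norm_dif_res_independent} that one may use any resolvent parameter $z = ia \in \bbC \setminus \bbR$. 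So all three hypotheses of the \cite{CGLNPS16} convergence theorem hold.

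A minor point that needs attention: the spectral shift functions $\xi(\cdot; A_{+,n}, A_-)$ appearing in \eqref{fixing_ssf_implic} must be the specific representatives to which the \cite{CGLNPS16} theorem applies — since $B_{+,n} = P_n B_+ P_n$ is trace class, $A_{+,n}$ is a trace-class perturbation of $A_-$, so $\xi(\cdot; A_{+,n}, A_-)$ is the classical Krein spectral shift function, which is canonically normalized (it lies in $L^1(\bbR; d\lambda)$ and is uniquely determined by the Krein trace formula). I would remark that this is the choice being made, so that the statement is unambiguous. The resulting limit function automatically satisfies \eqref{summability_xi_right-hand side} and the trace formula \eqref{trace_formula_right-hand side}, because these properties pass to the $L^1(\bbR; (|\nu|^{p_0+1}+1)^{-1}d\nu)$-limit (the trace formula for fixed $f \in S(\bbR)$ passes to the limit using the $\cL_1$-convergence of $f(A_{+,n}) - f(A_-)$ guaranteed by Theorem \ref{DOI_converge_pointwise}, whose hypotheses were just checked).

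I do not anticipate a serious obstacle here: the theorem is essentially a bookkeeping corollary of Theorems \ref{thm_trace_class_approx} and \ref{DOI_converge_pointwise} together with the cited \cite{CGLNPS16} result. The only thing requiring care is making sure the normalization conventions line up — that the $\xi(\cdot; A_{+,n}, A_-)$ used are the canonical Krein functions and that the limit is being identified as \emph{the} spectral shift function for $(A_+, A_-)$ fixed by this limiting procedure (rather than one of the other representatives differing by an additive constant, which is exactly the ambiguity the theorem is designed to resolve).
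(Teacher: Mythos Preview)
Your proposal is correct and follows the same overall strategy as the paper: verify the approximation hypotheses via Lemma~\ref{approx_A_n} and Theorem~\ref{thm_trace_class_approx}, then invoke a convergence result for spectral shift functions from \cite{CGLNPS16}, using that each $B_{+,n}$ is trace class so that $\xi(\cdot;A_{+,n},A_-)$ is the canonical Krein function.

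The one substantive difference is in how \cite{CGLNPS16} is invoked. You cite a discrete-sequence convergence theorem directly. The paper instead embeds the sequence into a continuous one-parameter family
\[
A_+(s)=A_-+\hat P_s B_+\hat P_s,\qquad \hat P_s=\chi_{[-(1-s)^{-1},(1-s)^{-1}]}(A_-),\quad s\in[0,1),\ \hat P_1=I,
\]
so that $A_+(0)=A_{+,1}$ and $A_+(1)=A_+$, checks continuity of $s\mapsto A_+(s)$ in the pseudometric $d_{p_0,z}(A,A')=\|(A-i)^{-p_0}-(A'-i)^{-p_0}\|_1$ (again via the argument of Theorem~\ref{thm_trace_class_approx}), and then applies \cite[Theorem~4.7]{CGLNPS16}, which is formulated for continuous paths. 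The sequential statement \eqref{fixing_ssf_implic} is then recovered by evaluating along $s=(n-1)/n$. Your route is more direct, provided the sequential version you quote is indeed available in \cite{CGLNPS16}; the paper's continuous-path detour is presumably because that is the form in which the result is stated there. Either way the analytic content is the same, and your remarks about normalization and uniqueness match the paper's.
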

\begin{proof}

Introduce the path $\{A_+(s)\}_{s \in [0,1]}$ by setting
\begin{align*}
& A_+(s) = A_- + \hat P_s B_+\hat P_s, 
\quad \dom(A_+(s)) = \dom(A_-),   \quad s \in [0,1],    \\
& \hat P_s=\chi_{[-\frac1{1-s},\frac1{1-s}]}(A_-), s \in [0,1), \quad \hat P_1=I,
\end{align*}
in particular, 
\begin{equation}\label{A_+(0)}
A_+(0) = A_{+,1} (\text{ see } \eqref{Ant} ),\quad\, A_+(1) = A_+. 
\end{equation} 

Since $B_+=A_+-A_-$ is a $p$-relative trace-class perturbation of $A_-$ (see \eqref{B_+_p-relative}), inclusion \eqref{B_+,n_trace_class} implies  that $P_nB_+P_n\in\cL_1(\cH)$, and therefore $A_+(s)-A_-=\hat P_sB_+\hat P_s\in\cL_1(\cH)$ for any $s<1$. Hence  there exists 
a unique spectral shift function $\xi(\cdot; A_+(s),A_-)$ for the pair $(A_+(s),A_-)$, $s<1$, satisfying, in particular, 
$\xi(\cdot; A_+(s),A_-)\in L^1(\bbR).$

Moreover, in complete analogy to Theorem \ref{thm_trace_class_approx}, the family 
$A_+(s)$ depends continuously on $s \in [0,1]$ with respect to the family of pseudometrics $d_{p_0,z}(\cdot,\cdot)$
\begin{equation*}
d_{p_0,z}(A,A') = \big\|(A - i I)^{-p_0} - (A' - i I)^{-p_0}\big\|_{\cL_1(\cH)}  
\end{equation*}
for $A, A'$ in the set of self-adjoint operators which are $p_0$-resolvent comparable with 
respect to $A_-$ (equivalently, $A_+$), that is, $A, A'$ satisfy 
for all $\zeta \in i\bbR\setminus\{0\}$, 
\begin{equation*}
\big[(A - \zeta I)^{-p_0} - (A_- - \zeta I)^{-p_0}\big], 
\big[(A' - \zeta I)^{-p_0} - (A_- - \zeta I)^{-p_0}\big]  \in \cL_1(\cH).  
\end{equation*}
Thus, the hypotheses of \cite[Theorem 4.7]{CGLNPS16} are satisfied and hence we conclude that there exists a unique spectral shift function 
$\xi(\, \cdot \,; A_+(s), A_-)$ for the pair $(A_+(s), A_-)$ depending continuously on the parameter $s \in [0,1]$ in 
the space $L^1\big(\bbR; (|\nu|^{p_0+1} +1)^{-1} d\nu\big)$, satisfying  
$\xi(\, \cdot \,; A_+(0), A_-) = \xi(\, \cdot \,; A_{+,1}, A_-)$.

Taking $s=\frac{n-1}{n}$ we obtain  
$$\xi(\, \cdot \, ; A_+, A_-) 
\stackrel{\eqref{A_+(0)}}{=} \xi(\, \cdot \, ; A_+(1), A_-) 
= \lim_{s \uparrow 1}\xi(\, \cdot \, ; A_+(s), A_-)
= \lim_{n \to \infty} \xi(\, \cdot \, ; A_{+,n}, A_-).$$
 Therefore,  $\{\xi(\, \cdot \, ; A_{+,n}, A_-)\}_{n \in \bbN}$  converges pointwise a.e. to $\xi(\cdot; A_+, A_-)$ as $n \to \infty$. 
Since every $\xi(\, \cdot \, ; A_{+,n}, A_-)$, \, $n \in \bbN$, is uniquely defined, we  can fix uniquely the spectral shift function $\xi(\, \cdot \, ; A_+, A_-)$ satisfying conditions \eqref{fixing_ssf_implic}. \end{proof}
\begin{remark}
Since every $\xi(\, \cdot \, ; A_{+,n}, A_-),n \in \bbN,$ is uniquely defined, Theorem \ref{fixing_ssf_A_-} implies that we can fix uniquely the spectral shift function $\xi(\, \cdot \, ; A_+, A_-)$ satisfying equation \eqref{fixing_ssf_implic}. We adopt this method of fixing the spectral shift function 
for the remainder of this paper.
\hfill $\diamond$
\end{remark}
%
%

Now we move on to the proof of Pushnitski's formula, which establishes a relation between the spectral shift functions $(A_+,A_-)$ and $(\bsH_2,\bsH_1)$.

\begin{theorem}\label{thm_Push}
Assume Hypothesis \ref{hyp_final}. Let $\xi(\cdot;A_+,A_-)$ be the spectral shift function for the pair $(A_+,A_-)$ fixed in \eqref{fixing_ssf_implic} and let $\xi(\cdot; \bsH_2, \bsH_1)$ be the  spectral shift function for the pair $(\bsH_2,\bsH_1)$ fixed by equality \eqref{eq_fixed_ssf_big}. Then for a.e. $\lambda>0$ we have 
\begin{equation}\label{eq_Push}
\xi(\lambda; \bsH_2, \bsH_1)=\frac{1}{\pi}\int_{-\lambda^{1/2}}^{\lambda^{1/2}}
\frac{\xi(\nu; A_+,A_-)\, d\nu}{(\lambda-\nu^2)^{1/2}}
\end{equation}
with a convergent Lebesgue integral on the right-hand side of \eqref{eq_Push}. 
\end{theorem}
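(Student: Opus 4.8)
The plan is to deduce \eqref{eq_Push} from the principal trace formula (Theorem~\ref{thm_principla trace formula}) by expressing each of its two sides through a spectral shift function and then inverting a Laplace transform in the heat parameter~$t$.

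The first step is the auxiliary identity
\[
\int_0^1\tr\big(e^{-tA_s^2}(A_+-A_-)\big)\,ds=\int_{\bbR}e^{-t\nu^2}\,\xi(\nu;A_+,A_-)\,d\nu ,\qquad t>0 .
\]
To prove it I would pass through the reduced operators. Since $B_{+,n}=A_{+,n}-A_-$ is trace class, the Daletskii--Krein formula combined with the trace formula for the spectral shift function of a trace-class perturbation gives $\int_0^1\tr(\psi(A_{s,n})B_{+,n})\,ds=\int_{\bbR}\psi(\nu)\,\xi(\nu;A_{+,n},A_-)\,d\nu$ for every $\psi\in S(\bbR)$; taking $\psi(\nu)=e^{-t\nu^2}$ yields the identity with $A_{+,n}$ in place of $A_+$. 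Letting $n\to\infty$, the left-hand side converges by Proposition~\ref{prop_conv_integral_exp}, while the right-hand side converges because $\xi(\cdot;A_{+,n},A_-)\to\xi(\cdot;A_+,A_-)$ in $L^1(\bbR;(|\nu|^{p_0+1}+1)^{-1}\,d\nu)$ by Theorem~\ref{fixing_ssf_A_-} and $\nu\mapsto e^{-t\nu^2}(|\nu|^{p_0+1}+1)$ is bounded. For the left-hand side of the principal trace formula I would apply \eqref{trace_formula_left-hand side} to a function $f\in S(\bbR)$ with $f(\lambda)=e^{-t\lambda}$ for $\lambda\geq0$ (available since $\bsH_1,\bsH_2\geq0$), obtaining
\[
\tr\big(e^{-t\bsH_2}-e^{-t\bsH_1}\big)=-t\int_0^\infty e^{-t\lambda}\,\xi(\lambda;\bsH_2,\bsH_1)\,d\lambda ,\qquad t>0 .
\]

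Substituting both identities into Theorem~\ref{thm_principla trace formula} and dividing by $-t$ gives the Laplace-transform identity
\[
\int_0^\infty e^{-t\lambda}\,\xi(\lambda;\bsH_2,\bsH_1)\,d\lambda=\frac{1}{\sqrt{\pi t}}\int_{\bbR}e^{-t\nu^2}\,\xi(\nu;A_+,A_-)\,d\nu ,\qquad t>0 .
\]
Substituting $\mu=\nu^2$ on the right expresses the integral there as the Laplace transform of $\mu\mapsto\big(\xi(\sqrt\mu;A_+,A_-)+\xi(-\sqrt\mu;A_+,A_-)\big)/(2\sqrt\mu)$, and since $(\pi t)^{-1/2}=\pi^{-1}\int_0^\infty e^{-t\rho}\rho^{-1/2}\,d\rho$, the whole right-hand side is the Laplace transform of the convolution
\[
\lambda\mapsto\frac1\pi\int_0^\lambda\frac{1}{\sqrt{\lambda-\mu}}\cdot\frac{\xi(\sqrt\mu;A_+,A_-)+\xi(-\sqrt\mu;A_+,A_-)}{2\sqrt\mu}\,d\mu ,
\]
the interchange of integrations being justified by the summability \eqref{summability_xi_right-hand side} of $\xi(\cdot;A_+,A_-)$ (which makes $\nu\mapsto e^{-t\nu^2}\xi(\nu;A_+,A_-)$ absolutely integrable) together with $\int_0^\lambda[(\lambda-\mu)\mu]^{-1/2}\,d\mu=\pi$. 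By injectivity of the Laplace transform on $L^1_{\loc}([0,\infty))$, $\xi(\lambda;\bsH_2,\bsH_1)$ coincides with this convolution for a.e.\ $\lambda>0$; undoing $\mu=\nu^2$ turns it into $\pi^{-1}\int_{-\lambda^{1/2}}^{\lambda^{1/2}}\xi(\nu;A_+,A_-)(\lambda-\nu^2)^{-1/2}\,d\nu$, which is \eqref{eq_Push}, and the same computation shows this last integral converges absolutely for a.e.\ $\lambda>0$.

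The step I expect to demand the most care is the passage from the Laplace-transform identity to the pointwise formula: one must check that both sides are Laplace transforms of functions locally integrable on $[0,\infty)$ — this is exactly where the weighted summability of $\xi(\cdot;A_+,A_-)$ and of $\xi(\cdot;\bsH_2,\bsH_1)$, hence the machinery of Sections~\ref{ch_prelim}--\ref{ch_principla trace formula}, enters — and carefully justify the Fubini interchange producing the convolution. The first auxiliary identity, although conceptually standard, genuinely needs the approximation scheme rather than a direct argument, since $\erf(t^{1/2}A_+)-\erf(t^{1/2}A_-)$ need not be trace class; this is precisely why Proposition~\ref{prop_conv_integral_exp} and the delicate normalization of $\xi(\cdot;A_+,A_-)$ in Theorem~\ref{fixing_ssf_A_-} are invoked.
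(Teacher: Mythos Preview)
Your proposal is correct and follows essentially the same route as the paper: both sides of the principal trace formula are rewritten via spectral shift functions (the right-hand side by passing through the reduced operators $A_{+,n}$, invoking Daletskii--Krein/Krein's trace formula, and taking $n\to\infty$ via Proposition~\ref{prop_conv_integral_exp} and Theorem~\ref{fixing_ssf_A_-}; the left-hand side via \eqref{trace_formula_left-hand side}), and the resulting Laplace-transform identity is inverted by recognizing the convolution with $(\pi s)^{-1/2}$. The only cosmetic difference is that the paper routes the reduced computation through the error function $\erf$ explicitly, whereas you state the Daletskii--Krein/Krein step directly for $\psi(\nu)=e^{-t\nu^2}$; the content is identical.
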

\begin{proof}
By the principle trace formula for the semigroup difference (see Theorem \ref{thm_principla trace formula}) we have that 
$$\tr(e^{-t\bsH_2}-e^{-t\bsH_1})=-\Big(\frac{t}{\pi}\Big)^{1/2}\int_0^1\tr\big(e^{-tA_s^2}(A_+-A_-)\big)ds$$
for all $t>0$. 
On the right hand side of this formula using \eqref{conv_right-hand side_principla trace formula} and \eqref{FTC_erf} we have that 
\begin{align}
\begin{split}
\Big(\frac{t}{\pi}\Big)^{1/2}\int_0^1&\tr(e^{-tA_s^2}(A_+-A_-))ds\\
&\stackrel{\eqref{conv_right-hand side_principla trace formula}}{=}\lim_{n\to\infty}\Big(\frac{t}{\pi}\Big)^{1/2}\int_0^1\tr(e^{-tA_{s,n}^2}(A_{+,n}-A_-))ds
\\
&\stackrel{\eqref{FTC_erf}}{=}\lim_{n\to\infty}\frac12\tr(\erf(t^{1/2}A_{+,n})-\erf(t^{1/2}A_-))
\end{split}
\label{principla trace formula_right-hand side_to_ssf}
\end{align}
By Krein's trace formula \cite[Theorem 8.3.3]{Yafaev_general} and the definition of the error function \eqref{erf_def} it follows that 
\begin{equation}\label{Krein_small_erf}
\frac12\tr\big(\erf(t^{1/2}A_{+,n})-\erf(t^{1/2}A_-)\big)=\Big(\frac{t}{\pi}\Big)^{1/2}\int_\bbR e^{-ts^2}\xi(s;A_{+,n},A_-)ds.
\end{equation}
Furthermore, referring to Theorem \ref{fixing_ssf_A_-}
we obtain 
\begin{equation}\label{approx_ssf_erf}
\lim_{n\to\infty}\Big(\frac{t}{\pi}\Big)^{1/2}\int_\bbR e^{-ts^2}\xi(s; A_{+,n},A_-)ds=\Big(\frac{t}{\pi}\Big)^{1/2}\int_\bbR e^{-ts^2}\xi(s;A_{+},A_-)ds.
\end{equation}

Thus, combining \eqref{principla trace formula_right-hand side_to_ssf}, \eqref{Krein_small_erf} and \eqref{approx_ssf_erf} we conclude that the right-hand side of the principal trace formula can be written as 
\begin{equation*}
\Big(\frac{t}{\pi}\Big)^{1/2}\int_0^1\tr(e^{-tA_s^2}(A_+-A_-))ds=\Big(\frac{t}{\pi}\Big)^{1/2}\int_\bbR e^{-ts^2}\xi(s; A_{+},A_-)ds.
\end{equation*}

Since the functions $s\mapsto e^{-ts}, s\in\bbR, t>0$, is a Schwartz function,
it follows from  Krein's trace formula \eqref{trace_formula_left-hand side} that
$\tr(e^{-t\bsH_2}-e^{-t\bsH_1})=-t\int_0^\infty \xi(\lambda;\bsH_2,\bsH_1) e^{-t\lambda}\, d\lambda.$

Thus,
\begin{align}\label{both_ssf}
\int_0^\infty &\xi(\lambda;\bsH_2,\bsH_1) e^{-t\lambda}\, d\lambda=\Big(\frac1{\pi\cdot t}\Big)^{1/2}\int_\bbR \xi(s;A_+,A_-)e^{-ts^2}ds\nonumber\\
&=\Big(\frac1{\pi\cdot t}\Big)^{1/2}\int_0^\infty \frac{\xi(\sqrt{s};A_+,A_-)+\xi(-\sqrt{s};A_+,A_-)}{\sqrt{s}}e^{-ts}ds,
\end{align}
where for the last integral we used the substitutions $s\mapsto \sqrt{s}$ and $s\mapsto -\sqrt{s}$ for the integrals on $(0,\infty)$ and on $(-\infty, 0)$, respectively.

Let us denote by $L$ the Laplace transform on $L_{loc}^1(\bbR)$. It is well-known that $L(\frac1{\pi \sqrt{s}})(t)=\frac1{\sqrt{\pi \, t}}$ (see e.g. \cite[29.3.4]{AbSt_table}). Therefore, introducing 
$$\xi_0(s):=\frac{\xi(\sqrt{s};A_+,A_-)+\xi(-\sqrt{s};A_+,A_-)}{\sqrt{s}},\, s\in[0,\infty),$$ equality \eqref{both_ssf} can be rewritten as 
$L\big( \xi(\lambda;\bsH_2,\bsH_1) \big)(t)=L\big(\frac1{\pi \sqrt{s}}\big)(t) \cdot L\big(\xi_0(s)\big)(t).$
By \cite[Proposition 1.6.4]{ABHNbook_Laplace} the right-hand side of the previous equality is equal to 
$L\Big(\frac1{\pi \sqrt{s}}\ast \xi_0(s)\Big)(t).$
Therefore, by the uniqueness theorem for the Laplace transform (see e.g. \cite[Theorem 1.7.3]{ABHNbook_Laplace}) we have 
$\xi(\lambda;\bsH_2,\bsH_1)=\big(\frac1{\pi \sqrt{s}}\ast \xi_0(s)\big)(\lambda)$
for a.e. $\lambda\in[0,\infty)$. 
Thus, for a.e. $\lambda\in[0,\infty)$ we have 
\begin{align*}
\xi(\lambda;\bsH_2,\bsH_1)&=\frac1\pi \int_0^\lambda \frac{1}{\sqrt{\lambda-s}}\xi_0(s)ds\\
&=\frac1\pi \int_0^\lambda \frac{1}{\sqrt{\lambda-s}}\frac{\xi(\sqrt{s};A_+,A_-)+\xi(-\sqrt{s};A_+,A_-)}{\sqrt{s}}ds\\
&=\frac1\pi \int_0^\lambda \frac{\xi(\sqrt{s};A_+,A_-)}{\sqrt{s}\sqrt{\lambda-s}}ds+\frac1\pi \int_0^\lambda \frac{\xi(-\sqrt{s};A_+,A_-)}{\sqrt{s}\sqrt{\lambda-s}}ds\\
&=\frac1\pi \int_{-\sqrt{\lambda}}^{\sqrt{\lambda}} \frac{\xi(s; A_+,A_-)ds}{\sqrt{\lambda-s^2}},
\end{align*}
as required.
\end{proof}


Having established Pushnitski's formula, we can now prove Theorem \ref{thm_WI=ssf}, which provides a relation between the Witten index of the operator $\bsD_\bsA^{}$  and the spectral shift function $\xi(\cdot;A_+,A_-)$.
We start with definition of the Witten index. 

\begin{definition} \lb{d8.1} 
Let $T$ be a closed, linear, densely defined operator acting in $\cH$ and  
suppose that for some $t_0 > 0$
$e^{- t_0 T^* T} - e^{- t_0 TT^*}\in \cL_1(\cH).  $  
Then $\big(e^{-t T^*T} - e^{-t TT^*}\big) \in \cL_1(\cH)$ for all $t >t_0$, and one introduces the  (semigroup) regularized Witten index $W_s (T)$ of $T$ 
\begin{equation*}
W_s(T) = \lim_{t \uparrow \infty} \tr_{\cH}\big(e^{-t T^*T} - e^{-t TT^*}\big),    
\end{equation*}
whenever the limit exists.
\end{definition} 

Recall that for a closed densely defined operator the Witten index has also a different regularisation by resolvent differences (see \cite{BGGSS87}, \cite{CGPST_Witten}). Namely, suppose that for some $($and hence for all\,$)$
$z \in \bbC \backslash [0,\infty)$, we have that 
\begin{equation*}
\big[(T^* T - z)^{-1} - (TT^* - z)^{-1}\big] \in \cL_1(\cH).  
\end{equation*}
Then the resolvent regularized Witten index $W_r (T)$ of $T$ is defined by
\begin{equation*}
W_r(T) = \lim_{\lambda \uparrow 0} (- \lambda) \tr\big((T^* T - \lambda )^{-1}
- (T T^* - \lambda )^{-1}\big)
\end{equation*}
whenever this limit exists.

However, in our setting we aim to consider the case when $T$ is a differential type operator. In this case, it is  typical that the difference of resolvents $(T^* T - z)^{-1} - (TT^* - z)^{-1}$ belongs to a higher  Schatten class as would be expected for the study of differential operators in higher dimensions. Therefore, we need to modify the definition of resolvent regularisation of the Witten index.

\begin{definition}Let $T$ be a closed, linear, densely defined operator acting in $\cH$ and let $k\in \bbN$. Suppose that for all $\lambda<0$ we have that 
\begin{equation*}
\big[(T^* T - \lambda)^{-k} - (TT^* - \lambda)^{-k}\big] \in \cL_1(\cH).  
\end{equation*}
Then the $k$-th resolvent regularized Witten index $W_{k,r} (T)$ of $T$ is defined by
\begin{equation*}
W_{k,r}(T) = \lim_{\lambda \uparrow 0} (- \lambda)^k \tr\big((T^* T - \lambda )^{-k}
- (T T^* - \lambda )^{-k}\big)
\end{equation*}
whenever this limit exists.
\end{definition}

\begin{remark}We note that the $k$-th resolvent regularised Witten index is the limit (as $\lambda\uparrow 0$) of the so-called homological index (see \cite{CGK15, CGK16}). \hfill $\diamond$
\end{remark}
To relate the Witten index of the operator $\bsD_\bsA^{}$ to the spectral shift function for the pair $(A_+,A_-)$, we first recall some necessary definitions.

\begin{definition} \label{d5.1}
Let $f \in L^1_{loc} (\bbR)$ and $h > 0$. 
\begin{enumerate}
\item The point  $x \in \bbR$ is called a right Lebesgue point of $f$ if
there exists an $\alpha_+ \in \bbC$ such that
\begin{equation*}
\lim_{h \downarrow 0} \f{1}{h} \int_{x}^{x + h}|f(y) - \alpha_+|  dy= 0.   
\end{equation*}
One then denotes $\alpha_+ = \Lf(x_+)$. 

\item The point $x \in \bbR$ is called a left Lebesgue point of $f$ if
there exists an $\alpha_- \in \bbC$ such that
\begin{equation*}
\lim_{h \downarrow 0} \f{1}{h} \int_{x - h}^{x}  |f(y) - \alpha_-| dy= 0.  
\end{equation*}
One then denotes $\alpha_- = \Lf(x_-)$. 

\end{enumerate}
\end{definition}

%

To establish our results for resolvent regularisations of the Witten index we also need the following lemma, which establishes a more general version of \cite[Lemma ~4.1~(ii)]{CGPST_Witten}. Its proof is a verbatim repetition of the argument of  \cite[Lemma 4.1 (ii)]{CGPST_Witten} and is therefore omitted.  

\begin{lemma} \label{lem_limit_is_Lvalue} Let $k\in\bbN$. 
Introduce the linear operator 
$$T_k: 
L^1\big((0,\infty);(\nu+1)^{-k-1}d\nu\big) \to L^1_{loc}((0,\infty); d\lambda)$$
by setting 
$$(T_kf)(\lambda)=-k\lambda^k \int_0^\infty (\nu+\lambda)^{-k-1} f(\nu) d \nu,\quad\lambda > 0.$$
 If $0$ is a Lebesgue point for $f\in L^1\big((0,\infty);(\nu+1)^{-k-1}d\nu\big)$, then
\begin{equation*} 
\lim_{\lambda\downarrow0}(T_k f)(\lambda)= \Lf (0_+).
\end{equation*} 
\end{lemma}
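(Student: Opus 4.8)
The plan is to notice that, after rescaling, $T_k$ is one-sided convolution against a dilation-invariant approximate identity concentrated at the origin, and then to run the standard Lebesgue-point argument. Put $\psi(u)=k(1+u)^{-k-1}$ for $u>0$ and $\psi_\lambda(\nu)=\lambda^{-1}\psi(\nu/\lambda)=k\lambda^k(\nu+\lambda)^{-k-1}$, so that $(T_kf)(\lambda)=-\int_0^\infty\psi_\lambda(\nu)f(\nu)\,d\nu$; the substitution $u=\nu/\lambda$ gives $\int_0^\infty\psi_\lambda(\nu)\,d\nu=\int_0^\infty\psi(u)\,du=1$ for every $\lambda>0$, while $\psi$ is nonnegative and decreasing. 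First I would record that $(T_kf)(\lambda)$ is finite for each $\lambda>0$ and that $T_kf\in L^1_{\loc}((0,\infty))$: splitting $\int_0^\infty$ at $\nu=1$, on $\{\nu\ge1\}$ one has $(\nu+\lambda)^{-k-1}\le\nu^{-k-1}\le2^{k+1}(\nu+1)^{-k-1}$, which is dominated by $\|f\|_{L^1((\nu+1)^{-k-1}d\nu)}$, whereas on $\{0<\nu<1\}$ the weight $(\nu+1)^{-k-1}$ is bounded below, so $f\in L^1(0,1)$; a Tonelli estimate then bounds $\int_a^b|(T_kf)(\lambda)|\,d\lambda$ for each $[a,b]\subset(0,\infty)$. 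Since $k\ge1$ the constant function lies in $L^1((\nu+1)^{-k-1}d\nu)$, so $f-\Lf(0_+)$ does too.

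Writing $\alpha_+=\Lf(0_+)$ and using $\int_0^\infty\psi_\lambda=1$, the statement reduces to showing
\begin{equation*}
\lambda^k\int_0^\infty(\nu+\lambda)^{-k-1}\,\big|f(\nu)-\alpha_+\big|\,d\nu\ \longrightarrow\ 0\qquad\text{as }\lambda\downarrow0.
\end{equation*}
Fix $\e>0$ and split the integral at a radius $R=R(\e)>0$. On the far piece $\{\nu>R\}$ the bound $(\nu+\lambda)^{-k-1}\le\nu^{-k-1}\le\big((R+1)/R\big)^{k+1}(\nu+1)^{-k-1}$ shows that $\int_R^\infty(\nu+\lambda)^{-k-1}|f(\nu)-\alpha_+|\,d\nu$ is bounded, uniformly in $\lambda>0$, by a constant depending only on $R$ and $\|f-\alpha_+\|_{L^1((\nu+1)^{-k-1}d\nu)}$; since $k\ge1$, multiplying by $\lambda^k$ makes this contribution vanish as $\lambda\downarrow0$, for each fixed $R$.

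The crux is the near piece $\{0<\nu\le R\}$, on which $(\nu+\lambda)^{-k-1}$ is unbounded as $\lambda\downarrow0$. Here I would feed in the right-Lebesgue-point hypothesis through the primitive $G(\nu):=\int_0^\nu|f(s)-\alpha_+|\,ds$, which satisfies $G(\nu)=o(\nu)$ as $\nu\downarrow0$; choose $R$ so small that $G(\nu)\le\e\nu$ for $0<\nu\le R$ (note $G(0)=0$). Integration by parts gives
\begin{equation*}
\int_0^R(\nu+\lambda)^{-k-1}\,dG(\nu)=(R+\lambda)^{-k-1}G(R)+(k+1)\int_0^R(\nu+\lambda)^{-k-2}G(\nu)\,d\nu,
\end{equation*}
whose boundary term is $\le R^{-k-1}G(R)\le\e R^{-k}$ and whose remaining integral is $\le(k+1)\e\int_0^R\nu(\nu+\lambda)^{-k-2}\,d\nu\le(k+1)\e\int_0^R(\nu+\lambda)^{-k-1}\,d\nu\le\tfrac{k+1}{k}\,\e\,\lambda^{-k}$, using $G(\nu)\le\e\nu$, then $\nu\le\nu+\lambda$, then $\int_0^\infty(\nu+\lambda)^{-k-1}d\nu=\tfrac1k\lambda^{-k}$. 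Hence $\lambda^k$ times the near piece is at most $\e R^{-k}\lambda^k+\tfrac{k+1}{k}\e$, so $\limsup_{\lambda\downarrow0}$ of the whole quantity is $\le\tfrac{k+1}{k}\e$; since $\e>0$ was arbitrary the displayed limit holds, which is the assertion of the lemma. The only genuinely delicate point is this last estimate — the power $\lambda^k$ is exactly strong enough to absorb the singularity of $(\nu+\lambda)^{-k-1}$ at the origin once the Lebesgue condition is encoded in $G$, the key elementary inequality being $\nu\,(\nu+\lambda)^{-k-2}\le(\nu+\lambda)^{-k-1}$.
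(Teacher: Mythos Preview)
Your proof is correct and complete: the approximate-identity reading of $T_k$ via $\psi_\lambda(\nu)=k\lambda^k(\nu+\lambda)^{-k-1}$, the far/near split with the tail controlled by the weighted $L^1$ norm and the near piece handled by integration by parts against the primitive $G(\nu)=\int_0^\nu|f-\alpha_+|$, is precisely the standard Lebesgue-point argument. The paper itself gives no proof, stating only that it is a verbatim repetition of \cite[Lemma~4.1(ii)]{CGPST_Witten} (which treats $k=1$), so there is nothing to compare against directly; your write-up is the natural extension to general $k$ and is almost certainly what that reference does.

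One small caveat worth flagging: with the minus sign in the paper's definition of $T_k$ one has $(T_k 1)(\lambda)\equiv -1$, so your computation in fact yields $(T_kf)(\lambda)\to -\Lf(0_+)$ rather than $+\Lf(0_+)$; you glide over this at ``the statement reduces to showing\dots''. This is a sign typo in the stated lemma (compensated by a matching sign slip where the lemma is applied in the proof of Theorem~\ref{thm_WI=ssf}), not a flaw in your reasoning.
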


We are now in a position to state our main result in this present Section.
Its proof closely follows the argument used in  \cite{CGPST_Witten}.

\begin{theorem} \label{thm_WI=ssf}
Assume Hypothesis \ref{hyp_final} and assume that $0$ is a right 
and a left Lebesgue point of $\xi(\cdot; A_+, A_-)$.
 Then $0$ is a right Lebesgue point of 
$\xi(\cdot ; \bsH_2, \bsH_1)$ 
\begin{equation*}
\Lxi(0_+; \bsH_2, \bsH_1) 
= [\Lxi(0_+; A_+,A_-) + \Lxi(0_-; A_+, A_-)]/2   
\end{equation*}
and the Witten indices $W_s(\bsD_\bsA^{})$ and $W_{k,r}(\bsD_\bsA^{})$, $k\geq m$, exist and equal 
\begin{align*} 
\begin{split}
W_s(\bsD_\bsA^{}) &= W_{k,r}(\bsD_\bsA^{})=\Lxi(0_+; \bsH_2, \bsH_1) \\
&= [\Lxi(0_+; A_+,A_-) + \Lxi(0_-; A_+, A_-)]/2
\end{split}
\end{align*}
\end{theorem}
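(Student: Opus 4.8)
The plan is to deduce Theorem~\ref{thm_WI=ssf} from the principal trace formula (Theorem~\ref{thm_principla trace formula}), Pushnitski's formula (Theorem~\ref{thm_Push}), and the Lebesgue-point analysis packaged in Lemma~\ref{lem_limit_is_Lvalue}, following the template of \cite{CGPST_Witten}. First I would record the elementary ``averaging'' fact for the spectral shift functions: if $0$ is both a right and a left Lebesgue point of $\xi(\cdot;A_+,A_-)$ with values $\xi_L(0_+;A_+,A_-)$ and $\xi_L(0_-;A_+,A_-)$, then the even-reflected, $\sqrt{\cdot}$-substituted function
$$
\xi_0(s)=\frac{\xi(\sqrt s;A_+,A_-)+\xi(-\sqrt s;A_+,A_-)}{\sqrt s},\qquad s>0,
$$
which already appeared in the proof of Theorem~\ref{thm_Push}, has $0$ as a right Lebesgue point with value $\xi_L(0_+;A_+,A_-)+\xi_L(0_-;A_+,A_-)$ (the $1/\sqrt s$ weight is handled exactly as in \cite[proof of Thm.~4.3]{CGPST_Witten}, using that $\sqrt s\to 0$). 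Then, from the convolution identity $\xi(\lambda;\bsH_2,\bsH_1)=\big(\tfrac1{\pi\sqrt s}\ast\xi_0\big)(\lambda)$ established in the proof of Theorem~\ref{thm_Push}, a standard approximate-identity argument (the kernel $\tfrac1{\pi\sqrt s}$ on $(0,\lambda)$, rescaled, is an approximate identity at $0$) shows $0$ is a right Lebesgue point of $\xi(\cdot;\bsH_2,\bsH_1)$ with
$$
\xi_L(0_+;\bsH_2,\bsH_1)=\tfrac12\big(\xi_L(0_+;A_+,A_-)+\xi_L(0_-;A_+,A_-)\big),
$$
the factor $\tfrac12$ coming from $\int_0^\infty \tfrac1{\pi\sqrt s}e^{-ts}\,ds=\tfrac1{\sqrt{\pi t}}$ versus the $\tfrac1\pi$ normalization in \eqref{eq_Push}.

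Next I would identify the two regularized Witten indices with $\xi_L(0_+;\bsH_2,\bsH_1)$. For the resolvent regularization $W_{k,r}$, $k\ge m$: by Theorem~\ref{left-hand side_conver_in_L1} (together with the elementary identity $C_1^k-C_2^k=\sum C_1^{k-j}(C_1-C_2)C_2^{j-1}$ applied to the resolvents, to pass from exponent $m$ to any $k\ge m$) the difference $(\bsH_1-\lambda)^{-k}-(\bsH_2-\lambda)^{-k}$ is trace class for $\lambda<0$, so $W_{k,r}(\bsD_\bsA)$ is well-defined as a limit; and by the trace formula \eqref{trace_formula_left-hand side} for the pair $(\bsH_2,\bsH_1)$ applied to $f(\cdot)=(\cdot-\lambda)^{-k}$ one gets
$$
(-\lambda)^k\tr\big((\bsH_1-\lambda)^{-k}-(\bsH_2-\lambda)^{-k}\big)
=-k(-\lambda)^k\int_0^\infty (\mu-\lambda)^{-k-1}\xi(\mu;\bsH_2,\bsH_1)\,d\mu=(T_k\xi(\cdot;\bsH_2,\bsH_1))(-\lambda),
$$
with $T_k$ as in Lemma~\ref{lem_limit_is_Lvalue}; since $\xi(\cdot;\bsH_2,\bsH_1)\in L^1(\bbR;(|\mu|^{m+1}+1)^{-1}d\mu)\subset L^1((0,\infty);(\mu+1)^{-k-1}d\mu)$ and $0$ is a right Lebesgue point of it, Lemma~\ref{lem_limit_is_Lvalue} gives $W_{k,r}(\bsD_\bsA)=\xi_L(0_+;\bsH_2,\bsH_1)$. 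For the semigroup regularization $W_s$: the principal trace formula gives $\tr(e^{-t\bsH_2}-e^{-t\bsH_1})$, and combining \eqref{trace_formula_left-hand side} with $f(\cdot)=e^{-t\cdot}$ yields $\tr(e^{-t\bsH_2}-e^{-t\bsH_1})=-t\int_0^\infty e^{-t\lambda}\xi(\lambda;\bsH_2,\bsH_1)\,d\lambda$, an Abelian/Laplace average of $\xi(\cdot;\bsH_2,\bsH_1)$ at $0$; since $0$ is a right Lebesgue point, the classical Abelian theorem for the Laplace transform (or the already-available argument of \cite[Lemma~4.1]{CGPST_Witten}) gives $\lim_{t\to\infty}\tr(e^{-t\bsH_2}-e^{-t\bsH_1})=\xi_L(0_+;\bsH_2,\bsH_1)$, i.e. $W_s(\bsD_\bsA)=\xi_L(0_+;\bsH_2,\bsH_1)$. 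Assembling these three equalities with the averaging identity from the first paragraph yields the displayed chain in the theorem.

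The main obstacle I expect is the careful bookkeeping around the \emph{non-uniqueness} of $\xi(\cdot;A_+,A_-)$ and around which Lebesgue-point statement is genuinely being used. The right-hand side of the theorem refers to $\xi_L(0_\pm;A_+,A_-)$ for the specific spectral shift function fixed in Theorem~\ref{fixing_ssf_A_-}/\eqref{fixing_ssf_implic}, and one must make sure that the $\xi(\cdot;A_+,A_-)$ appearing in Pushnitski's formula \eqref{eq_Push} is exactly that one — which is fine, since Theorem~\ref{thm_Push} is stated for the fixed $\xi$. A second, more technical point is justifying the approximate-identity limit for the convolution with $\tfrac1{\pi\sqrt s}$ at a one-sided Lebesgue point rather than a full Lebesgue point; here one has to keep the mass of the kernel on $(0,\lambda)$ and its concentration near $0$ under control, and argue that the contribution of $\xi_0$ away from $0$ is negligible using only $\xi_0\in L^1_{loc}$ plus the $(\mu+1)^{-k-1}$-type tail control inherited from \eqref{summability_xi_right-hand side}. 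Once those two points are handled cleanly, the rest is a direct transcription of the $\cite{CGPST_Witten}$ argument with $m$-th powers of resolvents in place of first powers, using Theorem~\ref{left-hand side_conver_in_L1} wherever trace-class membership of resolvent differences is invoked.
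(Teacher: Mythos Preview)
Your handling of the two Witten indices $W_s$ and $W_{k,r}$ is essentially identical to the paper's: Krein's trace formula \eqref{trace_formula_left-hand side} combined with an Abelian theorem for $W_s$, and Lemma~\ref{lem_limit_is_Lvalue} for $W_{k,r}$. That part is fine (up to a harmless sign slip: $\tr(e^{-t\bsH_2}-e^{-t\bsH_1})\to -\xi_L(0_+;\bsH_2,\bsH_1)$, not $+\xi_L(0_+)$, which is exactly what is needed since $W_s(\bsD_\bsA)=\lim\tr(e^{-t\bsH_1}-e^{-t\bsH_2})$).

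There is a genuine gap in your first step, however. Your claim that
\[
\xi_0(s)=\frac{\xi(\sqrt s;A_+,A_-)+\xi(-\sqrt s;A_+,A_-)}{\sqrt s}
\]
has $0$ as a right Lebesgue point with value $\xi_L(0_+;A_+,A_-)+\xi_L(0_-;A_+,A_-)$ is false in general: if that sum is nonzero then $\xi_0(s)\sim(\xi_L(0_+)+\xi_L(0_-))/\sqrt s$ blows up at $0$ and cannot have any finite Lebesgue value. Your description of the convolution kernel is also off: $s\mapsto \tfrac{1}{\pi\sqrt{\lambda-s}}$ on $(0,\lambda)$ does not concentrate near $0$ (it concentrates near $\lambda$), and its total mass is $2\sqrt\lambda/\pi\to 0$, so it is not an approximate identity in the sense you describe. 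What actually makes the argument work is that the \emph{two} square-root singularities combine: after rescaling $s=\lambda u$ one gets $\xi(\lambda;\bsH_2,\bsH_1)=\tfrac1\pi\int_0^1 g(\lambda u)\,[u(1-u)]^{-1/2}du$ with $g(s)=\xi(\sqrt s)+\xi(-\sqrt s)$, a fixed probability measure in $u$ applied to $g(\lambda\,\cdot)$. One then needs a Lebesgue-point statement for $g$ (equivalently for $f(\nu)=\xi(\nu)+\xi(-\nu)$ in the $\nu$-variable), not for $\xi_0$, together with control of the unbounded weight $[u(1-u)]^{-1/2}$ --- and one must upgrade this to the \emph{Lebesgue point} property of $\xi(\cdot;\bsH_2,\bsH_1)$, not merely convergence of values.

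The paper sidesteps all of this by working directly with the representation
\[
\xi(\lambda;\bsH_2,\bsH_1)=\frac1\pi\int_0^{\lambda^{1/2}}\frac{f(\nu)\,d\nu}{(\lambda-\nu^2)^{1/2}},\qquad f(\nu)=\xi(\nu;A_+,A_-)+\xi(-\nu;A_+,A_-),
\]
(equation \eqref{55f}), in which $f$ genuinely has $0$ as a right Lebesgue point by hypothesis, and then invokes \cite[Lemma~4.1(i)]{CGPST_Witten}, which is precisely the statement that this Abel-type transform preserves right Lebesgue points at $0$ with the factor $1/2$. Once you replace your $\xi_0$-based paragraph by this, the rest of your outline goes through.
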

\begin{proof}
First, one rewrites \eqref{eq_Push} in the form,
\begin{equation} 
\xi(\lambda; \bsH_2, \bsH_1) = \frac{1}{\pi}\int_0^{\lambda^{1/2}}
\frac{d \nu \, [\xi(\nu; A_+,A_-) + \xi(-\nu; A_+,A_-)]}{(\lambda-\nu^2)^{1/2}},  
\quad \lambda > 0.     \label{55f}
\end{equation} 
Define the function $f(\nu) = [\xi(\nu;A_+,A_-)+\xi(-\nu;A_+,A_-)]$. By assumption, $0$ is a right 
and a left Lebesgue point of $\xi(\,\cdot\,\, ; A_+, A_-)$, and therefore, $0$ is a right Lebesgue point of $f$.  Equality \eqref{55f} together with \cite[Lemma 4.1 (i)]{CGPST_Witten}
implies that 
%
 $0$ is a right Lebesgue point of 
$\xi(\,\cdot\,\, ; \bsH_2, \bsH_1)$ and 
$$\Lxi(0_+; \bsH_2, \bsH_1) = \frac12\Lf(0_+)=\frac12(\Lxi(0_+; A_+,A_-) + \Lxi(0_-; A_+, A_-)).$$

Next, to prove the equality for the semigroup regularised Witten index $W_s(\bsD_\bsA^{})$ we introduce the function
$\Xi(r; \bsH_2,\bsH_1)=\int_{0}^{r}\xi(s;\bsH_2, \bsH_1)\,ds, \quad r >0.$
By Krein's trace formula \eqref{trace_formula_left-hand side} we have that
\begin{align*}
\begin{split}
\frac{1}{t} {\tr}\big(e^{- t \bsH_2} - e^{- t \bsH_1}\big)
= - \int_{0}^{\infty} \xi (s; \bsH_2, \bsH_1)\, e^{-ts}\, ds \\
= - \int_{0}^{\infty}\, e^{-ts} d \, \Xi (s; \bsH_2,\bsH_1).
\end{split}
\end{align*}
We have already established, that $0$ is a right Lebesgue point of $\xi (\, \cdot \,; \bsH_2, \bsH_1)$. Hence, one obtains that
\begin{equation*}
\lim_{r \downarrow 0+} \frac{\Xi (r; \bsH_2,\bsH_1)}{r} = \Xi'(0_+; \bsH_2,\bsH_1) = \Lxi(0_+; \bsH_2,\bsH_1)
\end{equation*}
exists. Then, an Abelian theorem for Laplace transforms \cite[Theorem\ 1, p.\ 181]{Wi41} (with $\gamma=1$) implies that
\begin{align*}
-\lim_{t \to \infty} {\tr}_{\cH} \big(e^{- t\bsH_2} - e^{- t \bsH_1}\big)= \lim_{r \downarrow 0+} \frac{\Xi (r; \bsH_2,\bsH_1)}{r}=\Lxi(0_+; \bsH_2,\bsH_1).
\end{align*}

To prove the equality for the $k$-th resolvent regularisation $W_{k,r}(\bsD_\bsA^{})$ we write 
\begin{align*}
W_{k,r}(\bsD_\bsA^{})&=\lim_{\lambda\uparrow 0} (-\lambda)^k \tr\big( (\bsH_2-\lambda)^{-k}-(\bsH_1-\lambda)^{-k}\big)\\
&=-k\lim_{\lambda\uparrow 0} \int_0^\infty (\nu-\lambda)^{-k-1}\xi(\nu; \bsH_2, \bsH_1)\,d\nu\\
&=\lim_{\lambda\uparrow 0} \big(T_k \xi(\cdot; \bsH_2,\bsH_1)\big)(-\lambda),
\end{align*}
where $T_k$ is the operator introduced in Lemma \ref{lem_limit_is_Lvalue}. Since $0$ is a right Lebesgue point for $\xi(\cdot; \bsH_2,\bsH_1)\in L^1\big((0,\infty);(\nu+1)^{-k-1}d\nu\big)$, $k\geq m$, Lemma~\ref{lem_limit_is_Lvalue} implies the required result:
$$W_{k,r}(\bsD_\bsA^{})=\Lxi(0_+; \bsH_2, \bsH_1)=\frac12(\Lxi(0_+; A_+,A_-) + \Lxi(0_-; A_+, A_-)).$$
\end{proof}

\section{Example of the Dirac operator in $\bbR^d$}\label{ch_examples}

In this section we supplement the abstract discussion by an example for which our general assumption of Hypothesis \ref{hyp_final} holds. Our primary example is the multidimensional Dirac operator and its perturbations given by multiplication operators by matrix valued  functions. Thus, our framework is indeed suitable
for differential operators on certain non-compact manifolds.
%
%
%

%
Throughout this section we fix $d\in\mathbb{N}.$ 
For each $k=1,\ldots,d$, we denote by $\partial_k$ the operators of partial differentiation, that is operators in $L^2(\R^d)$ defined as 
$$\partial_k=-i\frac{\partial}{\partial t_k},\quad \dom(\partial_k)=W^{1,2}(\R^d).$$
We denote the tuple $(\partial_1,\dots, \partial_d)$ by $\nabla.$

 Let $n(d)=2^{\lceil\frac{d}{2}\rceil}$. Let $\gamma_k\in M_{n(d)}(\mathbb{C}),$ $0\leq k\leq d,$ be Clifford algebra generators, that is,
\begin{enumerate}
\item $\gamma_k=\gamma_k^*$ and $\gamma_k^2=1$ for $0\leq k\leq d.$
\item $\gamma_{k_1}\gamma_{k_2}=-\gamma_{k_2}\gamma_{k_1}$ for $0\leq k_1,k_2\leq d,$ such that $k_1\neq k_2.$
\end{enumerate}
We use the notation $\gamma=(\gamma_1,\dots, \gamma_d)$.

\begin{definition}Let $m\geq 0$.
Define the Dirac operator as an unbounded operator $\cD$ acting in the Hilbert space $\mathbb{C}^{n(d)}\otimes L^2(\mathbb{R}^d)$ with domain $\dom(\cD)=\C^{n(d)}\otimes W^{1,2}(\R^d)$ by the formula
\begin{equation}\label{def_D}
\cD=\gamma\cdot \nabla+m\gamma_0.
\end{equation}
\end{definition}

It is well-known that the operator $\cD$ is self-adjoint. Furthermore, 
$\cD^2=-\Delta+m^2,$
where $-\Delta=-\sum_{k=1}^d\frac{\partial^2}{\partial^2 x_k}$  is the Laplace operator.

Suppose that $V=\{\phi_{ij}\}_{i,j=1}^{n(d)}$ is { a hermitian matrix} of functions, such that $\phi_{ij}\in L^\infty(\R^d)$. 
We also write $V$ for the bounded operator of multiplication by this matrix function $V$ on the Hilbert space $\mathbb{C}^{n(d)}\otimes L^2(\mathbb{R}^d)$. We also identify a function $f\in L_\infty(\bbR^d)$ with the operator on $L_2(\bbR^d)$ of multiplication by $f$.

For the example of the current section  we set
$A_-=\cD, \quad B_+=V$
and 
$B(t)=\theta(t) B_+,$
where $\theta$ satisfies \eqref{theta}.
%
Recall, (see \cite{BS_estimates} and \cite{Simon_book}) that the space $l_1(L_2)(\bbR^d)$ is defined as  
$l_1(L^2)(\bbR^d):=\bigg\{f\in L^0(\bbR^d)\,:\,\sum_{n\in\bbZ^d}\|f\chi_{Q+n}\|_{2}<\infty\bigg\},$
where $Q$ denotes the unit cube in $\bbR^d$ centered at $0$. The space $l_1(L_2)(\bbR^d)$ is equipped with the norm defined by 
\begin{equation*}
\|f\|_{l_1(L^2)(\bbR^d)}:=\sum_{n\in\bbZ^d}\|f\chi_{Q+n}\|_{2},\qquad f\in l_1(L^2)(\bbR^d).
\end{equation*}

\begin{proposition}\label{Dirac_hyp_iii}
 Assume that $V=\{\phi_{ij}\}_{i,j=1}^{n(d)}$ is such that 
$\phi_{ij}\in l_1(L^2)(\mathbb{R}^d)$. Then $V$ is a $d$-relative trace-class perturbation with respect to $\cD$, that is Hypothesis \ref{hyp_final_special} (ii) is satisfied. 
\end{proposition}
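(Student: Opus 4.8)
The plan is to reduce the assertion to the classical Birman--Solomyak trace-class criterion for an operator that is a multiplication operator composed with a Fourier multiplier. First I would use the elementary functional-calculus remark recorded after the definition of $p$-relative trace class perturbations to replace the goal $V(\cD+i)^{-d-1}\in\cL_1(\bbC^{n(d)}\otimes L^2(\bbR^d))$ by the equivalent statement $V(\cD^2+1)^{-(d+1)/2}\in\cL_1(\bbC^{n(d)}\otimes L^2(\bbR^d))$. Since $\cD^2=-\Delta+m^2$ acts diagonally on the $\bbC^{n(d)}$-factor, one has $(\cD^2+1)^{-(d+1)/2}=I_{n(d)}\otimes(-\Delta+m^2+1)^{-(d+1)/2}$, where $(-\Delta+m^2+1)^{-(d+1)/2}$ is the Fourier multiplier with symbol $g(\xi)=(1+m^2+|\xi|^2)^{-(d+1)/2}$, $\xi\in\bbR^d$. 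Writing $V(\cD^2+1)^{-(d+1)/2}$ as an $n(d)\times n(d)$ operator matrix with respect to the decomposition $\bbC^{n(d)}\otimes L^2(\bbR^d)\cong\bigoplus_{i=1}^{n(d)}L^2(\bbR^d)$, its $(i,j)$ entry equals $M_{\phi_{ij}}(-\Delta+m^2+1)^{-(d+1)/2}$, so it suffices to show that each of these finitely many entries lies in $\cL_1(L^2(\bbR^d))$.

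Next I would check that the symbol $g$ belongs to $l_1(L^2)(\bbR^d)$: for large $|n|$ and $\xi\in Q+n$ one has $g(\xi)\le C(1+|n|)^{-(d+1)}$, hence $\|g\chi_{Q+n}\|_2\le C'(1+|n|)^{-(d+1)}$, and $\sum_{n\in\bbZ^d}(1+|n|)^{-(d+1)}<\infty$ because $d+1>d$; thus $g\in l_1(L^2)(\bbR^d)$. Combined with the hypothesis $\phi_{ij}\in l_1(L^2)(\bbR^d)$, the Birman--Solomyak estimate (see \cite{BS_estimates} and \cite{Simon_book}), which states that for $f,h\in l_1(L^2)(\bbR^d)$ the operator obtained by composing $M_f$ with the Fourier multiplier of symbol $h$ is trace class with norm bounded by $C_d\|f\|_{l_1(L^2)(\bbR^d)}\|h\|_{l_1(L^2)(\bbR^d)}$, yields $M_{\phi_{ij}}(-\Delta+m^2+1)^{-(d+1)/2}\in\cL_1(L^2(\bbR^d))$ for all $i,j$. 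Summing over the $n(d)^2$ matrix entries gives $V(\cD^2+1)^{-(d+1)/2}\in\cL_1(\bbC^{n(d)}\otimes L^2(\bbR^d))$, equivalently $V(\cD+i)^{-d-1}\in\cL_1(\bbC^{n(d)}\otimes L^2(\bbR^d))$, which is exactly Hypothesis \ref{hyp_final_special}(ii) with $p=d$.

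I do not expect a serious obstacle: the only input doing real work is the Birman--Solomyak criterion, which I would simply quote, while the functional-calculus reduction and the decay estimate for $g$ are entirely routine. (That the exponent $d$ cannot in general be lowered is a separate matter, following from \cite[Remark 4.3]{Simon_book}, and plays no role in the statement to be proved.)
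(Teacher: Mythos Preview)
Your proposal is correct and follows essentially the same route as the paper: reduce to the matrix entries $M_{\phi_{ij}}(-\Delta+m^2+1)^{-(d+1)/2}$ and then invoke the Birman--Solomyak/Simon trace-class criterion (the paper cites \cite[Theorem~4.4]{Simon_book} directly, while you spell out the verification that the symbol $g$ lies in $l_1(L^2)(\bbR^d)$).
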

\begin{proof}
Since $\phi_{ij}\in l_1(L^2)(\bbR^d)$, \cite[Theorem 4.4]{Simon_book} implies that 
the matrix elements $\phi_{ij}{(-\Delta+1)^{-\frac{d+1}{2}}}$ are trace class operators (on $L^2(\bbR^d)$) for all $1\leq i,j\leq n(d)$. Hence, the operator
$$V{(\cD^2+1)^{\frac{-d-1}{2}}}=\Big({\phi_{ij}}{(-\Delta+1)^{\frac{-d-1}{2}}}\Big)_{i,j=1}^{n(d)}$$
is a trace class operator (on $\mathbb{C}^{n(d)}\otimes L^2(\mathbb{R}^d)$). Therefore, $V (\cD+i)^{-1-d}\in \cL_1(\mathbb{C}^{n(d)}\otimes L^2(\mathbb{R}^d))$. 
\end{proof}

Our next task is to  establish a sufficient condition on the matrix $V$ for Hypothesis \ref{hyp_final_special} (iii) to hold.
%
%
Since $\cD^2=-\Delta+m^2$, we have that
$$[\cD^2,V]=\Big([-\Delta, {\phi_{ij}}]\Big)_{i,j=1}^{n(d)},$$
whenever the commutators $[-\Delta, {\phi_{ij}}]$ are well-defined. Therefore, introducing $L_{-\Delta}^j, j\in\bbN,$ as in \eqref{def_L} (with $A^2=-\Delta$), we obtain that 
$V\in\dom(L_{\cD^2}^k)$ for some $k\in\bbN$, provided that ${\phi_{ij}}\in\dom(L_{-\Delta}^k)$ for any $i,j=1,\dots, n(d).$ In this case, 
\begin{equation*}
L_{\cD^2}^k(V)=\overline{(1+\cD^2)^{-k/2}[\cD^2,T]^{(k)}}=\Big(L_{-\Delta+m^2}^k({\phi_{ij}})\Big)_{i,j=1}^{n(d)}.
\end{equation*}


\begin{proposition}\label{dom_delta_Laplace}Let $k\in\bbN$ be fixed. Assume that $\phi\in W^{2k,\infty}(\bbR^n).$ Then $\phi\in\bigcap_{j=1}^k\dom(L_{-\Delta+m^2}^j).$
\end{proposition}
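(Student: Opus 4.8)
The plan is to reduce the statement to a concrete estimate on iterated commutators of $-\Delta$ with a multiplication operator $M_\phi$, and then prove that estimate by induction on $k$. The key observation is the commutator identity: on smooth compactly supported functions,
\[
[-\Delta, M_\phi] = M_{-\Delta\phi} - 2\sum_{l=1}^n M_{\partial_l\phi}\,\partial_l,
\]
so that a single commutator with $-\Delta$ replaces $\phi$ by derivatives of $\phi$ of order $\le 2$, at the cost of introducing a first-order differential operator $\partial_l$ on the right. Iterating, $[-\Delta,M_\phi]^{(k)}$ is a finite sum of terms of the form $M_\psi \cdot \partial^\alpha$, where $\psi$ is a derivative of $\phi$ of order at most $2k$ and $|\alpha|\le k$; crucially $|\alpha|\le k$ while $\psi$ involves at most $2k$ derivatives, which is exactly why the hypothesis $\phi\in W^{2k,\infty}(\bbR^n)$ is the right one. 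One first checks the invariance $M_\phi\dom((-\Delta)^j)\subset \dom((-\Delta)^j)$ for $j=1,\dots,2k$, which follows from $\phi\in W^{2k,\infty}$ since multiplication by a $W^{2j,\infty}$ function maps $W^{2j,2}=\dom((-\Delta)^j)$ into itself (Leibniz rule plus boundedness of all derivatives up to order $2j$).

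The core step is then to show that $(1+(-\Delta))^{-j/2}[-\Delta,M_\phi]^{(j)}$, a priori defined on $\dom((-\Delta)^j)$, extends to a bounded operator for each $j=1,\dots,k$. Using the expansion above, it suffices to bound each term $(1+(-\Delta))^{-j/2} M_\psi\,\partial^\alpha$ with $\psi\in L^\infty$ (a derivative of $\phi$ of order $\le 2j$, hence bounded by hypothesis) and $|\alpha|\le j$. Here one commutes $M_\psi$ past the negative power of $(1+(-\Delta))$, or more simply writes
\[
(1+(-\Delta))^{-j/2} M_\psi\,\partial^\alpha = \big((1+(-\Delta))^{-j/2} M_\psi (1+(-\Delta))^{j/2}\big)\cdot\big((1+(-\Delta))^{-j/2}\partial^\alpha\big),
\]
and notes that the second factor is bounded since $|\alpha|\le j$ (it is a Fourier multiplier with symbol $\xi^\alpha/(1+|\xi|^2)^{j/2}$, which is bounded), while the first factor is bounded by a further, strictly lower-order commutator induction — this is where the inductive hypothesis on smaller values of $j$ enters. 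Alternatively, and more cleanly, one can avoid the auxiliary conjugation and argue directly in Fourier variables term by term: each summand $M_\psi \partial^\alpha$ composed on the left with $(1+(-\Delta))^{-j/2}$ is bounded because $\partial^\alpha(1+(-\Delta))^{-j/2}$ is bounded and $M_\psi$ is bounded, after moving $(1+(-\Delta))^{-j/2}$ through $M_\psi$ modulo lower order terms handled by induction.

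The main obstacle I expect is the bookkeeping in the iterated commutator expansion: keeping track of which derivatives of $\phi$ appear and, in particular, verifying that no term forces more than $j$ total differentiations acting as a genuine differential operator when one applies $(1+(-\Delta))^{-j/2}$ on the left — i.e., making precise that the "excess" derivatives always land on $\phi$ (as bounded multipliers) rather than accumulating as powers of $\partial$. This is a standard pseudodifferential-calculus fact but requires care to state cleanly at the level of unbounded operators and their cores; one should fix $\dom((-\Delta)^k) = W^{2k,2}(\bbR^n)$ as the working domain throughout and use that the relevant identities hold first on the Schwartz class (a core) and then extend by the boundedness just established. Once the boundedness of $(1+(-\Delta)+m^2)^{-j/2}[-\Delta+m^2,M_\phi]^{(j)}$ is in hand for all $j\le k$ (noting $[-\Delta+m^2,\cdot]=[-\Delta,\cdot]$ and that replacing $-\Delta$ by $-\Delta+m^2$ only shifts the resolvent harmlessly), the definition of $L_{-\Delta+m^2}^j$ is met for each $j=1,\dots,k$, giving $\phi\in\bigcap_{j=1}^k\dom(L_{-\Delta+m^2}^j)$ as claimed.
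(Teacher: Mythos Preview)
Your approach is essentially the paper's: verify domain invariance via Leibniz, expand the iterated commutator $[-\Delta,M_\phi]^{(j)}$ explicitly, and bound each piece after applying $(1+m^2-\Delta)^{-j/2}$. The one place where you diverge creates an unnecessary difficulty. You expand each term with derivatives on the \emph{right}, as $M_\psi\,\partial^\alpha$, and are then forced to commute $(1+m^2-\Delta)^{-j/2}$ past $M_\psi$. Neither workaround you sketch is actually carried out: the conjugation $(1-\Delta)^{-j/2}M_\psi(1-\Delta)^{j/2}$ would itself require smoothness of $\psi$, but $\psi$ is already a derivative of $\phi$ of order up to $2j$, so you may have no derivatives of $\psi$ left to spend; and ``moving the resolvent through modulo lower order terms'' is not an induction you have set up.

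The paper sidesteps this completely by writing the single commutator with derivatives on the \emph{left},
\[
[\Delta,M_\phi] \;=\; 2\sum_{l} \partial_l\, M_{\partial_l\phi} \;-\; M_{\Delta\phi},
\]
which is equivalent to your formula after one more Leibniz step. Iterating then yields a sum of terms $\partial^\alpha M_\psi$ with $|\alpha|\le j$ and $\psi\in L^\infty$ (a derivative of $\phi$ of order $\le 2j\le 2k$). Now $(1+m^2-\Delta)^{-j/2}\partial^\alpha$ is a bounded Fourier multiplier and $M_\psi$ is bounded, so each term is bounded with no commutation or auxiliary induction needed. Rewriting your expansion in this order makes the rest of your argument go through cleanly.

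A minor correction on the domain check: you assert invariance of $\dom((-\Delta)^j)=W^{2j,2}$ for $j=1,\dots,2k$, but $\phi\in W^{2k,\infty}$ does not preserve $W^{4k,2}$. What the definition of $L_{-\Delta+m^2}^j$ actually demands (reading $A^2=-\Delta+m^2$) is invariance of $\dom(A^{j'})=W^{j',2}$ for $j'\le 2j\le 2k$, and this is exactly what $\phi\in W^{2k,\infty}$ provides.
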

\begin{proof}Let $k\in\bbN$ be fixed. 
Since $\phi\in W^{2k,\infty}(\bbR^n)$, we have that $(\Delta)^{j}(\phi\xi)\in L^2(\bbR^n)$ for every $\xi\in\dom(\Delta)^{j}$, $j=1,\dots k$. That is $\phi \dom(\Delta)^{j}\subset \dom(\Delta)^{j}$ for all $j=1,\dots, 2k$.

Recall that $\partial_k=\frac{\partial}{i\partial x_j}$ and if $\phi\in L_\infty(\bbR^d)$ with $\frac{\partial \phi}{\partial x_k}\in L_\infty(\bbR^d), \, k=1,\dots, d$, then 
$\phi\dom(\partial_k)\subset\dom(\partial_k)$ and for all $\xi\in\dom(\partial_k)$ we have  
\begin{equation}
\label{commut_with_D_k}
[\partial_k,\phi]\xi=\frac{1}{i}\frac{\partial\phi}{\partial x_k}\xi,\quad k=1,\dots,d.
\end{equation}

 By \eqref{commut_with_D_k} we have 
 \begin{align*}
[\Delta,\phi]&=\sum_{j=1}^n[\partial_j^2,\phi]=\sum_{j=1}^n\partial_j[\partial_j,\phi]+\sum_{j=1}^n[\partial_j,\phi]\partial_j\\
&=\frac1{i}\sum_{j=1}^n \Big(\partial_j{\frac{\partial \phi}{\partial x_j}}+{\frac{\partial\phi}{\partial x_j}}\partial_j\Big)=\frac1{i}\sum_{j=1}^n \Big(2\partial_j{\frac{\partial \phi}{\partial x_j}}-[\partial_j,{\frac{\partial \phi}{\partial x_j}}]\Big)\\
&=\frac2{i}\sum_{j=1}^n \partial_j{\frac{\partial \phi}{\partial x_j}}+\sum_{j,\ell=1}^n{\frac{\partial^2 \phi}{\partial x_j\partial x_\ell}}.
\end{align*}
Therefore,
\begin{align*}
{(1+m^2-\Delta)^{-1/2}}[\Delta,\phi]&=\frac2{i}\sum_{j=1}^n{\partial_j}{(1+m^2-\Delta)^{-1/2}}{\frac{\partial \phi}{\partial x_j}}\\
&\quad+\sum_{j,\ell=1}^n{(1+m^2-\Delta)^{-1/2}}{\frac{\partial^2 \phi}{\partial x_j\partial x_\ell}}.
\end{align*}

Since $\phi\in W^{2k,\infty}(\bbR^n)$, it follows that the operators ${\frac{\partial^2 \phi}{\partial x_j\partial x_\ell}}$ and $M_{\frac{\partial \phi}{\partial x_j}}, j,\ell=1,\dots,n,$ are bounded. Since the operator ${\partial_j}{(1+m^2-\Delta)^{-1/2}}$ is also bounded, we infer that 
$$\overline{{(1+m^2-\Delta)^{-1/2}}[\Delta,\phi]}\in\cB(L^2(\bbR^n)).$$

Continuing this process, we obtain that 
\begin{equation}\label{inc_L_laplace}
\overline{{(1-\Delta)^{-j}}[\Delta,\phi]^{(j)}}\in\cB(L^2(\bbR^n)),\quad j=1,\dots,k,
\end{equation}
that is $\phi\in\bigcap_{j=1}^{2k}\dom(L_{-\Delta}^j).$
\end{proof}

Combining now Propositions \ref{Dirac_hyp_iii} and \ref{dom_delta_Laplace} we arrive at the following 

\begin{theorem}\label{Dirac_hyp_final}
Let $A_-=\cD$ be the Dirac operator on $C^{n(d)}\otimes L^2(\bbR^d)$ defined by \eqref{def_D}, \, $d\in\bbN$. 
 Assume that $V=\{\phi_{ij}\}_{i,j=1}^{n(d)}$ is such that 
$$\phi_{ij}\in l_1(L^2)(\bbR^d)\cap W^{4p,\infty}(\bbR^d),\quad i,j=1,\dots, n(d).$$
Then the operator $A_-=D^{(d)}$ and the perturbation $B_+=V$ satisfy Hypothesis \ref{hyp_final_special} (and hence also Hypothesis \ref{hyp_final}) with $p=d$.
\end{theorem}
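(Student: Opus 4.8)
The plan is to verify that the three conditions of Hypothesis \ref{hyp_final_special} hold for the pair $A_-=\cD$ and $B_+=V$, since Proposition \ref{prop_hyp_special} then yields Hypothesis \ref{hyp_final} automatically. Condition (i) is immediate: $\cD$ is self-adjoint on $\C^{n(d)}\otimes W^{1,2}(\bbR^d)$ (stated after the definition of $\cD$), and $\theta$ satisfies \eqref{theta} by assumption. Condition (ii), namely that $V$ is a $d$-relative trace-class perturbation of $\cD$, is exactly Proposition \ref{Dirac_hyp_iii}, whose hypothesis $\phi_{ij}\in l_1(L^2)(\bbR^d)$ is part of our assumption. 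So the substantive point is condition (iii): $V\in\bigcap_{j=1}^{2p}\dom(L_{\cD^2}^j)$ with $p=d$.

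For condition (iii) I would first record the reduction, already indicated in the text preceding Proposition \ref{dom_delta_Laplace}: since $\cD^2=-\Delta+m^2$ and $V$ acts entrywise, one has $[\cD^2,V]=\big([-\Delta,\phi_{ij}]\big)_{i,j}$ and more generally $L_{\cD^2}^k(V)=\big(L_{-\Delta+m^2}^k(\phi_{ij})\big)_{i,j}$ whenever the entrywise operators are defined and bounded. Hence $V\in\bigcap_{j=1}^{2d}\dom(L_{\cD^2}^j)$ as soon as each $\phi_{ij}\in\bigcap_{j=1}^{2d}\dom(L_{-\Delta+m^2}^j)$. Now I apply Proposition \ref{dom_delta_Laplace} with $k=2d$: it asserts that $\phi\in W^{2k,\infty}(\bbR^d)$ implies $\phi\in\bigcap_{j=1}^{k}\dom(L_{-\Delta+m^2}^j)$. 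Wait — to land in $\bigcap_{j=1}^{2d}$ I need to take $k=2d$ in that proposition, which requires $\phi_{ij}\in W^{4d,\infty}(\bbR^d)$; this matches precisely the hypothesis $\phi_{ij}\in W^{4p,\infty}(\bbR^d)$ with $p=d$ in the statement. So the chain is: $\phi_{ij}\in W^{4d,\infty}\Rightarrow \phi_{ij}\in\bigcap_{j=1}^{2d}\dom(L_{-\Delta+m^2}^j)\Rightarrow V\in\bigcap_{j=1}^{2d}\dom(L_{\cD^2}^j)$, which is Hypothesis \ref{hyp_final_special} (iii) with $p=d$.

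Having checked (i)--(iii), I invoke Proposition \ref{prop_hyp_special} to conclude that Hypothesis \ref{hyp_final} holds for $A_-=\cD$, $B(t)=\theta(t)V$, with $p=d$. I should also double-check that the value of $p$ is consistent: the smallest $p$ for which $B_+(A_-+i)^{-p-1}\in\cL_1$ is at most $d$ by Proposition \ref{Dirac_hyp_iii}, and the theorem only claims that the hypotheses are satisfied "with $p=d$" (taking $p=d$ is legitimate even if a smaller exponent would work, since the trace-class inclusion for exponent $d+1$ implies it for all larger exponents; the Schatten bookkeeping in Section \ref{ch_prelim} is monotone in $p$). The main obstacle is really just matching the Sobolev order $W^{4p,\infty}$ to the number $2p$ of repeated $L_{\cD^2}$-commutators needed in Hypothesis \ref{hyp_final} (iv) via Lemma \ref{dom_delta_and_L} and Proposition \ref{prop_hyp_3} — i.e. keeping track of the factor $2$ between $\delta_{\bsH_0}$-derivations and $\bsL_{\bsH_0}$-derivations, and the factor $2$ between $L_{-\Delta}$-order and differentiation order — but all of this is already packaged into Propositions \ref{prop_hyp_special} and \ref{dom_delta_Laplace}, so the proof is essentially an assembly of the cited results.

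\begin{proof}
We check the three conditions of Hypothesis \ref{hyp_final_special} for $A_-=\cD$ and $B_+=V$; the conclusion then follows from Proposition \ref{prop_hyp_special}.

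Condition (i) holds since $\cD$ is self-adjoint on $\C^{n(d)}\otimes W^{1,2}(\bbR^d)$ and $\theta$ satisfies \eqref{theta} by hypothesis. Condition (ii), that $V$ is a $d$-relative trace-class perturbation of $\cD$, is precisely Proposition \ref{Dirac_hyp_iii}, whose assumption $\phi_{ij}\in l_1(L^2)(\bbR^d)$ is part of the present hypothesis; thus we may take $p=d$.

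It remains to verify condition (iii), namely $V\in\bigcap_{j=1}^{2d}\dom(L_{\cD^2}^j)$. Since $\cD^2=-\Delta+m^2$ and $V$ acts entrywise as the matrix $\{\phi_{ij}\}$, for each $k$ one has, whenever the entrywise operators are defined and bounded,
$$L_{\cD^2}^k(V)=\overline{(1+\cD^2)^{-k/2}[\cD^2,V]^{(k)}}=\Big(L_{-\Delta+m^2}^k(\phi_{ij})\Big)_{i,j=1}^{n(d)}.$$
Hence $V\in\bigcap_{j=1}^{2d}\dom(L_{\cD^2}^j)$ provided $\phi_{ij}\in\bigcap_{j=1}^{2d}\dom(L_{-\Delta+m^2}^j)$ for all $i,j$. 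By assumption $\phi_{ij}\in W^{4p,\infty}(\bbR^d)=W^{4d,\infty}(\bbR^d)$, so Proposition \ref{dom_delta_Laplace} applied with $k=2d$ gives $\phi_{ij}\in\bigcap_{j=1}^{2d}\dom(L_{-\Delta+m^2}^j)$, as needed.

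Thus Hypothesis \ref{hyp_final_special} is satisfied with $p=d$, and Proposition \ref{prop_hyp_special} yields Hypothesis \ref{hyp_final}.
\end{proof}
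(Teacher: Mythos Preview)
Your proof is correct and follows exactly the paper's approach: the paper simply states that the theorem follows by ``combining Propositions \ref{Dirac_hyp_iii} and \ref{dom_delta_Laplace}'', and your argument spells out precisely this combination, using the entrywise reduction $L_{\cD^2}^k(V)=\big(L_{-\Delta+m^2}^k(\phi_{ij})\big)_{i,j}$ recorded just before Proposition \ref{dom_delta_Laplace} and then invoking Proposition \ref{prop_hyp_special}. Your version is more explicit than the paper's one-line justification, but the logic is identical.
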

%
Everywhere below we assume that the  perturbation $V=\{\phi_{ij}\}_{i,j=1}^{n(d)}$ satisfies the assumption of Theorem \ref{Dirac_hyp_final}.
As $\cD^2=-\Delta+m^2$, it follows that the operator $\cD$ has purely absolutely continuous spectrum, which coincides with $(-\infty -m]\cup [m,\infty)$. 
In the case, when $m$ is strictly positive, { the assumption that the operator $V(\cD+i)^{-d-1}$ is compact together with Weyl's theorem guarantees that } the  operator $\cD+V$ has purely discrete spectrum in the interval $(-m,m)$. In particular, if $\cD+V$ is also invertible, then by Theorem \ref{t8.iff} the 
 corresponding operator
 \begin{equation}\label{eq_D_A_in_Dirac}
 \bsD_\bsA=\frac{d}{dt}\otimes 1+1\otimes \cD+ \theta V,
 \end{equation}
(see \eqref{D_A}) is Fredholm. Furthermore, since in this case the spectral shift function $\xi(\cdot; \cD+V,\cD)$ for the pair $(\cD+V, \cD)$ is constant in a neighbourhood of zero, Theorem \ref{thm_WI=ssf} implies that 
$\iindex(\bsD_\bsA)=\xi(0; \cD+V,\cD).$
If the operator $\cD+V$ is not invertible, then the operator $\bsD_\bsA$ is no longer Fredholm. However, since in this case the spectral shift function $\xi(\cdot; \cD+V,\cD)$ is left and right continuous at zero, it follows that $0$ is, in particular, left and right Lebesgue point of $\xi(\cdot;\cD+V,\cD)$. Hence, Theorem \ref{thm_WI=ssf} again implies that 
\begin{equation}
\label{eq_WI_ssf_Dirac_massive}
W_s(\bsD_\bsA)=\frac12[\xi(0+; \cD+V,\cD)+\xi(0-; \cD+V,\cD)].
\end{equation}

In particular, if $d=3$ and the potential $V$ is a magnetic potential, that is 
$$V=\sum_{n=1}^3\gamma_jA_j,\quad  A_j\in L_3(\bbR^3)\cap L_1(\bbR^3),$$
then \cite[Section 5.1]{Safronov} implies that 
$\xi(0; \cD+V,\cD)=0$, and therefore, by \eqref{eq_WI_ssf_Dirac_massive} we obtain that 
$W_s(\bsD_\bsA)=0.$

Our main interest lies in the massless Dirac operator, where $m=0$. In this case, the spectrum of $\cD$ covers the whole real line, and therefore, whatever the potential $V$, the operator $\bsD_\bsA$ in \eqref{eq_D_A_in_Dirac} is never Fredholm. 
However, Theorem \ref{Dirac_hyp_final} guarantees that the pair 
$A_-=\cD,\quad B_+=V,$ both
satisfy Hypothesis \ref{hyp_final}, and therefore, by Theorem \ref{thm_WI=ssf} to study whether the Witten index of $\bsD_\bsA$ exists in this case it is sufficient to study the spectral shift function $\xi(\cdot; \cD+V,\cD)$ for the pair $(\cD+V, \cD)$ and its behaviour near zero. 

Although the spectral shift function for the second order operators is well studied, there is a sparse literature available for the spectral shift function for the first order operators $\cD$ and $\cD+V$ (see e.g. \cite{Safronov, Tiedra_2011, Bruneau_Robert_99}). Furthermore, the majority of papers on this topic study the massive case, $m>0$, and are, therefore, not applicable to our case. Our initial investigation, in collaboration with F. Gesztesy and R. Nichols, shows that for a sufficiently good perturbation $V$, the spectral shift function $\xi(\cdot; \cD+V,\cD)$ is left and right continuous at zero even in the massless case. Hence, the Witten index of the operator $\bsD_\bsA$ exists and 
$W_s(\bsD_\bsA)=\xi(0; \cD+V,\cD).$
However, the proof of left/right continuity of the spectral shift function $\xi(\cdot; \cD+V,\cD)$ at zero involves a generalisation of the original formula for the spectral shift function due to Krein (see \cite{Yafaev_general}) in terms of a perturbation determinant { as well as to an extensive analysis of the spectrum of the perturbed operator $\cD+V$. } The full details of this approach as well as the exact statement of continuity of $\xi(\cdot; \cD+V,\cD)$ involve lengthy arguments
and we defer the discussion to a separate manuscript \cite{CGLNSZ_ssf_cont_zero}.
%
%
%
%
%
%
%
%

\label{Bibliography}
 
\bibliographystyle{amsalpha} 
\bibliography{Bibliography}
\end{document}